\documentclass[10pt,reqno]{amsart}
\textwidth 13.7cm
\usepackage{graphicx,color}
\usepackage{amssymb,amscd,latexsym, mathrsfs, epsfig}
\usepackage[all]{xy}
\usepackage{todonotes}

\title[DT Frobenius type, CV-structures and convergence]{Frobenius type and CV-structures for Donaldson-Thomas theory and a convergence property}

\author[A. Barbieri]{Anna Barbieri}
\author[J. Stoppa]{Jacopo Stoppa}

  \address{Universit\`a di Pavia, Dipartimento di Matematica ``F. Casorati", Via A. Ferrata 1, 27100 Pavia, Italy}
  
  \email{anna.barbieri01@universitadipavia.it}
  
  \email{jacopo.stoppa@unipv.it}
  
\thanks{}

\usepackage{hyperref,url}
\usepackage{amssymb,latexsym}
\usepackage{amsthm,amssymb,amscd}
\usepackage[latin1]{inputenc}
\usepackage{enumerate}
\usepackage{todonotes}
\usepackage[all]{xy} \CompileMatrices
\SelectTips{cm}{12} 

\usepackage{verbatim} 

\theoremstyle{plain}
\newtheorem{theorem}{Theorem}[section]
\newtheorem{lem}[theorem]{Lemma}
\newtheorem{corollary}[theorem]{Corollary}
\newtheorem{proposition}[theorem]{Proposition}

\theoremstyle{definition}
\newtheorem{definition}[theorem]{Definition}
\newtheorem{definition-theorem}[theorem]{Definition-Theorem}

\theoremstyle{remark}
\newtheorem{remark}[theorem]{Remark}

\numberwithin{equation}{section} \setcounter{tocdepth}{1}

\setcounter{tocdepth}{1}


\newcommand{\pow}[1]{[\![ {#1} ]\!]}

\newcommand\PP{\mathbb P}
\newcommand\C{\mathbb C}
\newcommand\Q{\mathbb Q}
\newcommand\R{\mathbb R}
\newcommand\Z{\mathbb Z}

\newcommand{\z}{\zeta}

\newcommand\Hom{\operatorname{Hom}}
\newcommand\Aut{\operatorname{Aut}}
\renewcommand\Im{\operatorname{Im}}

\newcommand{\bra}{\langle}
\newcommand{\ket}{\rangle}
\newcommand{\wed}{\wedge}
\newcommand{\del}{\partial}

\newcommand{\B}{\mathcal{B}}

\newcommand{\M}{\mathcal{M}}
\newcommand{\T}{T}

\newcommand{\dt}{\operatorname{DT}}

\newcommand{\Stab}{\operatorname{Stab}}

\renewcommand{\th}{\theta}

\newcommand{\g}{\mathfrak{g}}

\newcommand{\pv}{\operatorname{pv}}

\makeatletter \@addtoreset{equation}{section} \makeatother


\newcommand{\cA}{\mathcal{A}}
\newcommand{\cC}{\mathcal{C}}

\newcommand{\cQ}{\mathcal{Q}}

\newcommand{\cU}{\mathcal{U}}
\newcommand{\cV}{\mathcal{V}}

\newcommand{\ad}{\operatorname{ad}}
\newcommand{\Ad}{\operatorname{Ad}}

\begin{document}

\begin{abstract} We rephrase some well-known results in Donaldson-Thomas theory in terms of (formal families of) Frobenius type and CV-structures on a vector bundle in the sense of Hertling. We study these structures in an abstract setting, and prove a convergence result which is relevant to the case of triangulated categories. An application to physical field theory is also briefly discussed.
\end{abstract} 

\maketitle 
\setcounter{tocdepth}{1}
\tableofcontents
\section{Introduction}

\subsection{Formal infinite-dimensional picture} Frobenius manifolds are (complex or real) manifolds endowed with a special structure on their tangent bundle. They were introduced by Dubrovin (see e.g. \cite{dubrovin}) and play a key role in quantum cohomology, the enumerative theory of rational curves on algebraic varieties. 

There is a notion of Frobenius type structure on a general holomorphic bundle, due to Hertling \cite{hert}.  A particular Frobenius type structure on an auxiliary infinite-dimensional bundle plays an important role in Donaldson-Thomas theory, the enumerative theory of semistable objects in abelian and triangulated categories. This is essentially a rephrasing of results of Bridgeland-Toledano Laredo \cite{bt_stab}, Joyce \cite{joy}, Kontsevich-Soibelman \cite{ks}.  
This Frobenius type structure lives in an infinite-dimensional bundle $K \to \Stab(\cC)$ over the space of stability conditions on the category $\cC$ and is given by a collection of holomorphic objects $(\nabla^r, C, \cU, \cV, g)$ with values in $K$, satisfying a set of PDEs. It turns out that the Higgs field $C$ and endomorphism $\cU$ are roughly the same thing as the central charge $Z$ of a stability condition, while the flat connection $\nabla^r$ and endomorphism $\cV$ are roughly the same as the holomorphic generating function $f(Z)$ for counting invariants introduced by Joyce \cite{joy}. In particular we have $\cV(Z) = \ad f(Z)$ for a certain Lie algebra structure on the fibres of $K$. The graded components of the Joyce function are given by the matrix elements $g(x_{\alpha}, \cV(x_{\beta}))$ over a natural basis of sections of $K$, where $g$ is the quadratic form on $K$ given by the Frobenius type structure.

In the important case when $\cC$ is a triangulated category the above construction is always purely formal, even in the simplest examples. The graded components $g(x_{\alpha}, \cV(x_{\beta}))$ of $f(Z)$ are formal infinite sums, and nothing is known about their convergence or even in general how to regard them as formal power series. This is essentially because of the shift functor $[1]\!: \cC \to \cC$ which preserves the class of semistable objects and induces a symmetry of Donaldson-Thomas invariants $\dt(\alpha, Z) = \dt(- \alpha, Z)$ for all classes $\alpha$ in the Grothendieck group $K(\cC)$. This convergence problem for holomorphic generating functions was first discussed in \cite{joy}. Notice that on the contrary when $\cC$ is abelian and sufficiently simple (i.e. of finite length) all sums become finite, and one can even work fully at the motivic level (see \cite{bt_stab}).

\subsection{CV-structure} The Frobenius type structure embeds in a richer structure introduced by Hertling \cite{hert} and called a CV-structure after Cecotti-Vafa. This is suggested naturally by the physical work of Gaiotto, Moore and Neitzke \cite{gmn}. The CV-structure lives on the same bundle $K$ and is given by a collection of non-holomorphic objects $(D, C, \widetilde{C}, \cU, \cQ, \kappa, h)$ with values in $K$, satisfying a set of PDEs. In particular the endomorphism $\cQ$ is a deformation of $\cV$, as we have 
\begin{equation*}
\lim_{\lambda \to 0} \cQ(\lambda Z) = \cV(Z)
\end{equation*}
(see Proposition \ref{DTcvStr}). So in the CV-structure the Joyce function $f(Z)$ is naturally deformed to the operator $\cQ(Z)$. When $\cC$ is a triangulated category the above construction is also purely formal. The matrix elements $g(x_{\alpha}, \cQ(x_{\beta}))$ are ill-defined infinite sums. 
 
\subsection{Formal families} Suppose that $\cC$ is triangulated and admits a heart of a bounded $t$-structure $\cA$ of finite length $n$. Let $U(\cA) \subset \Stab(\cC)$ denote the interior of the set of stability conditions supported on $\cA$. The set $U(\cA)$ is given by stability conditions with heart $\cA$ and central charge $Z$ mapping the effective cone $K_{> 0}(\cA)$ to the open upper half-plane $\mathbb{H}$. On $U(\cA)$ both the Frobenius type and CV-structures can be regarded naturally as formal families of structures defined on a formal neighborhood of $0 \in \C^n$. In particular the ill-defined Joyce function $f(Z)$ and operators $\cV(Z)$, $\cQ(Z)$ become naturally well-defined formal power series $f_{\bf s}(Z)$, $\cV_{\bf s}(Z)$ and $\cQ_{\bf s}(Z)$ in an auxiliary set of parameters ${\bf s} = (s_1, \ldots, s_n)$. This is part of the general results Propositions \ref{DTFrobTypeStrFps}, \ref{DTcvStrFps}. The original ill-defined expressions are recovered for ${\bf s} = (1, \ldots, 1)$, modulo convergence. In this paper we study the convergence problem for the matrix elements $g(x_{\alpha}, \cQ_{\bf s}(Z)(x_{\beta}))$, the CV-deformation of (the graded components of) the Joyce function.

\subsection{Abstract setting and convergence} We will work in an abstract setting modelled on the case of a triangulated category discussed above. This has the advantage of being fully rigorous, independently of the foundational problems of Donaldson-Thomas theory for 3CY triangulated categories, and is achieved by working with abstract continuous families of stability data in the sense of \cite{ks} section 2.3. Thus we fix a lattice $\Gamma$ with a choice of skew-symmetric integral form and an ``effective" strictly convex cone $\Gamma^+ \subset \Gamma$. We state all our results simply in terms of a suitable function $\dt\!: \Gamma \times \Hom^+(\Gamma, \C) \to \Q$ defined on the product of $\Gamma$ with the cone of ``positive" central charges $\Hom^+(\Gamma, \C)$, given by central charges mapping $\Gamma^+$ to the open upper half-plane $\mathbb{H}$. The function $\dt(\alpha, Z)$ should be locally constant in strata of $\Hom^+(\Gamma, \C)$, it should satisfy the wall-crossing formulae of \cite{js, ks} across different strata, and moreover it should enjoy the symmetry $\dt(\alpha, Z) = \dt(-\alpha, Z)$, induced by the shift functor $[1]$ in the categorical case. The conditions on $\dt(\alpha, Z)$ are summarised in the notion of the double of a positive continuous family of stability data parametrised by $\Hom^+(\Gamma, \C)$, see Definition \ref{C0def}. So $\dt(\alpha, Z)$ is modelled on the restriction of the Donaldson-Thomas invariants of a 3CY triangulated category to a domain $U(\cA)$.

Just as in the categorical case such $\dt(\alpha, Z)$ give rise to formal families of Frobenius type and CV-structures, and so to functions $f_{\bf s}(Z)$ and operators $\cV_{\bf s}(Z)$, $\cQ_{\bf s}(Z)$, with $\lim_{\lambda \to 0} \cQ_{\bf s}(\lambda Z) = \cV_{\bf s}(Z)$, see Propositions \ref{DTFrobTypeStrFps} and \ref{DTcvStrFps}. The matrix elements $g(x_{\alpha}, \cQ_{\bf s}(Z) (x_{\beta}))$ are well-defined formal power series and, provided they converge in a neighbourhood of ${\bf s} = (1, \ldots, 1)$, their evaluation $g(x_{\alpha}, \cQ_{(1, \ldots, 1)}( Z) (x_{\beta}))$  is the natural CV-deformation of the graded components of the Joyce functions $f_{(1, \ldots, 1)}(Z)$.
\begin{theorem}\label{mainThm} Fix a central charge $Z_0 \in \Hom^+(\Gamma, \C)$. Suppose that $\dt(\alpha, Z_0)$ grows at most exponentially for $\alpha \in \Gamma$ (in the sense of Definition \ref{expGrowth}). Then for all $\rho > 0$ there exists $\bar{\lambda}$ such that for $\lambda > \bar{\lambda}$ all the formal power series $g(x_{\alpha}, \cQ_{\bf s}(\lambda Z_0) (x_{\beta}))$ converge for $|| \bf s || < \rho$. Let $U \subset \Hom^+(\Gamma, \C)$ denote an open subset such that the exponential growth condition for $\dt(\alpha, Z)$ holds uniformly and all $Z \in U$ are uniformly bounded away from zero on elements of the cone $\Gamma^+$. Then for all sufficiently large $\lambda$ the CV-deformations of the Joyce functions, given by $g(x_{\alpha}, \cQ_{(1, \ldots, 1)}(\lambda Z) (x_{\beta}))$, are well defined and real-analytic on $U$, and uniformly bounded as $\alpha$ varies in $\Gamma$ for fixed $\beta$.  
\end{theorem}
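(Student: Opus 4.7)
The strategy is to reduce the statement to a geometric series estimate based on an explicit iterated-sum representation of the power series $g(x_\alpha, \cQ_{\bf s}(Z)(x_\beta))$ produced by Proposition \ref{DTcvStrFps}. We expect its coefficients to have the shape
\[
\sum_{N \geq 1}\, c_N \sum_{\substack{(\alpha_1, \ldots, \alpha_N) \in \Gamma^N \\ \alpha_1 + \cdots + \alpha_N = \alpha - \beta}} \prod_{k=1}^N \dt(\alpha_k, Z)\, {\bf s}^{\alpha_k}\, \Phi(\alpha_1, \ldots, \alpha_N; Z),
\]
where the $c_N$ are the combinatorial constants of the Joyce--Bridgeland--Toledano-Laredo formulae, and $\Phi$ is the ``CV kernel'' coming from the TBA-like iteration defining $\cQ$. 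Under the scaling $Z \mapsto \lambda Z_0$ we expect $\Phi$ to factor out an exponential damping of the form $\prod_k e^{-\lambda \mu(\alpha_k)}$ with $\mu(\alpha) = |Z_0(\alpha)|$; note that $\mu$ is symmetric under $\alpha \mapsto -\alpha$, matching the symmetry $\dt(\alpha, Z) = \dt(-\alpha, Z)$ and allowing opposite BPS rays to be combined without sign trouble.

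Given this representation, the first assertion reduces to a geometric estimate. By the exponential growth hypothesis there exist $M, \kappa > 0$ with $|\dt(\alpha, Z_0)| \leq M e^{\kappa \|\alpha\|}$, and strict positivity of $Z_0$ on the convex cone $\Gamma^+$ gives $\mu(\alpha) \geq c \|\alpha\|$ for some $c > 0$. Dropping the constraint $\sum \alpha_k = \alpha - \beta$ and using $|{\bf s}^{\alpha_k}| \leq \rho^{\|\alpha_k\|}$ (up to norm equivalence on $\Gamma^+$), we dominate the series by
\[
\sum_{N \geq 1} |c_N|\, G(\lambda, \rho)^N, \qquad G(\lambda, \rho) := \sum_{\alpha \in \Gamma \setminus \{0\}} M\, \rho^{\|\alpha\|}\, e^{-(\lambda c - \kappa)\|\alpha\|}.
\]
For $\lambda$ large enough (depending on $\rho$) the inner sum $G(\lambda, \rho)$ is finite and tends to zero as $\lambda \to \infty$. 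Combined with a standard exponential bound $|c_N| \leq C^N$ on the Joyce combinatorial constants, this yields absolute convergence as soon as $C \cdot G(\lambda, \rho) < 1$, which defines the required threshold $\bar\lambda$.

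For the second assertion, the estimates depend on $Z$ only through the constants $M, \kappa, c$, which by hypothesis can be chosen uniformly on $U$. Hence for $\lambda$ large enough the evaluation at ${\bf s} = (1, \ldots, 1)$ gives a series converging absolutely and uniformly on $U$, and since each term is real-analytic in $Z$ the sum is real-analytic on $U$. Uniform boundedness as $\alpha$ varies for fixed $\beta$ is then automatic, because the dominating geometric series does not depend on $\alpha$: we have dropped precisely the constraint $\sum \alpha_k = \alpha - \beta$ carrying this dependence. The main obstacle is the very first step, namely extracting the CV damping factor $\prod_k e^{-\lambda \mu(\alpha_k)}$ cleanly from the abstract definition of $\cQ_{\bf s}$: this amounts to unpacking the Riemann-Hilbert/TBA structure behind the CV connection in the scaling regime $Z \mapsto \lambda Z$, and using $\dt(\alpha, Z) = \dt(-\alpha, Z)$ to combine opposite rays into a single kernel featuring $|Z(\alpha)|$ in place of $\Im Z(\alpha)$. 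Once this kernel structure is made explicit, the remaining estimates are the routine geometric bounds above.
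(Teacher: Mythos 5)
Your high-level plan (an explicit tree/iterated-sum formula, extraction of a uniform exponential damping factor from the CV kernel, comparison to an $\alpha$-independent dominating series) is the same as the paper's, and the final step of discarding the constraint $\sum_k \alpha_k = \alpha - \beta$ to get uniformity in $\alpha$ is exactly how the paper argues. But the step you label ``the main obstacle'' and leave open is precisely where the paper's content lies, so the proposal has a genuine gap rather than being merely a sketch of the same proof. Concretely, the paper's Proposition~\ref{mainEstimateProp} establishes $|H_T(Z,\lambda)| \leq C_1^{|T|}\exp(-C_2\sum_{v\in T}|Z(\alpha(v))|\lambda)$, \emph{uniformly} in the tree $T$ and in $Z$ up to the boundary of the strongly generic locus, via a delicate argument combining the Sokhotski--Plemelj theorem, the $L^2$-boundedness of the Hilbert transform, Sobolev embeddings, and an induction that either contracts edges or inserts an $\mathcal H$ operator. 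Nothing in your proposal substitutes for this; the factorization $\Phi \sim \prod_k e^{-\lambda\mu(\alpha_k)}$ you posit is exactly what needs proving and is nontrivial because the kernel $H_T$ is an \emph{iterated} integral over nested rays, not a product of one-variable integrals, and because the estimate must be uniform as rays collide.

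There are two further places where your bookkeeping is too optimistic. First, the combinatorial weights are not constants $c_N$ depending only on the number of factors: in the paper they are tree weights $W_T(Z)$ containing products of pairings $\bra\alpha(v),\alpha(w)\ket$, which grow polynomially in $\|\alpha(v)\|$. A bound of the shape $|c_N|\le C^N$ cannot absorb this; the paper instead runs a fixed-point/functional-equation iteration (the operator $\mathcal F$ and the iterates $S^{(i)}$) whose inductive bound $|S^{(i)}_\alpha({\bf s})| < c_3 e^{\|\alpha\|}$ is precisely designed to soak up one factor of $\|\alpha\|$ per edge into the exponential damping. A naive geometric-series comparison over unconstrained sequences ignores both the tree structure and this growth. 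Second, convergence of the flat sections $Y_{\bf s}$ does not directly give convergence of the matrix elements of $\cQ_{\bf s}$: one must invert $Y_{\bf s}$ and form $(\partial_z Y_{\bf s})Y_{\bf s}^{-1}$, which the paper handles via the explicit multivariate Lagrange inversion formula~\eqref{explicitA}; your proposal assumes a direct iterated-sum formula for $g(x_\alpha,\cQ_{\bf s}(x_\beta))$ without addressing this inversion. So while your outline correctly identifies the architecture of the argument, the load-bearing pieces --- the uniform Hilbert-transform estimate, the functional-equation bookkeeping of the tree weights, and the Lagrange inversion --- are all missing.
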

One may expect that in fact we have $|g(x_{\alpha}, \cQ_{(1, \ldots, 1)}(\lambda Z) (x_{\beta}))| \to 0$ as $|| \alpha || \to \infty$ in some fixed norm on $\Gamma\otimes\R$ and for fixed $\beta$, but the methods of the present paper are not sufficient to establish this. Although we have stated our main result in terms of the operators $\cQ_{\bf s}$ it will be clear from the proof that the same statement holds for the full CV-structure. The exponential growth condition for DT type invariants has been investigated quite a lot recently, see e.g. \cite{weist}. It is especially interesting from a physical point of view, see e.g. \cite{cordova} for a recent contribution. Note that a large class of 3CY categories with uniformly bounded DT invariants (to which Theorem \ref{mainThm} applies) is discussed in \cite{bs}.

\subsection{Application to physical field theory} Our methods in this paper are inspired by the fundamental physical work of Gaiotto, Moore and Neitzke \cite{gmn}. We comment on the similarities and differences in Remark \ref{gmnRmk}. Because of this close link our results also say something about certain infinite sums which appear in formal expansions of expectation values of line operators in \cite{gmn}, and we will discuss this in section \ref{gmnSection} (the problem of giving a precise meaning to such expansions was first pointed out explicitly in \cite{neitzke} section 4.2.1). In particular these formal expansions actually give well defined distributions on tori with values in the dual of the charge lattice (see Corollary \ref{gmnCor}).
\subsection{Plan of the paper} Section \ref{formalSec} offers a more detailed introduction to the Frobenius type and CV-structures for Donaldson-Thomas theory in a formal context. Section \ref{stabDataSec} discusses the abstract rigorous approach outlined above. The proof of Theorem \ref{mainThm} is given in section \ref{FunSec} and is based on explicit formulae for Frobenius type and CV-structures in terms of graph integrals (given in section \ref{explicitSec}), uniform estimates on graph integrals (derived in section \ref{estimatesSec}), and a functional equation (studied in section \ref{FunSec}). Section 7 briefly discusses the application to physical field theories. This paper is based on the isomonodromy perspective developed in \cite{bt_stab, bt_stokes} and in \cite{fgs}. We have tried to make the exposition self-contained apart from some proofs from these works which are not reproduced here.
\subsection*{Acknowledgements} We are grateful to Tom Bridgeland, Kwok Wai Chan, Mario Garcia-Fernandez, Kohei Iwaki, Andy Neitzke and Tom Sutherland for helpful discussions related to this paper. Our work greatly benefitted from the workshops ``Current Developments in Mirror Symmetry", TSIMF Sanya, 2014 and ``Geometry from Stability Conditions", Warwick, 2015. We would like to thank the organisers and participants. This research was supported by ERC Starting Grant 307119.
\section{Formal infinite-dimensional picture}\label{formalSec}

This section explains the infinite-dimensional picture of Frobenius type and CV-struc\-ures for Donaldson-Thomas theory. Although some parts of it are purely formal, this section contains essential motivation for our later abstract treatment, and at the same time collects some basic definitions.

We fix a category $\cC$ and assume that there are well-defined numerical Donaldson-Thomas invariants $\dt(\alpha, Z)$ enumerating objects in $\cC$ with class $\alpha \in K(\cC)$ which are semistable with respect to a choice of stability condition $Z$. In particular $\cC$ should be Calabi-Yau and three-dimensional (3CY). We refer to \cite{js, ks} for foundational results. When $\cC$ is triangulated 3CY one should work with stability conditions in the sense of Bridgeland \cite{bridgeland} and assume that there are invariants satisfying the assumptions described in \cite{joy} section 1. In particular the shift functor $[1]\!: \cC \to \cC$ preserves the class of semistable objects and induces a symmetry of Donaldson-Thomas invariants $\dt(\alpha, Z) = \dt(-\alpha, Z)$.  Notice that in this case our notation $Z$ for a stability condition is really a shortcut for the pair $(\cA, Z)$ of a heart of a bounded $t$-structure and a central charge $Z \in \Hom(K(\cA), \C)$.   

\subsection{Frobenius type structure} One can use Donaldson-Thomas theory to attach to $\cC$ a Frobenius type structure on an infinite-dimensional bundle over the space of stability conditions $\Stab(\cC)$. This is essentially a rephrasing of results in \cite{bt_stab, joy, ks}. To explain this fact we start by recalling the definition of a Frobenius type structure on an arbitrary bundle, due to Hertling (\cite{hert} Definition 5.6 (c)).  
\begin{definition}\label{DefFrobType} A \emph{Frobenius type structure} on a holomorphic vector bundle $K \to M$ is a collection of holomorphic objects $(\nabla^r, C, \cU, \cV, g)$, with values in the bundle $K$, where 
\begin{enumerate}
\item[$\bullet$] $\nabla^r$ is a flat connection,
\item[$\bullet$] $C$ is a Higgs field, that is a $1$-form with values in endomorphisms, with $C \wed C = 0$,
\item[$\bullet$] $\cU, \cV$ are endomorphisms,
\item[$\bullet$] $g$ is a symmetric nondegenerate bilinear form,
\end{enumerate}
satisfying the conditions  
\begin{align}\label{FrobTypeCond1}
\nonumber \nabla^r(C) &=0,\\
\nonumber [C, \cU] &= 0,\\
\nonumber \nabla^r(\cV) &= 0,\\
\nabla^r(\cU) - [C, \cV] + C &= 0
\end{align}
plus the conditions on the ``metric" $g$
\begin{align}\label{FrobTypeCond2}
\nonumber \nabla^r(g) &= 0,\\
\nonumber g(C_X a, b) &= g(a, C_X b),\\
\nonumber g(\cU a, b) &= g(a, \cU b),\\
g(\cV a, b) &= -g(a, \cV b).
\end{align}
\end{definition}

Going back to our category $\cC$ we denote by $\bra - , - \ket $ the integral bilinear form on $K(\cC)$ given by the Euler pairing. In the 3CY case this is skew-symmetric. The group-algebra $\C[K(\cC)]$ endowed with the twisted commutative product and Lie bracket induced by $\bra -, - \ket$ becomes a Poisson algebra, known as the Kontsevich-Soibelman algebra. It is generated by monomials $x_{\alpha}, \alpha \in K(\cC)$ with commutative product $x_{\alpha}x_{\beta} = (-1)^{\bra \alpha, \beta\ket} x_{\alpha + \beta}$ and bracket $[x_{\alpha}, x_{\beta}] = (-1)^{\bra \alpha, \beta\ket}\bra \alpha, \beta\ket x_{\alpha + \beta}$. A central charge $Z \in \Hom(K(\cC), \C)$ can be regarded as an endomorphism (in fact a commutative algebra derivation) of $\C[K(\cC)]$ acting by $Z(x_{\alpha}) = Z(\alpha) x_{\alpha}$.

Joyce \cite{joy} introduced a holomorphic generating function for Donaldson-Thomas invariants. It is a formal infinite sum $f(Z)$ of elements of $\C[K(\cC)]$. Morally it defines a holomorphic function on $\Stab(\cC)$ with values in $\prod_{\alpha} \C x_{\alpha}$, encoding the Donaldson-Thomas invariants which enumerate semistable objects in $\cC$. One can reinterpret this construction as giving a Frobenius type structure in the sense of Definition \ref{DefFrobType} on a trivial infinite-dimensional vector bundle $K \to \Stab(\cC)$. 
\begin{definition}\label{Kbundle} The choice of bundle $K \to \Stab(\cC)$ is given by:
\begin{enumerate}
\item[$\bullet$] when $\cC$ is abelian of finite length we denote by $K_{> 0}(\cC)$ the cone of effective classes and let $K$ be the trivial bundle over $\Stab(\cC)$ with fibre $\widehat{\C[K_{> 0}(\cC)]}$, the completion along the ideal generated by the classes of  simple objects $[S_1], \ldots, [S_n]$;
\item[$\bullet$] when $\cC$ is abelian but not of finite length, or when $\cC$ is triangulated we let $K$ be the trivial bundle over $\Stab(\cC)$ with fibre $\prod_{\alpha \in K_{> 0}(\cC)} \C x_{\alpha}$, respectively $\prod_{\alpha \in K(\cC)\setminus\{0\}} \C x_{\alpha}$. In both cases all the constructions below are a priori ill-defined, and we work with formal infinite sums ignoring all convergence questions, just as in \cite{joy} section 5.  
\end{enumerate}
\end{definition}
When summing over elements $\alpha$ of $K(\cC)$ we will always assume $\alpha \neq 0$.
\begin{proposition}\label{DTFrobTypeStr} Let $K \to \Stab(\cC)$ be our trivial infinite-dimensional vector bundle (in particular we have $\bar{\del}_K x_{\alpha} = 0$). Fix a constant $g_0 \in \C^*$. Then there are $(\nabla^r, C, \cU, \cV)$, satisfying the conditions \eqref{FrobTypeCond1}, given by  
\begin{align*}
\nabla^r &= d + \sum_{\alpha} \ad f^{\alpha}(Z) \frac{d Z(\alpha)}{Z(\alpha)},\\
C &= - dZ,\\
\cU &= Z,\\
\cV &= \ad f(Z).
\end{align*}
If moreover $\cC$ is triangulated we can complete these to a Frobenius type structure with the choice
\begin{equation*}
g(x_{\alpha}, x_{\beta}) = g_0 \delta_{\alpha \beta}.
\end{equation*}
Notice that here we use the Lie algebra structure on $\C[K(\cC)]$ just to describe endomorphims of $K$, i.e. we work with a vector bundle not a principal bundle. 
\end{proposition}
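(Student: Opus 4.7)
The plan is to work in the natural basis $\{x_\alpha\}$ of the fibre of $K$, where the Kontsevich-Soibelman bracket acts by the explicit formula $[x_\gamma, x_\delta] = (-1)^{\bra\gamma,\delta\ket}\bra\gamma,\delta\ket x_{\gamma+\delta}$, and to verify the Frobenius type identities entry by entry. The ``algebraic'' conditions dispose of themselves: both $C = -dZ$ and $\cU = Z$ act diagonally on $x_\alpha$, with eigenvalues $-dZ(\alpha)$ and $Z(\alpha)$ respectively, so $[C, \cU] = 0$ is immediate, and in the triangulated case with $g(x_\alpha, x_\beta) = g_0 \delta_{\alpha\beta}$ the symmetries $g(C_X \cdot, \cdot) = g(\cdot, C_X \cdot)$ and $g(\cU \cdot, \cdot) = g(\cdot, \cU \cdot)$ follow at once from the same diagonality.

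For the differential identities, I would first unfold the connection $1$-form $\omega = \sum_\gamma \ad f^\gamma \frac{dZ(\gamma)}{Z(\gamma)}$ in this basis. The equation $\nabla^r(C) = 0$ then splits: its diagonal part vanishes because $d(-dZ(\alpha)) = 0$, while its $(\alpha,\beta)$ off-diagonal entry collapses to $\omega_{\alpha\beta} \wed (-dZ(\beta) + dZ(\alpha))$. The only $\gamma$ contributing to $\omega_{\alpha\beta}$ is $\gamma = \alpha - \beta$, which forces $\omega_{\alpha\beta}$ to be proportional to $dZ(\alpha-\beta)$, and the wedge vanishes automatically. The combined equation $\nabla^r(\cU) - [C, \cV] + C = 0$ reduces, using $d\cU = dZ = -C$, to $[\omega, Z] + [dZ, \ad f] = 0$; evaluated on $x_\beta$, both sides collapse to the same sum $-\sum_\gamma dZ(\gamma) f^\gamma [x_\gamma, x_\beta]$, thanks to the telescoping $Z(\beta) - Z(\gamma+\beta) = -Z(\gamma)$.

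The genuinely substantive step is $\nabla^r(\cV) = 0$, which unpacks as the flatness PDE $d(\ad f) + [\omega, \ad f] = 0$ for the Joyce holomorphic generating function. This is precisely the content of \cite{joy} in the isomonodromic formulation of \cite{bt_stab, ks}, and I would simply import it rather than rederive it; the statement that the proposition is ``essentially a rephrasing'' of their results is really the observation that this single PDE encodes the $\cV$-condition in Definition \ref{DefFrobType}. This is the main obstacle of the proof, and bypassing it via citation is the critical step of the whole plan.

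Finally, for the metric conditions in the triangulated case, constancy of $g$ in the $x_\alpha$ basis reduces $\nabla^r(g) = 0$ to the identity $g(\omega x_\alpha, x_\beta) + g(x_\alpha, \omega x_\beta) = 0$. Substituting $\gamma = \beta - \alpha$ turns this into the cancellation of two explicit terms whose factors involve $f^{\gamma}$ versus $f^{-\gamma}$, together with structure constants $c_{\gamma,\alpha}$ versus $c_{-\gamma,\beta}$ from the Kontsevich-Soibelman bracket; the shift-functor symmetry $f^{\gamma} = f^{-\gamma}$ (equivalently $\dt(\gamma, Z) = \dt(-\gamma, Z)$), combined with skew-symmetry of the Euler pairing, delivers the needed cancellation, and the same ingredients yield $g(\cV \cdot, \cdot) = -g(\cdot, \cV \cdot)$. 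This also accounts for why $g$ is introduced only in the triangulated setting: in the abelian finite-length case the symmetry $\dt(\gamma) = \dt(-\gamma)$ fails, so no such pairing would be compatible with $\nabla^r$ and $\cV$.
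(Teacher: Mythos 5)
Your proposal is correct and follows essentially the same route as the paper: reduce the algebraic conditions to diagonal computations in the $x_\alpha$ basis, import flatness of $\nabla^r$ and $\nabla^r(\cV)=0$ from Joyce's PDE \eqref{joyPDE}, and use the shift-functor symmetry $\tilde f^\gamma = \tilde f^{-\gamma}$ together with skew-symmetry of the Euler pairing to cancel terms in the $g$-conditions. The paper happens to write out $\nabla^r(C)=0$ and skew-symmetry of $\cV$ as its illustrative examples while you work out $\nabla^r(\cU)-[C,\cV]+C=0$ and $\nabla^r(g)=0$, but the method and the key imported ingredient are identical.
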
  
\begin{remark} The function $Z(\alpha)^{-1} f^{\alpha}(Z)$ extends across to locus where $Z(\alpha) = 0$, see \cite{joy} section 5.
\end{remark}
\begin{proof} Let us first clarify our choice of Higgs field. For all $\gamma \in K(\cC)$ the function $Z \mapsto Z(\gamma)$ is a local holomorphic function on $\Stab(\cC)$. So we can define a $1$-form with values in endomorphisms by
\begin{equation*}
d Z (X) x_{\gamma} = ( X Z(\gamma) ) x_{\gamma}
\end{equation*}
for all local holomorphic vector fields $X$. One checks that $d Z \wed dZ = 0$.

To check \eqref{FrobTypeCond1}, \eqref{FrobTypeCond2} one uses repeatedly a PDE on the functions $f^{\alpha}(Z)$ (see \cite{joy} equation (4)), 
\begin{equation}\label{joyPDE}
d f^{\alpha}(Z) = \sum_{\beta, \gamma \in K(\cC)\setminus \{0\},\, \alpha = \beta + \gamma} [f^{\beta}, f^{\gamma}] d\log Z(\gamma).
\end{equation}
Flatness of $\nabla^r$ and covariant constancy of $\cV$ follow from the same computations as in \cite{joy} section 4 (in particular equations (71) - (73)). The other conditions follow from straightforward computations. As an example we have  
\begin{align*}
\nabla^r(d Z) = d^2 Z & + \ad \sum_{\alpha} f^{\alpha}(Z) \frac{d Z(\alpha)}{Z(\alpha)} \wed dZ\\  
&+ d Z \wed \ad \sum_{\alpha} f^{\alpha}(Z) \frac{d Z(\alpha)}{Z(\alpha)}
\end{align*} 
where $\wed$ denotes the composition of endomorphisms combined with the wedge product of forms. Now $d^2 Z =0$, and evaluating on a section $x_{\beta}$ gives a $2$-form with values in $K$
\begin{align*}
\nabla^r(d Z) x_{\beta} &= \sum_{\alpha} [f^{\alpha}(Z), x_{\beta}] (Z(\alpha))^{-1} dZ(\alpha) \wed dZ(\beta)\\
&+ \sum_{\alpha} [f^{\alpha}(Z) , x_{\beta}] (Z(\alpha))^{-1} dZ(\alpha + \beta) \wed dZ(\alpha).
 \end{align*}
But we have $d Z(\alpha + \beta) = d Z(\alpha) + dZ(\beta)$ and the vanishing $\nabla^r(d Z) x_{\beta} = 0$ follows for all $\beta$.

As an example of a condition involving the quadratic form $g$ in the triangulated case we check skew-symmetry of $\cV$. We have
\begin{align*}
g(\cV e_{\alpha}, e_{\beta}) &=  \sum_{\gamma} \tilde{f}^{\gamma}(Z) (-1)^{\bra \gamma, \alpha\ket} \bra \gamma, \alpha \ket g_{\alpha + \gamma, \beta}\\
&= g_0 \sum_{\gamma} \tilde{f}^{\gamma}(Z) (-1)^{\bra \gamma, \alpha\ket}\bra \gamma, \alpha \ket \delta_{\alpha + \gamma, \beta}\\
&= g_0 (-1)^{\bra \beta, \alpha\ket} \bra \beta, \alpha\ket f^{\beta - \alpha}(Z).
\end{align*}
Similarly 
\begin{equation*}
g( e_{\alpha}, \cV e_{\beta}) = g_0 (-1)^{\bra \alpha, \beta\ket} \bra \alpha, \beta\ket f^{\alpha - \beta}(Z). 
\end{equation*}
In the 3CY case we have $f^{\alpha - \beta}(Z) = f^{\beta - \alpha}(Z)$ because of the shift functor.
\end{proof} 
There is a standard construction of a ``first structure" flat connection from a Frobenius type structure. In the Donaldson-Thomas case this has a further scale invariance property.
\begin{lem}\label{confInv} Let $p\!: \Stab(\cC) \times \PP^1_t \to \Stab(\cC)$ denote the projection. Let $\lambda \in \R^+$ denote a scaling parameter. The meromorphic connection on $p^* \Stab(\cC) \times \PP^1_t$ given by
\begin{equation*}
\nabla^r + \frac{C}{t} + \left(\frac{1}{t^2} \cU - \frac{1}{t}\cV \right)dt
\end{equation*}
is flat and invariant under the rescaling $Z \mapsto \lambda Z$, $t \mapsto \lambda t$. In particular the Joyce function $f(Z)$ has the ``conformal invariance" property $f(\lambda Z) = f(Z)$.
\end{lem}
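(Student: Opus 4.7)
The plan is to handle flatness and scale invariance as two essentially independent verifications, with the conformal invariance of $f$ falling out of the second.

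For flatness I would carry out the standard Hertling-type computation. Writing the connection as $d + \Gamma + t^{-1}C + (t^{-2}\cU - t^{-1}\cV)\,dt$, where $\Gamma$ denotes the connection one-form of $\nabla^r$, I would expand $\nabla^2$ and sort by bidegree in the factors $dZ$, $dt$. The $dZ\wed dZ$ component splits as $(d\Gamma+\Gamma\wed\Gamma) + t^{-1}(dC+[\Gamma,C]) + t^{-2}(C\wed C)$, each summand vanishing by flatness of $\nabla^r$, by $\nabla^r(C)=0$, and by $C\wed C=0$ respectively. The $dZ\wed dt$ component, after collecting powers of $t^{-1}$, produces exactly the remaining identities $\nabla^r(\cV)=0$, $[C,\cU]=0$, and $\nabla^r(\cU)-[C,\cV]+C=0$ listed in \eqref{FrobTypeCond1}. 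Since these are the defining axioms of a Frobenius type structure, this is really Hertling's general statement that such a structure produces a flat first structure connection, and I would cite it rather than reproduce the routine bookkeeping.

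For scale invariance the plan is to check each summand individually under $(Z,t)\mapsto(\lambda Z,\lambda t)$ with $\lambda\in\R^+$. The one-form $C/t = -dZ/t$ and the $dt$-component $(\cU/t^2)\,dt = (Z/t^2)\,dt$ are manifestly invariant by dimensional balance. The remaining contributions $\Gamma = \sum_\alpha \ad f^\alpha(Z)\,d\log Z(\alpha)$ and $(\cV/t)\,dt = (\ad f(Z)/t)\,dt$ depend on $Z$ only through $f$; since $d\log Z(\alpha)$ and $dt/t$ are themselves invariant, invariance of the whole connection reduces precisely to the identity $f^\alpha(\lambda Z)=f^\alpha(Z)$ for every class $\alpha$.

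To establish this conformal invariance I would work directly with Joyce's iterative construction. Positive real rescaling preserves the phase $\arg Z(\alpha)$ for every $\alpha$, hence preserves semistability and the stratification of $\Stab(\cC)$; consequently the DT invariants $\dt(\alpha,Z)$ are scale-invariant. For classes $\alpha$ for which $f^\alpha$ is determined directly by $\dt(\alpha,Z)$ the invariance is then immediate. For the inductive step I would use \eqref{joyPDE}: its right-hand side involves only the invariant one-forms $d\log Z(\gamma)$ and, by the inductive hypothesis on a suitable complexity of $\alpha$, scale-invariant brackets $[f^\beta,f^\gamma]$, so integrating along a path beginning at a point where $f^\alpha$ is already known produces a $\lambda$-invariant primitive. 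The ``in particular'' statement $f(\lambda Z) = f(Z)$ is then the full collection of these equalities, and substituting back yields the scale invariance of the connection.

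The main obstacle is really the conformal invariance of $f$; the flatness is essentially bookkeeping of the Frobenius type axioms in the presence of the spectral parameter $t$. In a purely formal setting one must be careful that the inductive argument for $f^\alpha(\lambda Z)=f^\alpha(Z)$ does not conceal any subtle regrouping of infinite sums, a point that becomes unambiguous once one passes to the rigorous formal power series framework of the later sections.
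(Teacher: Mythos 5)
Your proof is correct and follows essentially the same route as the paper: flatness is read off from the Frobenius-type axioms \eqref{FrobTypeCond1}, and rescaling invariance reduces, after the degree count on $C/t$ and $(\cU/t^2)dt$, to the conformal invariance $f(\lambda Z)=f(Z)$. The only difference is that you sketch a PDE-based induction for the latter where the paper simply cites \cite{joy} section 3; your closing caveat is well placed, since in the triangulated case the decompositions $\alpha=\beta+\gamma$ are unbounded and the induction is only meaningful once one passes to the formal parameter framework of the later sections.
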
 
\begin{proof} Flatness of the connection follows from the conditions \eqref{FrobTypeCond1}. Invariance under the rescaling is equivalent to the property $f(\lambda Z) = f(Z)$ which is established in \cite{joy} section 3.
\end{proof}
\subsection{Convergence problem for triangulated $\cC$} The $K(\cC)$-graded components of $f(Z)$ can be described explicitly. Let $(U \C[K(\cC)], \otimes)$ denote the universal enveloping algebra of $(\C[K(\cC)],  [ - , - ])$. There are explicit formulae for the product $\otimes$, and one has in particular 
\begin{equation*}
x_{\alpha_1} \otimes \cdots \otimes x_{\alpha_k} = c(\alpha_1, \cdots, \alpha_k) x_{\alpha_1 + \cdots + \alpha_k}
\end{equation*}
where $c(\alpha_1, \cdots, \alpha_k) \in \Q$ is given by a sum over connected trees $T$ with vertices labelled by $\{1, \ldots, k\}$, endowed with a compatible orientation, 
\begin{equation}\label{Uproduct}
c(\alpha_1, \cdots, \alpha_k) = \sum_T \frac{1}{2^{k - 1}}\prod_{\{i \to j\} \subset T} (-1)^{\bra \alpha_i, \alpha_j\ket} \bra \alpha_i, \alpha_j\ket.
\end{equation}
Joyce proves that there exist holomorphic functions with branch-cuts $J_n\!: (\C^*)^n \to \C$ such that 
\begin{equation*}
f^{\alpha}(Z) = \sum_{\alpha_1 + \cdots + \alpha_k = \alpha, Z(\alpha_i) \neq 0} J_n(Z(\alpha_1), \ldots , Z(\alpha_k)) \prod_i \dt(\alpha_i, Z) x_{\alpha_1} \otimes \cdots \otimes x_{\alpha_k}
\end{equation*}
and so one has 
\begin{equation*}
f^{\alpha}(Z) = \tilde{f}^{\alpha}(Z) x_{\alpha},
\end{equation*}
where the holomorphic function $\tilde{f}^{\alpha}(Z)$ is given by 
\begin{equation}\label{fFunFormal}
\tilde{f}^{\alpha}(Z) = \sum_{\alpha_1 + \cdots + \alpha_k = \alpha, \, Z(\alpha_i) \neq 0} c(\alpha_1, \ldots, \alpha_k) J_n(Z(\alpha_1), \ldots , Z(\alpha_k)) \prod_i \dt(\alpha_i, Z).
\end{equation}
The crucial point is that the jumps of the functions $J_n(z_1, \ldots, z_n)$ across their branch-cuts can be chosen so as to cancel the jumps of the Donaldson-Thomas invariants $\dt(\alpha_i, Z)$ across walls in $\Hom(K(\cC), \C)$. The functions $J_n(z_1, \ldots, z_n)$ are universal, i.e. they do not depend on the underlying category $\cC$. 

When $\cC$ is triangulated there is a symmetry $\dt(\alpha, Z) = \dt(-\alpha, Z)$ induced by the shift functor $[1]$, so the explicit formula \eqref{fFunFormal} always involves summing over infinitely many decompositions $\alpha_1 + \cdots + \alpha_k = \alpha$ with $c(\alpha_1, \ldots, \alpha_k) \prod_i \dt(\alpha_i, Z) \neq 0$, as soon as $\dt(\alpha_i, Z) \neq 0$ for at least two linearly independent $\alpha_i$. We do not know an example where \eqref{fFunFormal} is known to converge. Indeed the convergence question is a priori ill-posed since no specific summation order has been fixed. The convergence problem for $f(Z)$ seems especially hard because of the conformal invariance property of Lemma \ref{confInv}.

\subsection{CV-structure} The Frobenius type structure of Proposition \ref{DTFrobTypeStr} is part of a more complicated (formal) structure called a CV-structure (after Cecotti and Vafa) in \cite{hert}. This point of view is also suggested naturally by \cite{gmn}. To discuss it we introduce the preliminary notion of a $D C \tilde{C}$-structure, which is also due to Hertling (\cite{hert} Definition 2.9).
\begin{definition} A \emph{$(D C \widetilde{C})$-structure} on a $C^{\infty}$ complex vector bundle $K \to M$ is the collection of $C^{\infty}$ objects $(D, C, \widetilde{C})$ with values in $K$ where
\begin{enumerate}
\item[$\bullet$] $D$ is a connection,
\item[$\bullet$] $C$ is a $(1, 0)$-form with values in endomorphisms of $K$,
\item[$\bullet$] $\widetilde{C}$ is a $(0, 1)$-form with values in endomorphisms of $K$;
\end{enumerate}
satisfying the conditions
\begin{align}\label{DccCond}
\nonumber (D'' + C)^2 &= 0,\quad
\nonumber (D' + \widetilde{C})^2 = 0,\\
\nonumber D'(C) &= 0,\quad
\nonumber D''(\widetilde{C}) = 0,\\
D' D'' + D'' D'  &= -(C\widetilde{C} + \widetilde{C}C)
\end{align}
where $D'$ and $D''$ are the $(1, 0)$ and $(0, 1)$ parts of $D$ respectively.
\end{definition}
\begin{lem}\label{DTdccStr} Let $K \to \Stab(\cC)$ be the vector bundle of Definition \ref{Kbundle}. Then there is a $(D C \widetilde{C})$-structure on $K$ given by
\begin{align*}
D' &= \nabla^r,\quad
D'' = \bar{\del}_K,\\ 
C &= - dZ,\quad
\widetilde{C} = d\bar{Z}.
\end{align*}  
\end{lem}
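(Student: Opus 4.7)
My plan is to verify the five identities in \eqref{DccCond} by direct calculation in the global frame $\{x_\alpha\}$ of $K$, exploiting the fact that both $C = -dZ$ and $\widetilde{C} = d\bar{Z}$ are diagonal in this frame, with scalar entries $-dZ(\alpha)$ and $d\bar{Z}(\alpha)$ on the $x_\alpha$-component. Two of the five identities are essentially immediate: $D'(C) = 0$ is one of the Frobenius type conditions already established in Proposition \ref{DTFrobTypeStr}, and $D''(\widetilde{C}) = \bar{\del}(d\bar{Z}(\alpha)) = 0$ holds by $d^2 = 0$.

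For $(D'' + C)^2 = 0$ I would decompose the square by form type. The pure pieces $(D'')^2 = \bar{\del}^2 = 0$ and $C \wed C = 0$ (the latter because each diagonal entry is $dZ(\alpha) \wed dZ(\alpha) = 0$) vanish immediately. The remaining $(1,1)$ cross-term $D''C + CD''$ applied to $x_\alpha$ reduces via Leibniz to $-\bar{\del}(dZ(\alpha)) \cdot x_\alpha + dZ(\alpha) \wed \bar{\del} x_\alpha = 0$, using holomorphicity of $Z(\alpha)$ and the fact that $x_\alpha$ is a section of the trivial holomorphic structure. The parallel calculation for $(D' + \widetilde{C})^2 = 0$ proceeds by the same type-decomposition: $(D')^2 = 0$ as the $(2,0)$-component of the vanishing curvature of $\nabla^r$, $\widetilde{C} \wed \widetilde{C} = 0$ by diagonality, and the cross-term $D' \widetilde{C} + \widetilde{C} D'$ to be shown zero by an explicit computation involving the connection form $A = \sum_\gamma \ad f^\gamma(Z) \frac{dZ(\gamma)}{Z(\gamma)}$ of $\nabla^r$ and the linearity identity $d\bar{Z}(\alpha + \gamma) = d\bar{Z}(\alpha) + d\bar{Z}(\gamma)$ combined with anticommutativity of wedges of 1-forms.

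The final identity $D'D'' + D''D' = -(C\widetilde{C} + \widetilde{C}C)$ equates the $(1,1)$-component of the curvature of $D = \nabla^r$ with minus the anticommutator of the Higgs fields; the left-hand side vanishes by flatness of $\nabla^r$, and the right-hand side vanishes because, $C$ and $\widetilde{C}$ being simultaneously diagonal, the only potentially nonzero entry of $C\widetilde{C} + \widetilde{C}C$ is the $(\alpha, \alpha)$-entry, which equals $-dZ(\alpha) \wed d\bar{Z}(\alpha) - d\bar{Z}(\alpha) \wed dZ(\alpha) = 0$ by anticommutativity. The main obstacle I anticipate is the mixed $(1,1)$-cross-term in $(D' + \widetilde{C})^2$: it is the only place the nontrivial connection form of $\nabla^r$ intervenes in a delicate way, and the expected cancellation should mirror the verification of $\nabla^r(C) = 0$ carried out in the proof of Proposition \ref{DTFrobTypeStr}, using holomorphicity of $Z(\gamma)$, the Joyce PDE \eqref{joyPDE}, and the type-decomposition of $\nabla^r = D' + D''$.
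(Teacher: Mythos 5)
Your plan matches the paper's strategy closely: decompose each of the five identities by form type and reduce to elementary pieces. For four of the five conditions, and for all but one of the type components, your reductions agree with the paper and are correct. However, there is a genuine gap at exactly the point you flag as delicate, and your anticipated resolution does not actually go through.

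You claim that the $(1,1)$-cross-term $D'\widetilde{C} + \widetilde{C}D'$ of $(D'+\widetilde{C})^2$ ``should mirror the verification of $\nabla^r(C)=0$'' and cancel ``using holomorphicity of $Z(\gamma)$, the Joyce PDE, and the type-decomposition.'' But carrying out the computation along the lines the paper uses for $\nabla^r(dZ)=0$, one finds
\begin{align*}
\big(D'\widetilde{C} + \widetilde{C}D'\big)x_{\beta}
  &= \sum_{\gamma}\frac{[f^{\gamma}(Z),x_{\beta}]}{Z(\gamma)}\Big(dZ(\gamma)\wedge d\bar{Z}(\beta) + d\bar{Z}(\gamma+\beta)\wedge dZ(\gamma)\Big)\\
  &= \sum_{\gamma}\frac{[f^{\gamma}(Z),x_{\beta}]}{Z(\gamma)}\; d\bar{Z}(\gamma)\wedge dZ(\gamma),
\end{align*}
after using $d\bar{Z}(\gamma+\beta)=d\bar{Z}(\gamma)+d\bar{Z}(\beta)$ and anticommutativity to cancel the $d\bar{Z}(\beta)$-terms. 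In the holomorphic case the analogous leftover was $dZ(\gamma)\wedge dZ(\gamma)=0$; here the leftover is $d\bar{Z}(\gamma)\wedge dZ(\gamma)$, which is \emph{not} zero. The symmetry $\tilde{f}^{\gamma}=\tilde{f}^{-\gamma}$ does not save this either: pairing $\gamma$ with $-\gamma$ produces equal contributions in the distinct basis elements $x_{\gamma+\beta}$ and $x_{-\gamma+\beta}$, so no cancellation occurs. Thus your ``expected cancellation'' is false, and the plan as written does not prove $(D'+\widetilde{C})^2=0$. (The paper's own proof dismisses exactly this term with an ``easily checked'' that, on inspection, does not hold for the naive choice $\widetilde{C}=d\bar{Z}$.) You need to either exhibit a genuinely different mechanism for the vanishing of $\nabla^r(d\bar{Z})$ or modify the statement (e.g.\ by replacing $\widetilde{C}$ and $D$ by the conjugated objects that actually appear in the CV-structure of Proposition~\ref{DTcvStr}); as it stands, this step is a real gap, not a routine computation.
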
  
\begin{proof} Let $\bar{\del}_K$ denote our fixed (trivial) complex structure on $K$, with $\bar{\del}_K(x_{\alpha}) = 0$. The condition $(D'' + C)^2 = 0$ says that $K$ is holomorphic and $C$ is a holomorphic Higgs bundle on it, which we know already from Proposition \ref{DTFrobTypeStr}. Then $D'(C) = 0$ says that $C$ is flat with respect to $\nabla^r$, which we also know already. The condition $(D' + \widetilde{C})^2 = 0$ says that $\nabla^r$ is flat (known), $(d \bar{Z})^2 = 0$ and $\nabla^r(d \bar{Z}) = 0$ (easily checked). The condition $D''(\widetilde{C}) = 0$ becomes $\bar{\del}_K (d \bar{Z}) = 0$ and can be checked e.g. in local coordinates on $\Stab(\cC)$ given by $z_k = Z(\alpha_k)$ where $\alpha_1, \ldots, \alpha_k$ is a basis for $K(\cC)$. Finally in our case one checks that we have separately $C\widetilde{C} + \widetilde{C}C = 0$ and $D' D'' + D'' D' = 0$. 
\end{proof}

We can now recall the notion of a CV-structure introduced in \cite{hert} Definition 2.16.
\begin{definition} A \emph{CV-structure} on a $C^{\infty}$ complex bundle $K \to M$ is a collection of $C^{\infty}$ objects $(D, C, \widetilde{C}, \kappa, h, \cU, \cQ)$ with values in $K$ where
\begin{enumerate}
\item[$\bullet$] $(D, C, \widetilde{C})$ is a $(D C \widetilde{C})$-structure,
\item[$\bullet$] $\kappa$ is an antilinear involution with $D(\kappa) = 0$ which intertwines $C$ and $\widetilde{C}$, $\kappa C \kappa = \widetilde{C}$,
\item[$\bullet$] $h$ is a hermitian (not necessarily positive) metric, which satisfies $D(h) = 0$, $h(C_X a, b) = h(a, \widetilde{C}_{\bar{X}} b)$ for $(1, 0)$ fields $X$ and which is real-valued on the real subbundle $K_{\R} \subset K$ defined by $\kappa$,
\item[$\bullet$] $\cU$ and $\cQ$ are endomorphisms,
\end{enumerate}
satisfying the conditions
\begin{align}\label{CVstrCond}
\nonumber [C, \cU] &= 0,\\ 
\nonumber D'(\cU) - [C, \cQ] + C &= 0,\\
\nonumber D''(\cU) &= 0,\\
\nonumber D'(\cQ) + [C, \kappa \cU \kappa] &= 0,\\
\nonumber \cQ + \kappa \cQ \kappa &= 0,\\
\nonumber h(\cU a, b) &= h(a, \kappa \cU \kappa b),\\
h(\cQ a, b) &= h(a, \cQ b).
\end{align}
\end{definition}

Let us go back to the case of our bundle $K \to \Stab(\cC)$. Let $\iota$ denote the involution of $K$ acting as complex conjugation, combined with $x_{\alpha} \mapsto x_{-\alpha}$ in the triangulated case. Let $\psi$ be a fixed endomorphism of $K$. Then we can make the following ansatz on part of the data of a CV-structure on $K$:
\begin{enumerate}
\item[$\bullet$] $\kappa$ is the conjugate involution $\Ad_{\psi}(\iota)$,
\item[$\bullet$] the pseudo-hermitian metric $h$ is given by $h(a, b) = g(a, \kappa (b) )$ where $g$ is the quadratic form of Proposition \ref{DTFrobTypeStr}, 
\item[$\bullet$] $\cU$ is the endomorphism $Z$ as in Proposition \ref{DTFrobTypeStr},
\item[$\bullet$] the Higgs field $C$ is given by $- dZ$ as in Proposition \ref{DTFrobTypeStr}, and the anti-Higgs $\widetilde{C}$ is given by $\kappa C \kappa$.    
\end{enumerate}

\begin{proposition}\label{DTcvStr} Let $K \to \Stab(\cC)$ be the vector bundle of Definition \ref{Kbundle}.   
\begin{enumerate}
\item[(a)] There exist endomorphisms $\psi(Z)$, $\cQ(Z)$ and a connection $D$ on $K$ such that the choices of $C$, $\widetilde{C}$, $\kappa$, $h$, $\cU$ above together with $D$ and $\cQ$ give a CV-structure on $K$ (in the abelian case only the conditions not involving $h$ are satisfied). Moreover $\psi$ and $\cQ$ induce fibrewise derivations of $\C[K(\cC)]$ as a commutative algebra.
\item[(b)] Fix $Z$ and let $\lambda \in \R^+$ denote a scaling parameter. Then
\begin{equation*}
\lim_{\lambda \to 0} \cQ(\lambda Z) = \cV, 
\end{equation*}
where $\cV = \ad f(Z)$ is the endomorphism of Proposition \ref{DTFrobTypeStr} (i.e. essentially the Joyce holomorphic generating function).
\end{enumerate} 
\end{proposition}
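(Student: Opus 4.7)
My plan is to mirror the proof of Proposition \ref{DTFrobTypeStr}, replacing the holomorphic objects with real-analytic analogues, and to verify the CV-axioms \eqref{CVstrCond} by direct calculations parallel to those used in that proof. The guiding picture comes from \cite{gmn, bt_stokes, fgs}: the CV-structure is the semiclassical completion of a Riemann-Hilbert problem whose Stokes data encode the invariants $\dt(\alpha, Z)$.

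The first step is to fix $\psi = \Id$, so that $\kappa = \iota$. On a basis element $x_\alpha$ both $\kappa C \kappa$ and $\widetilde{C}$ act by multiplication by $d\bar{Z}(\alpha)$, so the intertwining condition $\kappa C \kappa = \widetilde{C}$ and the involutivity $\kappa^2 = \Id$ are immediate; similarly $h(a,b) = g(a, \kappa b)$ inherits $D(\kappa) = 0$, $D(h) = 0$ and the mixed symmetry $h(C_X a, b) = h(a, \widetilde{C}_{\bar{X}}b)$ directly from the properties of $g$ established in Proposition \ref{DTFrobTypeStr}. That $\psi$ and $\kappa$ act as commutative-algebra derivations of $\C[K(\cC)]$ reduces to the analogous fact for $\iota$ together with the multiplicativity of the twist $(-1)^{\bra -,-\ket}$.

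The key construction is then $\cQ = \ad F(Z, \bar{Z})$, where $F = \sum_{\alpha} \widetilde{F}^{\alpha}(Z, \bar{Z}) x_\alpha$ is built combinatorially from the same trees and DT invariants as Joyce's $f$, but with the branched holomorphic functions $J_n(z_1, \ldots, z_n)$ replaced by single-valued real-analytic kernels $\widetilde{J}_n(z_1, \ldots, z_n; \bar{z}_1, \ldots, \bar{z}_n)$. The connection is $D = \nabla^r + \bar{\partial}_K + A$, with the $(0,1)$-correction $A$ pinned down by $D(h) = 0$ together with the condition that $(D', D'', C, \widetilde{C})$ remain a $(DC\widetilde{C})$-structure in the sense of Lemma \ref{DTdccStr}. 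The remaining CV-axioms then reduce, via this ansatz, to a real-analytic analogue of Joyce's PDE \eqref{joyPDE} for the coefficients $\widetilde{F}^{\alpha}$, to algebraic identities like $[C, \cU] = 0$ and $D''(\cU) = 0$, and to the reality condition $\kappa(F) = -F$. All can be checked by computations strictly parallel to those in Proposition \ref{DTFrobTypeStr}, using in particular the 3CY symmetry $\dt(\alpha, Z) = \dt(-\alpha, Z)$ and the fact that all sums are purely formal.

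For part (b), the conformal invariance $f(\lambda Z) = f(Z)$ of Lemma \ref{confInv} is combined with the scaling behaviour of the kernels: the $\widetilde{J}_n$ are designed so that $\widetilde{J}_n(\lambda z, \lambda \bar{z}) \to J_n(z)$ as $\lambda \to 0$, whence $F(\lambda Z, \lambda \bar{Z}) \to f(Z)$ term-by-term, and $\cQ(\lambda Z) \to \cV(Z)$ follows. The main obstacle is the construction of the $\widetilde{J}_n$ themselves: Joyce's holomorphic $J_n$ are defined only up to branch cuts whose jumps are precisely arranged to cancel the jumps of the $\dt(\alpha_i, Z)$ across walls, and in the non-holomorphic setting this delicate cancellation has to be reorganised into a single-valued real-analytic object globally on $\Stab(\cC)$. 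This is exactly the content of the Riemann-Hilbert problem of \cite{gmn, bt_stokes}, which pins down $\widetilde{J}_n$ up to gauge and whose graph-integral solution is developed in the later sections of the paper.
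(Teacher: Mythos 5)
Your guiding intuition — that the CV-structure arises from a Riemann-Hilbert/isomonodromy problem whose Stokes data encode $\dt(\alpha,Z)$, and that the crux is replacing Joyce's holomorphic kernels by non-holomorphic ones — is exactly right. But the specific ansatz you make in step one is wrong, and the error is structural rather than cosmetic.

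Setting $\psi = \Id$, hence $\kappa = \iota$, is incompatible with the CV-axioms under your ansatz for $D$. Concretely: the axiom $D(\kappa) = 0$ with $\kappa = \iota$ forces the $(0,1)$-correction to be the $\iota$-conjugate of the $(1,0)$-correction, so with $D' = \nabla^r = d + \sum_\gamma \ad f^\gamma\,\frac{dZ(\gamma)}{Z(\gamma)}$ one is forced into $A = \iota(\nabla^r - d)\iota = \sum_\gamma \ad(\iota f^\gamma)\,\frac{d\bar Z(\gamma)}{\bar Z(\gamma)}$, which is \emph{not} diagonal (it involves $\ad x_\gamma$ for $\gamma \neq 0$). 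On the other hand, the separate axiom $D''(\cU) = 0$ with $\cU = Z$ forces $[A, Z] = 0$, which on the generic locus means $A$ acts diagonally on the $x_\alpha$. These two constraints are incompatible unless $f = 0$. A related obstruction: if you insist $D' = \nabla^r$ (with Joyce's $f^\gamma$ as coefficients), the axiom $D'(\cU) - [C, \cQ] + C = 0$ with $\cQ = \ad F$ literally forces $F^\alpha = f^\alpha$, i.e. $\cQ = \cV$ on the nose, at which point there is nothing left to deform and part (b) is vacuous. So neither half of your ansatz can survive.

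What the paper actually does (rigorously, in Proposition \ref{DTcvStrFps}, which the formal Proposition \ref{DTcvStr} merely prefigures) is to take $\psi = \tilde{X}^{-1}_0(Z)$, the $z \to 0$ limit of the distinguished solution to the Riemann-Hilbert problem. This operator is deeply nontrivial: it is precisely the gauge change that diagonalizes the irregular singularity of the associated $\PP^1_z$-connection at $z = 0$ (so that $C = -dZ$, $\cU = Z$), at the cost of \emph{not} diagonalizing it at $z = \infty$. The resulting $\kappa = \Ad_{\psi}(\iota)$, $\widetilde{C} = \kappa C \kappa$, $D = d + \Ad_{\tilde X_0^{-1}}\mathcal{B}^{(0)}$ and $\cQ = -\Ad_{\tilde X_0^{-1}}\mathcal{A}^{(0)}$ are all conjugates of pieces of the isomonodromic connection $\nabla^{str}$, and the conjugation relations $\mathcal{A}^{(1)} = -\iota\mathcal{A}^{(-1)}\iota$, $\mathcal{B}^{(1)} = \iota\mathcal{B}^{(-1)}\iota$ are exactly what make the CV-axioms close up. In particular, $D'$ is not $\nabla^r$, and $\cQ$ is not obviously of the inner form $\ad F$: rather, it is characterized as the residue of the connection form, which section \ref{explicitSec} then expresses via multivariate Lagrange inversion and graph integrals. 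So it is not only the kernels $\widetilde{J}_n$ that are nontrivial — the conjugating operator $\psi$ and the connection $D$ are the real carriers of the DT data, and deferring them to a "gauge choice" breaks the argument. Part (b) then follows not by termwise convergence of kernels but by the isomonodromy/uniqueness argument: the family of connections $\nabla_{\bf s}(Z,\lambda)$ has fixed generalized monodromy as $\lambda \to 0$, and the limit is pinned down by the Stokes-factor uniqueness theorem of \cite{bt_stokes}.
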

\begin{proof} We will explain a rigorous approach and prove a rigorous result (which applies to sufficiently simple abelian and triangulated categories) in section \ref{stabDataSec}  and Proposition \ref{DTcvStrFps}. The present formal statement can be ``proved" (in the same sense as Proposition \ref{DTFrobTypeStr}) by the same arguments provided we work with formal infinite sums, ignoring convergence questions.
\end{proof}
There are explicit formulae for the matrix elements $g(x_{\alpha}, \cQ(x_{\beta}))$, $g(x_{\alpha}, \psi(x_{\beta}))$ which are very similar to \eqref{fFunFormal}, see section \ref{explicitSec}. When $\cC$ is triangulated these are always formal infinite sums. They are not known to converge in general and indeed the convergence question is a priori ill-posed since no specific summation order has been fixed.

In the light of Proposition \ref{DTcvStr} (b) it is natural to make the following definition.
\begin{definition} The \emph{CV-deformation} of the Joyce holomorphic generating function $f(Z)$ is the operator $\cQ(Z)$ given by Proposition \ref{DTcvStr} (a).
\end{definition}

There is an analogue of Lemma \ref{confInv}, which gives a new point of view on the conformal invariance property $f(\lambda Z) = f(Z)$. It follows from the proof of Proposition \ref{DTcvStrFps}.
\begin{lem}\label{confLemmaFormal} Let $(D, C, \widetilde{C}, \kappa, h, \cU, \cQ)$ be the CV-structure of Proposition \ref{DTcvStr}. Let $p\!: \Stab(\cC) \times \PP^1_z \to \Stab(\cC)$ denote the projection, and suppose $\lambda \in \R^+$ is a scaling parameter. The meromorphic connection on $p^*K \to \Stab(\cC) \times \PP^1_z$ given by
\begin{equation*}
D + \frac{C}{z} + z \widetilde{C} + \left(\frac{1}{z^2} \cU - \frac{1}{z}\cQ - \kappa \cU \kappa\right)dz
\end{equation*}
is flat. Under the scaling $Z \mapsto \lambda Z$, $z = \lambda t$, $\lambda \to 0$ it flows to the flat connection of Lemma \ref{confInv}.
\end{lem}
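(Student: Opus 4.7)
My plan has two parts: verifying flatness of the meromorphic connection on $p^*K$, and identifying its $\lambda\to 0$ limit under the rescaling.

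For flatness, I would compute the curvature of $\widetilde{D}:=D+C/z+z\widetilde{C}+(\cU/z^2-\cQ/z-\kappa\cU\kappa)\,dz$ directly. Since the connection $1$-form has no $d\bar z$ component and its coefficients are Laurent polynomials in $z$ alone (with no $\bar z$ dependence), the only types of $2$-form that can appear in $\widetilde{D}^2$ are pure base $2$-forms and (base $1$-form)$\wedge\,dz$. I would then decompose each of these further by Dolbeault bidegree on the base and by the power of $z$. This produces a finite list of independent identities, each of which is exactly one condition of \eqref{DccCond} or \eqref{CVstrCond}: for instance the $(2,0)$ base piece splits in powers of $z$ into $(D')^2=0$, $D'(C)=0$ and $C\wed C=0$; the $(1,1)$ base piece at order $z^0$ gives $D'D''+D''D'=-(C\widetilde{C}+\widetilde{C}C)$; and the coefficient of $dz$ yields, at successive powers of $z$, $[C,\cU]=0$, $D'(\cU)-[C,\cQ]+C=0$, $D''(\cU)=0$ and (after using $D(\kappa)=0$) the $\kappa$-conjugate $D'(\cQ)+[C,\kappa\cU\kappa]=0$. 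Since Proposition \ref{DTcvStr} provides the CV-structure, all these equations hold, so $\widetilde{D}^2=0$.

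For the scaling limit, the key observation is that $C=-dZ$, $\widetilde{C}=d\bar Z$ and $\cU=Z$ are homogeneous of degree one in $(Z,\bar Z)$, and $\kappa\cU\kappa$ is too (in the limit $\lambda\to 0$, using $\lim_{\lambda\to 0}\psi(\lambda Z)=\Id$, it reduces to the multiplication operator $\iota\cU\iota$ acting as $x_\alpha\mapsto-\overline{Z(\alpha)}x_\alpha$). Substituting $Z\mapsto\lambda Z$ and $z=\lambda t$, $dz=\lambda\,dt$, one finds that $C/z\mapsto C/t$ and $\cU\,dz/z^2\mapsto \cU\,dt/t^2$, whereas $z\widetilde{C}$ and $\kappa\cU\kappa\,dz$ both acquire an overall factor $\lambda^2$ and vanish in the limit. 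The remaining term $-\cQ(Z)\,dz/z$ becomes $-\cQ(\lambda Z)\,dt/t$, which tends to $-\cV\,dt/t$ by Proposition \ref{DTcvStr}(b). Combined with $D\to\nabla^r$ as $\lambda\to 0$, one recovers exactly the flat connection of Lemma \ref{confInv}.

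The main obstacle I anticipate is justifying the asymptotics $D\to\nabla^r$ and $\psi(\lambda Z)\to\Id$, since the precise form of $D$ and $\psi$ in the CV-structure is only specified implicitly by the axioms. This is precisely the content of the proof of Proposition \ref{DTcvStrFps}, to which the statement of the lemma refers: the explicit graph-integral representations of section \ref{explicitSec} make the $\lambda$-dependence transparent and show, in complete parallel with the limit for $\cQ$, that the deviations from $\nabla^r$ in $D$ and from $\Id$ in $\psi$ are of order $O(\lambda)$ as $\lambda\to 0$.
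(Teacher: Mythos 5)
Your flatness argument is a valid alternative to the paper's and is more elementary in nature: you verify $\widetilde{D}^2=0$ directly by decomposing the curvature by Dolbeault bidegree on the base and by powers of $z$, matching each vanishing coefficient to one of the $(DC\widetilde{C})$- or CV-axioms. This shows that flatness is an abstract consequence of the axioms, in the spirit of Hertling's characterization of CV-structures, and is independent of the specific construction. The paper never does this check: as it remarks, the lemma ``follows from the proof of Proposition \ref{DTcvStrFps}'', where the connection arises as $\tilde{X}^{-1}_0\cdot\nabla^{str}(Z)$, a gauge transform of a connection that is flat by construction (locally a pullback of the trivial flat connection $\nabla^{tr}$ by the Riemann--Hilbert solution $\tilde{X}$), so flatness is free. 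Your route has the virtue of isolating which axioms are used, but needs more bookkeeping than your sketch gives at the mixed $(1,1)$ and $z^{0},z^{1}$ coefficients of the $dz$-part, where the $\kappa$-conjugate identities $\cQ+\kappa\cQ\kappa=0$, $\widetilde{C}=\kappa C\kappa$, $D(\kappa)=0$ must be invoked. For the limit, your substitution $Z\mapsto\lambda Z$, $z=\lambda t$ correctly sorts the terms by homogeneity, and appealing to Proposition \ref{DTcvStr}(b) for $\cQ(\lambda Z)\to\cV$ is legitimate since the lemma hypothesis cites that proposition; the gap you flag for $D(\lambda Z)\to\nabla^r$ and $\psi(\lambda Z)\to\Id$ is genuine. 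The paper handles this differently and more uniformly: it observes that the family of restrictions to $\PP^1_z$ is isomonodromic with the prescribed Stokes data, checks that the $\lambda\to 0$ limit exists, and invokes the uniqueness result of \cite{bt_stokes} to identify the limiting connection, which fixes all coefficient limits ($\cQ$, $D$, $\psi$) simultaneously. Closing the gap via the explicit graph integrals of section \ref{explicitSec}, as you propose, is possible but amounts to re-deriving Proposition \ref{DTcvStr}(b) and its companions rather than using them.
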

\section{Formal families of structures}\label{stabDataSec}
 
Starting with the present section we study the Frobenius type and CV-structures of Donaldson-Thomas theory in a rigorous abstract setting.  

\subsection{Stability data} Fix a finite rank lattice $\Gamma$ with a skew-symmetric bilinear form $\bra - , - \ket$. We denote by $n$ the rank of $\Gamma$.
\begin{definition} We introduce coefficients $c(\alpha_1, \ldots, \alpha_k)$ given by \eqref{Uproduct}. Notice that these only depend on $(\Gamma, \bra - , - \ket)$.
\end{definition}
\begin{definition} A \emph{central charge} $Z$ is a group homomorphism $\Gamma \to \C$.   

A \emph{spectrum} is a function of the form 
\begin{equation*}
(\alpha, Z ) \mapsto \Omega(\alpha, Z ) \in \Q
\end{equation*}
for all $\alpha \in \Gamma$ and $Z$ varying in an open subset $U$ of a linear subspace of $\Hom(\Gamma, \C)$.    

A \emph{distinguished ray}\footnote{The opposite of a ``BPS ray" in physics terminology.} $\ell_{\alpha}(Z) \subset \C^*$ is a ray of the form $\R_{> 0}Z(\alpha)$ such that $\Omega(\alpha, Z ) \neq 0$.

We say that the spectrum $\Omega$ is 
\begin{enumerate}
\item[$\bullet$] \emph{positive} if there exists a $\Z$-basis $\{\gamma_i\}$ of $\Gamma$ such that $\Omega(\alpha, Z)$ vanishes unless $\alpha$ is a nonzero positive integral combination of the $\alpha_i$. In this case we say that $\{\gamma_i\}$ is a positive basis for $\Omega$;
\item[$\bullet$] \emph{symmetric} if 
\begin{equation*}
\Omega(\alpha, Z) = \Omega(- \alpha, Z)
\end{equation*}
for all $\alpha \in \Gamma$, $Z \in U$.
\item[$\bullet$] the \emph{double of a positive spectrum} if $\Omega$ is symmetric and there is a positive spectrum $\widetilde{\Omega}$ such that $\Omega(\alpha, Z) = \widetilde{\Omega}(\pm \alpha, Z)$ for all $\alpha \in \Gamma$, $Z \in U$. 
\end{enumerate}
\end{definition}
\begin{definition} Let $\{\gamma_i\}$ be a fixed basis for $\Gamma$. The locus of \emph{positive central charges} $\Hom^+(\Gamma, \C) \subset \Hom(\Gamma, \C)$ is given by central charges mapping $\{\gamma_i\}$ to the open upper half plane $\mathbb{H} \subset \C$. 
\end{definition}
In the notation of the introduction we have the ``effective cone" $\Gamma^+$ given by nonzero positive linear combinations of the $\{\gamma_i\}$ and $\Hom^+(\Gamma, \C)$ is given by central charges mapping $\Gamma^+$ to $\mathbb{H}$. Recall that in the categorical situation described in the introduction, when $\cA$ is a finite length heart of a 3CY category $\cC$, $\Gamma^+$ is given by $K_{> 0}(\cA)$ and there is a natural positive basis given by the classes of simple objects $[S_1], \ldots, [S_n]$. When well defined the corresponding symmetric spectrum given by Donaldson-Thomas theory is the double of a positive spectrum.
\begin{definition} We say that $Z \in \Hom(\Gamma, \C)$ and $\Omega$ satisfy the \emph{support condition} if there exists a constant $c > 0$ such that picking a norm $|| - ||$ on $\Gamma \otimes \R$ we have
\begin{equation}\label{support}
|Z(\alpha)| > c || \alpha||
\end{equation}
for all $\alpha \in \Gamma$ with $\Omega(\alpha, Z ) \neq 0$. The condition does not depend on the specific choice of norm.
\end{definition}
The support condition was first introduced by Kontsevich-Soibelman in \cite{ks} section 1.2, where its geometric relevance (related to special Lagrangian geometry) was also discussed. Note that if $\Omega$ is positive or the double of a positive spectrum parametrised by $\Hom^+(\Gamma, \C)$ then the support condition is automatically satisfied on $\Hom^+(\Gamma, \C)$. It holds uniformly on all subsets of $\Hom^+(\Gamma, \C)$ where $Z$ is bounded away from zero on the elements of a positive basis $\{\gamma_i\}$. 
\begin{definition}\label{expGrowth} We say that a spectrum $\Omega$ \emph{grows at most exponentially at $Z $} if  there is a $\lambda > 0$ such that 
\begin{equation}\label{expBound}
\sum_{\alpha \in \Gamma}   |\Omega(\alpha, Z )| \exp( -  |Z(\alpha)| \lambda ) < \infty.
\end{equation} 
\end{definition}

The spectra coming from Donaldson-Thomas theory share a crucial property: they are \emph{continuous}, that is they define \emph{continuous families of stability data} (in the sense of \cite{ks} section 2.3) on the Kontsevich-Soibelman graded Lie algebra modelled on the underlying lattice $(\Gamma, \bra - , - \ket)$. We now give the details of this property.
\begin{definition}\label{ksAlgebra} The \emph{Kontsevich-Soibelman Poisson algebra} $\g_{\Gamma}$ is the (associative, commutative) group algebra $\C[\Gamma]$ endowed with the twisted multiplication and Lie bracket induced by $\bra - , - \ket$: $\g_{\Gamma}$ is generated by $x_{\alpha}$, $\alpha \in \Gamma$, with commutative product $x_{\alpha} x_{\beta} = (-1)^{\bra \alpha, \beta\ket}x_{\alpha + \beta}$ and bracket $[x_{\alpha}, x_{\beta}] = (-1)^{\bra \alpha, \beta \ket}\bra\alpha, \beta\ket x_{\alpha + \beta}$. To avoid confusion we write $\exp_*$ for the commutative algebra exponential in $\g_{\Gamma}$. 
 \end{definition}
One checks that $\g_{\Gamma}$ is indeed Poisson, i.e. inner Lie algebra derivations are commutative algebra derivations. A central charge $Z$ defines an endomorphism of $\g_{\Gamma}$ by $Z(x_{\alpha}) = Z(\alpha)x_{\alpha}$. This is in fact a commutative algebra derivation. 

\begin{definition} Fix a basis $\{\gamma_i\}$ as above. We write $\g_{> 0} \subset \g_{\Gamma}$ for the monoid generated by $x_{\alpha}$ where $\alpha$ is nonzero and has nonnegative coefficients with respect to the basis. We let $\widehat{\g}_{> 0}$ be the completion of $\g_{> 0}$ along the ideal $(x_{\gamma_1}, \ldots, x_{\gamma_n})$. 
\end{definition}
For a choice of a strictly convex cone $V \subset \C^*$ and a central charge $Z$ satisfying the support condition, there is a complete Poisson Lie algebra $\widehat{\g}_{\Gamma, V, Z}$ topologically generated by elements $x_{\alpha}$ with $Z(\alpha) \in V$. The algebra $\widehat{\g}_{\Gamma, V, Z}$ does not change when varying $Z$ as long as no ray $\R_{> 0}Z(\alpha)$ crosses the boundary $\del V$, so with this choice of $Z$ we will denote the complete algebra simply by $\widehat{\g}_{\Gamma, V}$. We denote by $\exp(\widehat{\g}_{\Gamma, V})$ the corresponding formal Lie group (i.e. the group law is defined formally by the usual Baker-Campbell-Hausdorff formula).  

Let $\dt(\alpha, Z )$ denote the M\"obius transform of $\Omega$, 
\begin{equation}\label{mobius}
\dt(\alpha, Z ) = \sum_{k > 0, k | \alpha} \frac{1}{k^2} \Omega(k^{-1}\alpha, Z ). 
\end{equation}
\begin{definition}\label{C0def} The \emph{family of stability data on $\g_{\Gamma}$ parametrised by $U$} corresponding to the spectrum $\Omega$ is the $\g_{\Gamma}$-valued function given by
\begin{equation*}
(\alpha, Z ) \mapsto \dt(\alpha, Z ) x_{\alpha}.
\end{equation*}
This family of stability data on $\g_{\Gamma}$ is continuous in the sense of \cite{ks} section 2.3 if all $Z \in U$ satisfy the support condition, and for every fixed strictly convex cone $V \subset \C^*$ the group element
\begin{equation}\label{contCond}
\prod^{\to, Z}_{\ell \subset V} \exp\left( \sum_{Z(\alpha) \in \ell} \dt(\alpha, Z ) x_{\alpha}\right) \in \exp(\widehat{\g}_{\Gamma, V})
\end{equation}
is constant as long as no distinguished ray $\ell_{\alpha}(Z)$ crosses the boundary $\del V$, where $\prod^{\to, Z}$ denotes the operator writing the ensuing formal Lie group elements from left to right according to the clockwise $Z$-order.

We say that the spectrum $\Omega$ is continuous is the corresponding family of stability data on $\g_{\Gamma}$ is. We say that the family of stability data $\dt(\alpha, Z)$ is positive, symmetric, or the double of a positive family if the corresponding condition is satisfied by the underlying spectrum $\Omega(\alpha, Z)$ given by inverting \eqref{mobius},
\begin{equation*}
\Omega(\alpha, Z) = \sum_{k | \alpha} \frac{1}{k^2} m(k) \dt(k^{-1} \alpha, Z)
\end{equation*} 
where $m$ denotes the M\"obius function.
\end{definition}

It will be important for us to regard the group element in \eqref{contCond}, under suitable conditions, as a product of explicit ``symplectomorphisms". 
\begin{definition}\label{genericCharge} A central charge $Z \in \Hom(\Gamma, \C)$ is \emph{generic} if elements $x_{\alpha}, x_{\beta}$ with $Z(\alpha), Z(\beta)$ lying on the same ray $\ell$ have vanishing Lie bracket (i.e. $\bra \alpha, \beta \ket = 0$). We say that $Z$ is \emph{strongly generic} if $Z(\alpha), Z(\beta)$ lying on the same ray $\ell$ implies that $\alpha, \beta$ are linearly dependent. We write $\Hom^{sg}(\Gamma, \C)$ for the locus of strongly generic central charges.
\end{definition}
\noindent For $\Omega \in \Q$, $Z(\beta) \in V$ let $T^{\Omega}_{\beta}$ denote the element of $\Aut(\widehat{\g}_{\Gamma, V})$ given by
\begin{equation*}
T^{\Omega}_{\beta}(x_{\alpha}) = x_{\alpha} (1 - x_{\beta})^{ \bra \beta, \alpha \ket \Omega}
\end{equation*} 
(the right hand side denoting a formal power series expansion). In fact $T^{\Omega}_{\beta}$ is a Poisson automorphism (it preserves the Lie bracket). This follows from the identity
\begin{equation*}
T^{\Omega}_{\beta} = \exp_{D(\widehat{\g}_{\Gamma, V})}\left(- \Omega \sum_{k \geq 1} \frac{[x_{k \beta}, - ]}{k^2}\right).
\end{equation*}
Let $\Aut(\widehat{\g}_{\Gamma, V})$ and $D(\widehat{\g}_{\Gamma, V})$ denote the group of automorphisms of $\widehat{\g}_{\Gamma, V}$ as a commutative, associative algebra, respectively the $\widehat{\g}_{\Gamma, V}$-module of commutative algebra derivations. Kontsevich-Soibelman \cite{ks} section 2.5 noticed that for generic $Z$  there is a factorisation in $\Aut(\widehat{\g}_{\Gamma, V})$
\begin{equation}\label{PoissonOps}
\exp_{D(\widehat{\g}_{\Gamma, V})}\left(\sum_{Z(\alpha) \in \ell} - \dt(\alpha, Z) [ x_{\alpha}, -] \right) = \prod_{Z(\beta) \in \ell}T^{\Omega(\beta, Z)}_{\beta}.  
\end{equation}
The continuity condition becomes the constraint that the product of Poisson automorphisms $\prod^{\to, Z}_{\ell \subset V} \prod_{Z(\alpha) \in \ell} T^{\Omega(\alpha, Z)}_{\alpha}$ remains constant in the locus of generic central charges (even when crossing the nongeneric locus) as long as no rays supporting a nonvanishing factor enter or leave $V$.

The notion of a continuous family of stability data with values in $\g$ makes sense quite generally for a $\Gamma$-graded Lie algebra $\g$ over $\Q$ (see \cite{ks} Section 1.2). It will be important for us to consider continuous families with values in the Lie algebra $\g_{\Gamma}\pow{s_1, \ldots, s_n}$ endowed with the Poisson Lie bracket extended from $\g_{\Gamma}$ by $\C\pow{s_1, \ldots, s_n}$-linearity. 

Let $\{\gamma_i\}$ be a basis for $\Gamma$ and introduce formal parameters ${\bf s} = (s_1, \ldots, s_n)$. We decompose $\alpha \in \Gamma$ as $\alpha = \sum_i a_i \gamma_i$ and set $[\alpha]_{\pm} = \sum_{i} [a_i]_{\pm} \gamma_i$ where $[a_i]_{\pm}$ denote the positive and negative parts. For all $\alpha \in \Gamma$ we write ${\bf s}^{\alpha}$ for the Laurent monomial $\prod_i s^{a_i}_i$. In particular ${\bf s}^{[\alpha]_+ - [\alpha]_-}$ is a monomial (not just a Laurent monomial). Let $J \subset \g_{\Gamma}\pow{\bf s}$ denote the ideal generated by $s_1, \ldots, s_n$.
\begin{definition} With a fixed choice of basis as above we write $T^{\Omega}_{\beta, {\bf s}}$ for the element of $\Aut(\g_{\Gamma}\pow{\bf s})$ given by  
\begin{equation*}
T^{\Omega}_{\beta, {\bf s}}(x_{\alpha}) = x_{\alpha} (1 - {\bf s}^{[\beta]_+ - [\beta]_-} x_{\beta})^{ \bra \beta, \alpha \ket \Omega}
\end{equation*} 
\end{definition} 
\begin{lem}\label{isomono} Let $\Omega$ be the double of a positive spectrum. Fix a positive basis $\{\gamma_i\}$. Suppose that $\Omega$ is continuous, parametrised by $\Hom^+(\Gamma, \C)$. Then the family of automorphisms $T^{\Omega}_{\beta, {\bf s}} \in \Aut(\g_{\Gamma}\pow{\bf s})$ comes from a continuous family of stability data with values in $\g_{\Gamma}\pow{\bf s}$ via the construction in \eqref{PoissonOps}. In particular the products $\prod^{\to, Z}_{\ell \subset V} \prod_{Z(\alpha) \in \ell} T^{\Omega(\alpha, Z)}_{\alpha, {\bf s}}$ for all fixed strictly convex sectors $V$ remain constant in the locus of generic central charges in $\Hom^+(\Gamma, \C)$ (even when crossing the nongeneric locus) as long as no rays supporting a nonvanishing factor enter or leave $V$.
\end{lem}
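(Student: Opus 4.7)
The plan is to realise the family of automorphisms $T^{\Omega}_{\beta,{\bf s}}$ as arising from an explicit continuous family of stability data with values in the $\C\pow{\bf s}$-linearly extended Kontsevich-Soibelman algebra $\g_\Gamma\pow{\bf s}$, via the factorisation \eqref{PoissonOps}, and then to transport the wall-crossing invariance from the continuity of the original family for $\Omega$. Since $\Omega$ is the double of a positive spectrum, $\dt(\alpha,Z) \neq 0$ forces $\alpha \in \Gamma^+ \cup (-\Gamma^+)$, and then ${\bf s}^{[\alpha]_+ - [\alpha]_-}$ lies in the maximal ideal $J = (s_1,\ldots,s_n) \subset \C\pow{\bf s}$. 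I would set
\[
\dt_{\bf s}(\alpha, Z) \;:=\; \dt(\alpha,Z)\,{\bf s}^{[\alpha]_+ - [\alpha]_-} \;\in\; J,
\]
and consider the $\g_\Gamma\pow{\bf s}$-valued assignment $(\alpha, Z)\mapsto \dt_{\bf s}(\alpha, Z)\,x_\alpha$.

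Next I would verify the factorisation identity in $\Aut(\g_\Gamma\pow{\bf s})$,
\[
\exp_{D(\g_\Gamma\pow{\bf s})}\!\left(\sum_{Z(\alpha)\in\ell} - \dt_{\bf s}(\alpha, Z)\,[x_\alpha, -]\right) \;=\; \prod_{Z(\beta)\in\ell} T^{\Omega(\beta,Z)}_{\beta,{\bf s}},
\]
as a direct consequence of \eqref{PoissonOps}: the bracket on $\g_\Gamma\pow{\bf s}$ is the $\C\pow{\bf s}$-linear extension of that on $\g_\Gamma$, so scalar monomials in ${\bf s}$ pass through brackets and exponentials; the derivation $-\Omega\sum_{k\geq 1}k^{-2}[x_{k\beta},-]$ is simply rescaled to $-\Omega\sum_{k\geq 1}k^{-2}{\bf s}^{k([\beta]_+-[\beta]_-)}[x_{k\beta},-]$, whose exponential is the defining formula of $T^{\Omega}_{\beta,{\bf s}}$. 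This step is essentially algebraic bookkeeping.

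The substantive step is to show that
\[
P_{\bf s}(V, Z) \;:=\; \prod^{\to, Z}_{\ell\subset V}\prod_{Z(\alpha)\in\ell} T^{\Omega(\alpha,Z)}_{\alpha,{\bf s}}
\]
is locally constant in the generic locus of $\Hom^+(\Gamma,\C)$ as long as no ray supporting a nontrivial factor enters or leaves $V$. I would treat this by $J$-adic truncation: modulo $J^N$, only the finitely many $\alpha$ with small enough ${\bf s}$-weight contribute, so $P_{\bf s}(V, Z) \bmod J^N$ lives in a finite-dimensional Lie group. By the factorisation identity above, this truncation is the image of the corresponding group element in $\exp(\widehat{\g}_{\Gamma, V})$ formed from the original continuous family $\dt$, so its $Z$-constancy is immediate from the continuity hypothesis of Definition \ref{C0def}. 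Taking the inverse limit over $N$ then yields constancy of $P_{\bf s}(V, Z)$ in $\Aut(\g_\Gamma\pow{\bf s})$.

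The main obstacle is the passage between the two relevant completions, namely the ray-adic $\widehat{\g}_{\Gamma, V}$ on the classical side and the ${\bf s}$-adic $\g_\Gamma\pow{\bf s}$ on the twisted side. The doubling hypothesis is essential precisely here: the symmetric ${\bf s}$-weighting makes each $J^N/J^{N+1}$ piece depend on only finitely many $\alpha$, so the infinite alternating products of the doubled spectrum collapse, at each truncation, to finite identities to which the Kontsevich-Soibelman wall-crossing formula applies directly. Without this collapse, one would face exactly the ill-defined infinite compositions described in the introduction.
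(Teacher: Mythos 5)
Your proposal correctly identifies the two structural moves: encode the ${\bf s}$-twist via $\dt_{\bf s}(\alpha,Z) = \dt(\alpha,Z)\,{\bf s}^{[\alpha]_+-[\alpha]_-}$, check the factorisation identity for $T^{\Omega}_{\beta,{\bf s}}$ by rescaling the derivation in the exponential, and then transport continuity through $J$-adic truncation. The first two steps are fine. The third step, however, has a genuine gap.

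The assertion that modulo $J^N$ the truncated product ``is the image of the corresponding group element in $\exp(\widehat{\g}_{\Gamma,V})$'' is not correct for a strictly convex $V$ that crosses the real axis. In that case the product involves both $\alpha\in\Gamma^+$ and $\alpha\in-\Gamma^+$, and the map $x_\alpha\mapsto{\bf s}^{[\alpha]_+-[\alpha]_-}x_\alpha$ fails to be a Lie algebra homomorphism: $[x_\alpha,x_\beta]$ lands in $\C\,x_{\alpha+\beta}$, but $[\alpha+\beta]_+-[\alpha+\beta]_-\neq([\alpha]_+-[\alpha]_-)+([\beta]_+-[\beta]_-)$ as soon as $\alpha$ and $\beta$ cancel in some coordinate. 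Concretely, the Baker--Campbell--Hausdorff expansion of the classical group element has a $Z$-independent coefficient of $x_\gamma$, but that coefficient is a sum over \emph{all} decompositions $\gamma=\sum_i\alpha_i$ with $Z(\alpha_i)\in V$; the ${\bf s}$-twisted analogue separates those decompositions by the monomial ${\bf s}^{\sum_i([\alpha_i]_+-[\alpha_i]_-)}$. Constancy of the total does not, by itself, give constancy of each ${\bf s}$-graded piece, so one cannot simply ``take the image'' of the classical identity.

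What is missing is a reduction step, which is exactly how the paper's (also brief) proof begins: constancy of \eqref{contCond} for all strictly convex $V$ is equivalent to constancy for all $V\subset\mathbb{H}$. The reason is that for a doubled positive spectrum and $Z\in\Hom^+(\Gamma,\C)$ no distinguished ray ever lies on $\R$, so a $V$ crossing $\R$ splits as $V_+\sqcup V_-$ with $V_\pm\subset\pm\mathbb{H}$, and the ordered product over $V$ factors as the product over $V_+$ composed with the product over $V_-$. On $V\subset\mathbb{H}$ all contributing $\alpha$ lie in $\Gamma^+$, hence $[\alpha]_+-[\alpha]_-=\alpha$ is additive under bracketing, $x_\alpha\mapsto{\bf s}^\alpha x_\alpha$ is a genuine Lie algebra embedding of $\g_{>0}$ into $\g_\Gamma\pow{\bf s}$, and every BCH term contributing to $x_\gamma$ carries the same weight ${\bf s}^\gamma$. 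With that reduction in place your $J$-adic truncation argument goes through and coincides with the paper's; without it the appeal to the classical Kontsevich--Soibelman continuity is not justified.
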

\begin{proof} Suppose that $\Omega$ is the double of a positive, continuous spectrum parametrised by $\Hom^+(\Gamma, \C)$. Then the continuity condition given by constancy of the formal Lie group element \eqref{contCond} holds if and only if it holds for all strictly convex cones $V$ contained in the open upper half-plane $\mathbb{H}$. On such a cone $V \subset \mathbb{H}$ the constancy condition for \eqref{contCond} is compatible with the extra grading by ${\bf s}$ by the Baker-Campbell-Hausdorff formula. 
\end{proof}
\begin{remark} The idea of working with such formal families is natural from the point of view of scattering diagrams described e.g. in \cite{gps}.
\end{remark}
\begin{definition} Let $\Omega$ be a positive, continuous spectrum parametrised by $\Hom^{+}(\Gamma, \C)$ and fix a positive basis. The corresponding Joyce function $f(Z)$ is the $\widehat{\g}_{> 0}$-valued function with graded components $\widetilde{f}^{\alpha}(Z) x_{\alpha}$ given by the expression  \eqref{fFunFormal}. This is well-defined because there are only finitely many possible decompositions in \eqref{fFunFormal} for each fixed $\alpha \in \Gamma_{> 0}$.     
\end{definition}
\begin{definition} Let $\Omega$ be the double of a positive, continuous spectrum parametrised by $\Hom^{+}(\Gamma, \C)$. The corresponding Joyce function $f_{\bf s}(Z)$ is the function with values in $\g_{\Gamma}\pow{\bf s}$ with $\Gamma$-graded components $\widetilde{f}^{\alpha}_{\bf s}(Z) x_{\alpha}$, where 
\begin{align}\label{fFunFps}
\nonumber \tilde{f}^{\alpha}_{\bf s}(Z) = &\sum_{\alpha_1 + \cdots + \alpha_k = \alpha,\, Z(\alpha_i) \neq 0} c(\alpha_1, \ldots, \alpha_k) J(Z(\alpha_1), \ldots , Z(\alpha_k))\\ &\prod_i {\bf s}^{[\alpha_i]_+ - [\alpha_i]_-} \dt(\alpha_i, Z).
\end{align}
This is well-defined because there are only finitely many decompositions in \eqref{fFunFps} modulo $J^N$ for $N \gg 1$. 
\end{definition}
\subsection{Formal families of Frobenius type and CV-structures} 
Fix a basis $\{\gamma_i\}$ of $\Gamma$. We consider a spectrum $\Omega$ which is either positive with respect to $\{\gamma_i\}$ or the double of such a positive spectrum.
\begin{definition}\label{KbundleFormal}  We introduce a holomorphic bundle $K \to \Hom^+(\Gamma, \C)$ given by:
\begin{enumerate}
\item[$\bullet$] if $\Omega$ is positive, $K$ is the trivial bundle with fibre $\widehat{\g}_{> 0}$;
\item[$\bullet$] if $\Omega$ is the double of a positive spectrum, $K$ is the trivial bundle with fibre $\g_{\Gamma}\pow{\bf s}$. 
\end{enumerate}
\end{definition}
Although our main result Theorem \ref{mainThm} only concerns the double of a positive spectrum, for the sake of completeness we summarise the results in the case of a positive spectrum in the following Proposition. The part concerning the Frobenius type structure follows from the results of \cite{bt_stokes}, while the claims about the CV-structure are proved exactly as in Proposition \ref{DTFrobTypeStrFps} below, working with $\widehat{\g}_{> 0}$ rather than $\g_{\Gamma}\pow{\bf s}$.   
\begin{proposition} Let $\Omega$ be a positive, continuous spectrum parametrised by $\Hom^+(\Gamma, \C)$. Let $K \to \Hom^+(\Gamma, \C)$ be the vector bundle of Definition \ref{KbundleFormal}. Then the obvious analogues of Propositions \ref{DTFrobTypeStr}, \ref{DTcvStr} and Lemmas \ref{confInv}, \ref{confLemmaFormal} hold.
\end{proposition}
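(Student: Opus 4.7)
The three assertions in the proposition concern, respectively, the Frobenius type structure of Proposition \ref{DTFrobTypeStr}, the CV-structure of Proposition \ref{DTcvStr}, and the flatness/conformal invariance statements of Lemmas \ref{confInv} and \ref{confLemmaFormal}. The plan is to verify each of them by transplanting the computations from the (triangulated, formal) templates into the rigorous setting afforded by the $J$-adic filtration of $\widehat{\g}_{> 0}$, where $J = (x_{\gamma_1}, \ldots, x_{\gamma_n})$ is the ideal generated by the positive basis. The key structural remark is that positivity of $\Omega$ with respect to $\{\gamma_i\}$ forces, for each fixed $\alpha \in \Gamma_{> 0}$, the set of decompositions $\alpha = \alpha_1 + \cdots + \alpha_k$ with every $\alpha_j \in \Gamma_{> 0}$ to be finite. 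Consequently the graded component $\tilde{f}^{\alpha}(Z)$ defined by \eqref{fFunFormal} is a finite sum and $f(Z)$ is an honest $\widehat{\g}_{> 0}$-valued function; the same finiteness will hold for every other object constructed from $\Omega$ through analogous summations.

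For the Frobenius type structure, I would define $(\nabla^r, C, \cU, \cV)$ (together with the quadratic form $g$) by the same formulas as in Proposition \ref{DTFrobTypeStr}, now interpreted as honest operators on the trivial bundle $K$ with fibre $\widehat{\g}_{> 0}$. Each of them preserves the $J$-adic filtration, so that modulo any fixed power $J^N$ the axioms \eqref{FrobTypeCond1} reduce to identities in the finite-dimensional quotient $\widehat{\g}_{> 0}/J^N$; there the formal manipulations from the proof of Proposition \ref{DTFrobTypeStr}, ultimately based on the Joyce PDE \eqref{joyPDE}, apply verbatim and are now rigorous. This step is the content of the construction in \cite{bt_stokes}.

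For the CV-structure, I would introduce $\psi(Z)$ and $\cQ(Z)$ via the explicit formulae that will be derived in section \ref{explicitSec}. The same positivity argument shows that each graded component is a finite sum, so these operators are well defined on $\widehat{\g}_{> 0}$; one then verifies the axioms \eqref{CVstrCond} following the proof of Proposition \ref{DTcvStrFps}, simply replacing $\g_{\Gamma}\pow{\bf s}$ throughout by $\widehat{\g}_{> 0}$. The limit $\lim_{\lambda \to 0} \cQ(\lambda Z) = \cV$ is then a graded statement read off directly from the explicit formulae, and the two flatness/conformal invariance Lemmas are formal consequences of the Frobenius type and CV-structure axioms established above, so their proofs carry over without change. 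The principal technical point, and where the positivity hypothesis really pays off, will be to ensure that each identity verified termwise in $\g_{\Gamma}$ indeed holds in the completion $\widehat{\g}_{> 0}$; once filtration-preservation is in place this reduces to computations in finite-dimensional quotients, whereas without positivity the question would be ill-posed exactly as in the formal triangulated setting of section \ref{formalSec}.
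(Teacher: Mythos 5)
Your proposal is correct and follows essentially the same route as the paper, which (very tersely) notes that the Frobenius type part follows from \cite{bt_stokes} and that the CV-structure claims are proved by running the arguments of the formal-family Propositions \ref{DTFrobTypeStrFps}, \ref{DTcvStrFps} with $\widehat{\g}_{> 0}$ in place of $\g_{\Gamma}\pow{\bf s}$. Your unpacking of why positivity suffices --- that each graded component involves only finitely many decompositions, so every operator preserves the $J$-adic filtration and the axioms can be checked in the finite-dimensional quotients $\widehat{\g}_{> 0}/J^N$ --- is exactly the mechanism the paper implicitly relies on, and your identification of the Joyce PDE and the explicit graph-integral formulae of section \ref{explicitSec} as the underlying computational inputs matches the paper's intent.
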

Turning to the double of a positive spectrum, the construction of a formal family of Frobenius type structures follows from the results of \cite{bt_stokes}, so we only give a sketch of the proof. 
\begin{proposition}\label{DTFrobTypeStrFps} Let $\Omega$ be the double of a positive, continuous spectrum parametrised by $\Hom^{+}(\Gamma, \C)$. Let $K \to \Hom^+(\Gamma, \C)$ be the vector bundle of Definition \ref{KbundleFormal}, with fibre $\g_{\Gamma}\pow{\bf s}$. Then there is a $\C\pow{{\bf s}}$-linear Frobenius type structure on $K$ with flat holomorphic connection given by 
\begin{equation*}
\nabla^r_{{\bf s}} = d + \sum_{\alpha} \ad f^{\alpha}_{\bf s}( Z) \frac{d Z(\alpha)}{Z(\alpha)},
\end{equation*}
with residue endomorphism
\begin{equation*}
\cV_{{\bf s}} = \ad f_{\bf s}( Z)
\end{equation*}
and with $C, \cU, g$ given by $- dZ, Z$ and the quadratic form of Proposition \ref{DTFrobTypeStr}, extended by $\C\pow{{\bf s}}$-linearity. In other words the equations $(\nabla^r_{\bf s})^2 = 0$ and \eqref{FrobTypeCond1} - \eqref{FrobTypeCond2} hold as identities of formal power series in the formal parameters $s_1, \ldots, s_n$.

In particular the coefficients of the formal power series \eqref{fFunFps} in ${\bf s}$ are well-defined holomorphic functions on $\Hom^+(\Gamma, \C)$.
\end{proposition}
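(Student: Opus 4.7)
The plan is to invoke the isomonodromic construction of \cite{bt_stokes} applied to the $\C\pow{\bf s}$-linear Poisson algebra $\g_\Gamma\pow{\bf s}$. By Lemma \ref{isomono}, the spectrum $\Omega$ defines a continuous family of stability data with values in $\g_\Gamma\pow{\bf s}$ parametrised by $\Hom^+(\Gamma, \C)$. Since the arguments of \cite{bt_stokes} depend only on the graded Poisson structure and continuity of the family, they apply verbatim over the base ring $\C\pow{\bf s}$ and produce a Joyce-type function $f_{\bf s}(Z) \in \g_\Gamma\pow{\bf s}$ with $\Gamma$-graded components $\tilde{f}^\alpha_{\bf s}(Z) x_\alpha$ satisfying the natural $\C\pow{\bf s}$-linear analogue of Joyce's PDE,
\begin{equation*}
d f^{\alpha}_{\bf s}(Z) = \sum_{\beta + \gamma = \alpha,\, \beta, \gamma \neq 0} [f^{\beta}_{\bf s}, f^{\gamma}_{\bf s}]\, d\log Z(\gamma).
\end{equation*}

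The next step is to identify the components produced by this abstract construction with the explicit sum \eqref{fFunFps}. This comes from the combinatorial description of the universal enveloping product via \eqref{Uproduct} together with Joyce's explicit presentation of the multilogarithm functions $J_n$, and the ${\bf s}$-weights enter through Lemma \ref{isomono} via the monomial ${\bf s}^{[\alpha_i]_+ - [\alpha_i]_-}$ attached to each factor $\dt(\alpha_i, Z)$. Well-definedness of \eqref{fFunFps} as an element of $\C\pow{\bf s}$ holds because modulo $J^N$ only finitely many decompositions of $\alpha$ contribute: each such decomposition yields a monomial of total ${\bf s}$-degree at least $\sum_i (\|[\alpha_i]_+\|_1 + \|[\alpha_i]_-\|_1)$, and for fixed $\alpha$ and bounded total degree only finitely many tuples $(\alpha_1, \ldots, \alpha_k)$ can contribute. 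Holomorphicity of the coefficients on $\Hom^+(\Gamma, \C)$ follows as in \cite{joy, bt_stokes}: the branch-cut discontinuities of $J_n$ are calibrated to cancel exactly against the wall-crossing jumps of $\dt$, and this cancellation is preserved under the $\C\pow{\bf s}$-linear extension because the formal continuity condition of Lemma \ref{isomono} encodes precisely the same wall-crossing identities at the level of the ${\bf s}$-graded algebra.

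Finally, the Frobenius type conditions \eqref{FrobTypeCond1}, \eqref{FrobTypeCond2} are verified by the same direct calculations as in the proof of Proposition \ref{DTFrobTypeStr}, now carried out $\C\pow{\bf s}$-linearly on $\g_\Gamma\pow{\bf s}$: every identity reduces to a statement that holds modulo $J^N$ for all $N$, and so as an identity of formal power series. The main obstacle, as in the unweighted case, is the holomorphicity claim across the nongeneric locus, which is where the delicate interplay between the branch-cut structure of $J_n$ and the wall-crossing behaviour of $\dt$ enters; the crucial point is that Lemma \ref{isomono} provides a bona fide continuous family of stability data in $\g_\Gamma\pow{\bf s}$, so that the isomonodromy arguments of \cite{bt_stokes} transfer without change and deliver the Frobenius type structure at the level of formal power series in ${\bf s}$ with holomorphic coefficients in $Z$.
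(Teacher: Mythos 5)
Your proposal follows essentially the same route as the paper's own (sketch) proof: both argue that Lemma \ref{isomono} upgrades $\Omega$ to a continuous family of stability data with values in $\g_\Gamma\pow{\bf s}$, invoke the inverse Stokes map / isomonodromy machinery of \cite{bt_stokes} to conclude that $f^\alpha_{\bf s}(Z)$ satisfies the $\C\pow{\bf s}$-linear version of Joyce's PDE \eqref{joyPDE}, and then rerun the direct verification of the Frobenius type axioms from Proposition \ref{DTFrobTypeStr} term by term in the ${\bf s}$-grading. The only difference is that you spell out explicitly the bookkeeping (finiteness of contributing decompositions modulo $J^N$ and how the branch-cut cancellation persists under the ${\bf s}$-weighted extension), which the paper leaves implicit in a one-line sketch; this is a correct elaboration, not a different argument.
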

\begin{proof}[Proof (sketch)] It follows from Lemma \ref{isomono} and the results of \cite{bt_stokes} (on the explicit inverse of a certain Stokes map) that the functions $f^{\alpha}_{\bf s}(Z)$ satisfy the PDE \eqref{joyPDE} as formal power series in ${\bf s}$. Then the corresponding Frobenius type structure is constructed as in the proof of Proposition \ref{DTFrobTypeStr}.
\end{proof}
Let $\iota$ denote the involution of $K$ acting as complex conjugation combined with $x_{\alpha} \mapsto x_{-\alpha}$. Note that $\iota$ is an anti-linear commutative algebra automorphism. Let $\psi_{\bf s}$ be a fixed invertible endomorphism of $K$. Then we can make the following ansatz on part of the data of a $\C\pow{\bf s}$-linear CV-structure on $K$:
\begin{enumerate}
\item[$\bullet$] $\kappa_{\bf s}$ is the conjugate involution $\Ad_{\psi_{\bf s}}(\iota)$,
\item[$\bullet$] the pseudo-hermitian metric $h_{\bf s}$ is given by $h(a, b) = g(a, \kappa_{\bf s} (b) )$ where $g$ is the quadratic form in Proposition \ref{DTFrobTypeStr},
\item[$\bullet$] $\cU$ is the endomorphism $Z$ extended by $\C\pow{\bf s}$-linearity, 
\item[$\bullet$] the Higgs field $C$ is given by $- dZ$ extended by $\C\pow{\bf s}$-linearity, and the anti-Higgs field $\widetilde{C}_{\bf s}$ by $\kappa_{\bf s} C \kappa_{\bf s}$.    
\end{enumerate}
\begin{proposition}\label{DTcvStrFps} Suppose we are in the situation of Proposition \ref{DTFrobTypeStrFps}.       

\begin{enumerate}
\item[(a)] There exist $\C{\pow{\bf s} }$-linear endomorphisms $\psi_{\bf s}$ and $\cQ_{\bf s}$ and a connection $D_{\bf s}$ on $K$ such that the choices of $C$, $\widetilde{C}_{\bf s}$, $\kappa_{\bf s}$, $h_{\bf s}$, $\cU_{\bf s}$ above together with $\cQ_{\bf s}$ give a $\C{\pow{\bf s} }$-linear CV-structure on $K$. In other words the equations \ref{DccCond} and \ref{CVstrCond} hold as identities of formal power series in ${\bf s}$. Moreover $\psi_{\bf s}$ and $\cQ_{\bf s}$ induce fibrewise $\C\pow{\bf s}$-linear derivations of $\g_{\Gamma}\pow{\bf s}$ as a commutative algebra.
\item[(b)] We have
\begin{equation*}
\lim_{\lambda \to 0} \cQ_{\bf s}(\lambda Z) = \cV_{\bf s}, 
\end{equation*}
where $\cV_{\bf s} = \ad f_{\bf s}( Z)$ is the endomorphism of Proposition \ref{DTFrobTypeStrFps} (i.e. essentially the formal family of Joyce holomorphic generating functions given by \eqref{fFunFps}).
\end{enumerate}
\end{proposition}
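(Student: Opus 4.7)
The plan is to construct the CV-structure via an isomonodromic Riemann--Hilbert problem on $\PP^1_z$, in the spirit of \cite{bt_stokes, fgs} and of the physical work \cite{gmn}, with all data extracted from the expansions of a canonical solution $X_{\bf s}(z, Z)$ at the two irregular singularities $z = 0$ and $z = \infty$. This parallels the argument for the Frobenius type structure in Proposition \ref{DTFrobTypeStrFps}, which used only the singularity at $z = 0$.

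First, I would set up the Riemann--Hilbert problem. For $Z$ in the strongly generic locus of $\Hom^+(\Gamma, \C)$, consider sectorial holomorphic maps $X_{\bf s}(z, Z)$ on the sectors of $\C^*$ cut out by the distinguished rays $\ell_\alpha(Z)$ and the opposite rays $-\ell_\alpha(Z)$, with Stokes jumps given by the Kontsevich--Soibelman automorphisms $T^{\Omega(\alpha,Z)}_{\alpha,{\bf s}}$ across the rays in the upper half plane and by their $\iota$-conjugates across the opposite rays, irregular asymptotics governed by $\cU$ at $z = 0$ and by $\kappa_{\bf s}\cU\kappa_{\bf s}$ at $z = \infty$, and a reality condition under $z \mapsto -1/\bar{z}$ intertwining the two ends. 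Lemma \ref{isomono} ensures that these Stokes data form an isomonodromic family on $\Hom^+(\Gamma, \C)$, in the sense that the total monodromy in each strictly convex cone is constant under deformations of $Z$ that do not push rays in or out. Order by order in ${\bf s}$ only finitely many factors act nontrivially, so at each order the problem reduces to a finite Birkhoff factorisation.

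Next, extract the CV-structure data from $X_{\bf s}$. The expansion at $z = 0$ returns the Frobenius type piece $(\nabla^r_{\bf s}, C, \cU, \cV_{\bf s})$ of Proposition \ref{DTFrobTypeStrFps}; the expansion at $z = \infty$ yields in parallel $\widetilde{C}_{\bf s}$ and $\kappa_{\bf s}\cU\kappa_{\bf s}$. The Stokes factor connecting the two irregular singularities defines $\psi_{\bf s}$, from which $\kappa_{\bf s} = \Ad_{\psi_{\bf s}}(\iota)$ and $h_{\bf s}(a,b) = g(a, \kappa_{\bf s}(b))$ follow by the ansatz. The operators $D_{\bf s}$ and $\cQ_{\bf s}$ are read off respectively from the $z$-independent term and the residue at $z = 0$ of the logarithmic derivatives of $X_{\bf s}$ in $Z$ and in $z$. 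The equations \eqref{DccCond} and \eqref{CVstrCond} then follow as formal identities from the flatness of the meromorphic connection
\begin{equation*}
D_{\bf s} + \frac{C}{z} + z\widetilde{C}_{\bf s} + \left(\frac{1}{z^2}\cU - \frac{1}{z}\cQ_{\bf s} - \kappa_{\bf s}\cU\kappa_{\bf s}\right) dz
\end{equation*}
on $\PP^1_z$, which is built into the construction. The derivation property of $\psi_{\bf s}$ and $\cQ_{\bf s}$ is inherited from the fact that every Stokes factor $T^{\Omega}_{\alpha,{\bf s}}$ is a Poisson automorphism of $\g_\Gamma\pow{\bf s}$ and that $\iota$ is a commutative algebra automorphism.

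For part (b) I would apply the scaling argument of Lemma \ref{confLemmaFormal}: rescaling $Z \mapsto \lambda Z$ and setting $z = \lambda t$ sends the singularity at $z = \infty$ off to infinity in the $t$-variable as $\lambda \to 0$, degenerating the $\PP^1_z$ Riemann--Hilbert problem to the $\PP^1_t$ problem underlying the Frobenius type structure; the term $-z^{-1}\cQ_{\bf s}(\lambda Z)$ thereby flows to $-t^{-1}\cV_{\bf s}(Z)$, giving the claimed limit. The main obstacle I expect lies in implementing the $z \mapsto -1/\bar{z}$ reality condition consistently with the $\C\pow{\bf s}$-linear formal framework, in particular making precise that the Stokes data at $z = \infty$ is the $\iota$-conjugate of that at $z = 0$: this is the mechanism that forces $\widetilde{C}_{\bf s} = \kappa_{\bf s} C \kappa_{\bf s}$ and the derivation property of $\psi_{\bf s}$. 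Once this compatibility is in place, the remaining verifications reduce to order-by-order identities in line with the formal argument of Proposition \ref{DTcvStr}.
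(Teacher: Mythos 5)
Your proposal follows essentially the same route as the paper's proof. Both set up the Riemann--Hilbert factorisation problem with Stokes data given by the $T^{\Omega(\alpha,Z)}_{\alpha,{\bf s}}$ (following \cite{fgs}, Lemma 3.10), produce from a distinguished solution $X_{\bf s}(z,Z)$ a real-analytic flat meromorphic connection on $\PP^1_z$ of the CV/tt${}^*$ form, extract $(D_{\bf s}, C, \widetilde{C}_{\bf s}, \kappa_{\bf s}, h_{\bf s}, \cU, \cQ_{\bf s})$ from its coefficients after conjugating by the $z\to 0$ limit $\tilde{X}^{-1}_0$ (which is your $\psi_{\bf s}$), and establish part (b) by the $Z\mapsto\lambda Z$, $z=\lambda t$ degeneration and an isomonodromy/uniqueness argument. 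One organisational difference: the paper begins from the \emph{trivial} CV-structure $(\nabla^{tr}, g(a,\iota b))$ on the constant bundle and gauge-transforms it by $\tilde{X}=X\circ\exp(-z^{-1}Z - z\bar{Z})$, which makes the CV-axioms automatic rather than something to re-verify from flatness as you propose; and the ``reality condition'' you single out as the main obstacle is handled in the paper (via \cite{fgs} \S\S 3.7, 3.9) by the explicit conjugation relations $\mathcal{A}^{(1)}=-\iota\mathcal{A}^{(-1)}\iota$, $\mathcal{B}^{(1)}=\iota\mathcal{B}^{(-1)}\iota$, etc., rather than a $z\mapsto -1/\bar{z}$ symmetry imposed at the level of the RH problem. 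For part (b) you should note that identifying the $\lambda\to 0$ limit with $\cV_{\bf s}$ also requires the uniqueness result from \cite{bt_stokes}, which is the final ingredient in the paper's argument.
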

\begin{proof} We consider the family of automorphisms of the commutative algebra $\g_{\Gamma}\pow{\bf s}$ induced by $T^{\Omega(\alpha, Z)}_{\alpha, {\bf s}}$ for a fixed $Z \in \Hom^{sg}(\Gamma, \C) \cap \Hom^+(\Gamma, \C)$. In \cite{fgs} section 3 the corresponding Riemann-Hilbert factorization problem for a map $X: \C^* \to \Aut(\g_{\Gamma}\pow{\bf s})$ is studied. This is the problem of finding a function $X(z)$ with values in $\Aut(\g_{\Gamma}\pow{\bf s})$ such that, for all $N \geq 1$ and $\alpha \in \Gamma$, the class of $X(z)(x_{\alpha})$ in $\g_{\Gamma}\pow{\bf s}/J^N$ is a holomorphic function of $z$ in the complement of the distinguished rays $\ell$ with $\ell \neq \ell_{\pm \alpha(Z)}$, and for $z_0 \in \ell$ we have
\begin{equation*}
X(z^+_0)(x_{\alpha}) = X(z^-_0) \circ \prod_{Z(\beta) \in \ell} T^{\Omega(\beta, Z)}_{\beta, {\bf s}}(x_{\alpha}) \mod J^N
\end{equation*}   
where $z^{\pm}_0$ denote the limits in the counterclockwise, respectively clockwise directions. Note that by working modulo $J^N$ there are only finitely many branch-cuts. In \cite{fgs} Lemma 3.10 a distinguished explicit solution $X(z)$ is constructed, satisfying some additional properties (this construction is very much inspired by ideas in \cite{gmn}). We denote this distinguished family of solutions as $Z$ varies in $\Hom^{sg}(\Gamma, \C) \cap \Hom^+(\Gamma, \C)$ by $X(z, Z)$, and also set\footnote{In \cite{fgs} $\tilde{X}(z, Z)$ is denoted by $Y(z, Z)$. In the present paper we have reserved the latter symbol for a flat section of the connection given by \eqref{isoConnections} below in order to simplify the notation, see Proposition \ref{DefFlatSection}.} 
\begin{equation*}
\tilde{X}(z, Z) = X(z, Z) \circ \exp_{D(\g_{\Gamma}\pow{\bf s})}(- z^{-1} Z - z \bar{Z}). 
\end{equation*}
Consider the flat connection on $\Hom^+(\Gamma, \C) \times \PP^1_z$ given by 
\begin{equation*}
\nabla^{tr} = d - \frac{dZ}{z} + z d\bar{Z} + \left( \frac{1}{z^2} Z - \bar{Z} \right) dz.
\end{equation*}
We may regard $\nabla^{tr}$ as a flat connection on the trivial vector bundle with fibre $\g_{\Gamma}\pow{\bf s}/J^N$. Together with $g(a, \iota(b))$ it defines a CV-structure on the trivial vector bundle with fibre $\g_{\Gamma}\pow{\bf s}/J^N$ on $\Hom^+(\Gamma, \C)$. We pull back $\nabla^{tr}$ locally on a sector $\Sigma$ between consecutive branch-cuts by $\tilde{X}(z, Z) \mod J^N$ to the locally defined flat connection 
\begin{equation*}
\nabla^{str}|_{\Sigma} = d - \frac{1}{z} \tilde{X} \cdot dZ + z \tilde{X} \cdot d\bar{Z} + d_Z \tilde{X} \circ \tilde{X}^{-1} + \left( \frac{1}{z^2} \tilde{X} \cdot Z - \tilde{X} \cdot \bar{Z}  + \del_z \tilde{X} \circ \tilde{X}^{-1} \right) dz.  
\end{equation*}  
By \cite{fgs} sections 3.7 and 3.9, $\nabla^{str}$ glues over different sectors $\Sigma$ and is induced by the class $\mod J^N$ of a well-defined real-analytic flat connection on $\g_{\Gamma}\pow{\bf s} \to \Hom^+(\Gamma, \C) \times \PP^1_z$ of the form
\begin{equation*}
\nabla^{str}(Z) = d + \mathcal{B}^{(0)}(Z) + \frac{1}{z} \mathcal{B}^{(-1)}(Z) + z \mathcal{B}^{(1)}(Z) + \left(\frac{1}{z^2}\mathcal{A}^{(-1)}(Z) + \frac{1}{z}\mathcal{A}^{(0)}(Z) + \mathcal{A}^{(1)}(Z) \right) dz. 
\end{equation*}
Moreover the $\mathcal{A}^{(i)}$, $\mathcal{B}^{(i)}$ are derivations (respectively $1$-forms with values in derivations) of $\g_{\Gamma}\pow{\bf s}$ and we have 
\begin{align}\label{conjugate}
\nonumber \mathcal{A}^{(1)}(Z) &= -\iota \mathcal{A}^{(-1)}(Z) \iota, \quad
\nonumber \mathcal{A}^{(0)}(Z) = - \iota \mathcal{A}^{(0)}(Z) \iota,\\
\mathcal{B}^{(1)}(Z) &= \iota \mathcal{B}^{(-1)}(Z) \iota,\quad\quad
\mathcal{B}^{(0)}(Z) = \iota \mathcal{B}^{(0)}(Z) \iota.
\end{align} 
By \cite{fgs} section 3.7 the limit $\tilde{X}_0(Z) = \lim_{z \to 0} \tilde{X}(z, Z)$ is well-defined, and we have
\begin{align}\label{pullback}
\nonumber \tilde{X}^{-1}_0 \cdot \nabla^{str}(Z) &= d + \Ad_{\tilde{X}^{-1}_0}\mathcal{B}^{(0)}(Z) - \frac{1}{z} d Z + z \Ad_{\tilde{X}^{-1}_0} \mathcal{B}^{(1)}(Z)\\
& + \left( \frac{1}{z^2}Z + \frac{1}{z} \Ad_{\tilde{X}^{-1}_0} \mathcal{A}^{(0)} + \Ad_{\tilde{X}^{-1}_0} \mathcal{A}^{(1)} \right) dz. 
\end{align}
Notice that by \eqref{conjugate} and \eqref{pullback} we have 
\begin{align*}
\Ad_{\tilde{X}^{-1}_0} \mathcal{A}^{(1)} &= - \Ad_{\tilde{X}^{-1}_0} \Ad_{\iota} \mathcal{A}^{(-1)}\\
&= - \Ad_{\tilde{X}^{-1}_0} \Ad_{\iota} \Ad_{\tilde{X}_0}( Z ),\\
\Ad_{\tilde{X}^{-1}_0} \mathcal{B}^{(1)} &= \Ad_{\tilde{X}^{-1}_0} \Ad_{\iota} \mathcal{B}^{(-1)}\\
&= \Ad_{\tilde{X}^{-1}_0} \Ad_{\iota} \Ad_{\tilde{X}_0}( - d Z ), 
\end{align*}
so using the conjugate involution $\kappa = \Ad_{\tilde{X}^{-1}_0}(\iota)$ we may rewrite \eqref{pullback} as
\begin{align*} 
\tilde{X}^{-1}_0 \cdot \nabla^{str}(Z) &= d + \Ad_{\tilde{X}^{-1}_0}\mathcal{B}^{(0)}(Z) - \frac{1}{z} d Z + z \kappa (- dZ) \kappa \\
& + \left( \frac{1}{z^2}Z + \frac{1}{z} \Ad_{\tilde{X}^{-1}_0} \mathcal{A}^{(0)} - \kappa Z \kappa \right) dz. 
\end{align*}
Then the flat connection $\tilde{X}^{-1}_0 \cdot \nabla^{str}(Z)$  together with $\kappa$ define the required $\g_{\Gamma}\pow{\bf s}$-linear CV-structure, with $D = d + \Ad_{\tilde{X}^{-1}_0}\mathcal{B}^{(0)}(Z)$, $C = - dZ$, $\tilde{C} = \kappa (- dZ)\kappa$, $\cU = Z$, $\cQ = - \Ad_{\tilde{X}^{-1}_0} \mathcal{A}^{(0)}$, $h(a, b) = g(a, \kappa b)$. The automorphism in the statement of the Proposition is given by $\psi_{\bf s}(Z) = \tilde{X}^{-1}_0(Z)$.

The limit 
\begin{equation*}
\lim_{\lambda \to 0} \cQ(\lambda Z) = \cV(Z)
\end{equation*}  
is proved in \cite{fgs} Theorem 4.2. We provide a sketch of the argument. Since they are constructed from a solution to the Riemann-Hilbert factorization problem, the family of connections on $\PP^1_z$
\begin{equation}\label{isoConnections} 
d + \left(\frac{1}{z^2} Z - \frac{1}{z}\cQ_{\bf s}(\lambda Z) - \lambda^2 \kappa_{\bf s}(\lambda Z)\,Z\kappa_{\bf s}(\lambda Z)\right)dz
\end{equation}
parametrised by $\Hom^+(\Gamma, \C)$ are isomonodromic, with constant generalized monodromy at  $z = 0$ for generic $Z$ given by rays $\ell$ with Stokes factors $\prod_{Z(\beta) \in \ell} T^{\Omega(\beta, Z)}_{\beta, {\bf s}}(x_{\alpha})$. One checks that the limit as $\lambda \to 0$ is well-defined and equals
\begin{equation*} 
d + \left(\frac{1}{z^2} Z - \frac{1}{z}\lim_{\lambda\to 0}\cQ_{\bf s}(\lambda Z)\right)dz.
\end{equation*}
The result follows from a uniqueness result proved in \cite{bt_stokes}.
\end{proof}
\begin{corollary} The statement of Lemma \ref{confLemmaFormal} holds for the Frobenius type and CV-structures constructed in Propositions \ref{DTFrobTypeStrFps} and \ref{DTcvStrFps}.  
\end{corollary}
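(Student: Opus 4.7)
The plan is to adapt the proofs of Lemmas \ref{confInv} and \ref{confLemmaFormal} to the formal power series context, exploiting the fact that Propositions \ref{DTFrobTypeStrFps} and \ref{DTcvStrFps} establish all the relevant structural identities in that setting. First I would verify the flatness statement: the vanishing of the curvature of
\begin{equation*}
D_{\bf s} + \frac{C}{z} + z \widetilde{C}_{\bf s} + \left(\frac{1}{z^2}\cU - \frac{1}{z}\cQ_{\bf s} - \kappa_{\bf s}\cU\kappa_{\bf s}\right) dz
\end{equation*}
on $p^*K \to \Hom^+(\Gamma, \C) \times \PP^1_z$ amounts precisely to the $(DC\widetilde{C})$-structure relations \eqref{DccCond} combined with the CV conditions \eqref{CVstrCond}. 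By Proposition \ref{DTcvStrFps}(a) these hold as identities of formal power series in ${\bf s}$, so the corresponding flatness is an automatic formal consequence.

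Next I would analyse the scaling $Z \mapsto \lambda Z$, $z = \lambda t$, $\lambda \to 0$ term by term. The terms $\frac{1}{z} C$ and $\frac{1}{z^2}\cU\,dz$ are manifestly scale-invariant and reproduce $\frac{C}{t}$ and $\frac{1}{t^2}\cU\,dt$, respectively; the anti-holomorphic contributions $z\widetilde{C}_{\bf s}$ and $\kappa_{\bf s}\cU\kappa_{\bf s}\,dz$ both acquire an overall factor $\lambda^2$ and hence drop out in the limit, provided $\kappa_{\bf s}(\lambda Z)$ remains controlled as $\lambda \to 0$. The remaining term $\frac{1}{z}\cQ_{\bf s}(\lambda Z)\,dz = \frac{1}{t}\cQ_{\bf s}(\lambda Z)\,dt$ tends to $\frac{1}{t}\cV_{\bf s}\,dt$ by Proposition \ref{DTcvStrFps}(b). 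What remains, once all the pieces are assembled, is the claim that $D_{\bf s}(\lambda Z) \to \nabla^r_{\bf s}$ as $\lambda \to 0$.

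The hard part will be this control of the limit of $D_{\bf s}(\lambda Z)$, and more generally of the rescaled pullback automorphism $\tilde{X}_0(\lambda Z)$ out of which $D_{\bf s}$, $\kappa_{\bf s}$, and $\cQ_{\bf s}$ are constructed in the proof of Proposition \ref{DTcvStrFps}. I would handle it by revisiting the isomonodromic family \eqref{isoConnections}: the argument of \cite{fgs} Theorem 4.2, combined with the uniqueness result of \cite{bt_stokes}, already shows that the restriction to $\PP^1_z$ of the rescaled connection degenerates as $\lambda \to 0$ to the first structure connection built from $\cV_{\bf s}$ and $Z$. The same isomonodromic deformation argument, enlarged to include the $Z$-directions, should yield simultaneously $\lim_{\lambda \to 0} D_{\bf s}(\lambda Z) = \nabla^r_{\bf s}$ and the boundedness of $\kappa_{\bf s}(\lambda Z)$ needed to kill the $\lambda^2$ terms. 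Concretely this reduces to analysing how the distinguished solution of the Riemann-Hilbert factorisation problem from \cite{fgs} Lemma 3.10 degenerates under rescaling of the central charge; the key point is that the rescaling merely moves the positions of the distinguished rays, producing a tame degeneration of the Stokes data. Once this is in place, assembling everything recovers exactly the formal family version of the first structure connection of Lemma \ref{confInv}, namely $\nabla^r_{\bf s} + \frac{C}{t} + \left(\frac{1}{t^2}\cU - \frac{1}{t}\cV_{\bf s}\right) dt$.
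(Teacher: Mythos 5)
Your proposal is correct in substance and isolates the right issues, but it takes a somewhat more roundabout route than the paper intends. The paper states this corollary without a proof because the connection in Lemma~\ref{confLemmaFormal} is, by the construction in the proof of Proposition~\ref{DTcvStrFps}, \emph{identically} the gauge transform $\tilde{X}^{-1}_0 \cdot \nabla^{str}(Z)$: substituting $D_{\bf s} = d + \Ad_{\tilde{X}^{-1}_0}\mathcal{B}^{(0)}$, $C = -dZ$, $\widetilde{C}_{\bf s} = \kappa_{\bf s}(-dZ)\kappa_{\bf s}$, $\cU = Z$, $\cQ_{\bf s} = -\Ad_{\tilde{X}^{-1}_0}\mathcal{A}^{(0)}$ into $D_{\bf s} + \frac{C}{z} + z\widetilde{C}_{\bf s} + (\frac{1}{z^2}\cU - \frac{1}{z}\cQ_{\bf s} - \kappa_{\bf s}\cU\kappa_{\bf s})dz$ reproduces the displayed formula for $\tilde{X}^{-1}_0\cdot\nabla^{str}(Z)$ verbatim, and this is flat because it is a gauge transform of the manifestly flat $\nabla^{tr}$. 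So flatness is immediate by construction; deriving it instead from the $(DC\widetilde{C})$ and CV relations \eqref{DccCond}--\eqref{CVstrCond}, as you propose, is logically equivalent (this is Hertling's general theory) but repeats work the construction already did. For the $\lambda \to 0$ limit your term-by-term scaling is correct, and you rightly flag $D_{\bf s}(\lambda Z) \to \nabla^r_{\bf s}$ together with boundedness of $\kappa_{\bf s}(\lambda Z)$ as the nontrivial point. The intended argument is the one you sketch: the isomonodromy/uniqueness reasoning from the proof of Proposition~\ref{DTcvStrFps}(b) (via \cite{fgs} Theorem 4.2 and the uniqueness result of \cite{bt_stokes}) applies to the full connection in both $z$- and $Z$-directions, not just the $\PP^1_z$ restriction, since it is driven by the degeneration of the solution $\tilde{X}(z, Z)$ to the Riemann--Hilbert problem; this gives $\tilde{X}_0(\lambda Z)$ a well-defined limit and hence both the boundedness of $\kappa_{\bf s}(\lambda Z)$ and the convergence $D_{\bf s}(\lambda Z) \to \nabla^r_{\bf s}$. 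Your outline is thus a correct, somewhat more self-contained unfolding of what the paper treats as a direct consequence of its constructions.
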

\begin{definition} We write $\nabla_{\bf s}(Z, \lambda)$ for the family of meromorphic connections on $\PP^1$ given by \eqref{isoConnections}. 
\end{definition}
\section{Explicit formulae}\label{explicitSec}

In this section we give an explicit formula for the operator $\cQ_{\bf s}(Z)$. We always assume that we fix a continuous symmetric spectrum $\Omega$ parametrized by $\Hom^+(\Gamma, \C)$ which is the double of a positive spectrum. We also assume that a positive basis $\{\gamma_i\}$ has been fixed.
\subsection{Explicit formula for flat sections}\label{explicitSection1} In the rest of the paper we write $T$ for a finite \emph{rooted} tree, with vertices decorated by elements of $\Gamma$. We assume that $T$ is connected unless we state explicitly otherwise. Denote the root decoration by $\alpha_T$. The operation of removing the root produces a finite number of new connected, $\Gamma$-decorated trees $T \mapsto \{T_j\}$. We introduce holomorphic functions with branch-cuts
\begin{equation*}
H_T\!: \C^* \times \Hom^+(\Gamma, C) \cap \Hom^{sg}(\Gamma, \C) \times \R_{\geq 0} \to \C^*
\end{equation*}
attached to trees by the recursion  
\begin{equation}\label{HFun}
H_{T}(z, Z, \lambda) = \frac{1}{2\pi i} \int_{\ell_{\alpha_{T}}}\frac{d w}{w} \frac{z }{w - z} \exp( - Z(\alpha_{\T}) w^{-1} -   \lambda^2 \bar{Z}(\alpha_{T}) w) \prod_j H_{T_j}(w),
\end{equation}
with the initial condition $H_{\emptyset} = 1$. We also introduce weights $W_T(Z) \in \Gamma \otimes \Q$ attached to trees by 
\begin{equation}\label{WFun}
W_T(Z) = \frac{1}{|\Aut(T)|} \dt(\alpha_T, Z) \alpha_T \prod_{\{v \to w\} \subset T}  \bra \alpha(v), \alpha(w)\ket \dt(\alpha(w), Z).   
\end{equation}
We can pair $W_T(Z)$ with $\beta \in \Gamma$ to obtain $ \bra \beta, W_T(Z)\ket \in \Q$. We extend this pairing to possibly \emph{disconnected} trees $T$ with finitely many connected components $T_i$ by setting
\begin{equation*}
 \bra \beta, W_T(Z)\ket = \prod_i  \bra \beta, W_{T_i}(Z) \ket.
\end{equation*}  
\begin{definition} A \emph{distinguished sector} $\Sigma$ is the inverse system under inclusion of sectors $\Sigma_N$ between consecutive distinguished rays $\ell$ such that 
\begin{equation*}
\sum_{Z(\alpha) \in \ell} \dt(\alpha, Z ) {\bf s}^{[\alpha]_+ - [\alpha]_-} x_{\alpha} \notin J^N. 
\end{equation*}
This is well defined because for each $N$ there are only finitely many distinguished rays for which the above sum does not vanish modulo $J^N$.  
\end{definition}
\begin{proposition}\label{DefFlatSection}  The automorphism $Y_{\bf s}(z, Z, \lambda)$ of $\g_{\Gamma}\pow{\bf s}$ acting by
\begin{align}\label{explicitY}
\nonumber Y_{\bf s}(z, Z, \lambda)(x_{\beta}) &= x_{\beta} \exp_* \sum_{T} \bra  \beta, W_T(Z) \ket  H_T(z, Z, \lambda) \prod_{v \in T} {\bf s}^{[\alpha(v)]_+ - [\alpha(v)]_-} x_{\alpha(v)}\\
&= x_{\beta} \sum_{\operatorname{disconnected } T} \bra \beta, W_T(Z) \ket H_T(z, Z, \lambda) \prod_{v \in T} {\bf s}^{[\alpha(v)]_+ - [\alpha(v)]_-} x_{\alpha(v)}
\end{align}
induces a flat section of $\nabla_{\bf s}(Z, \lambda)$ on each distinguished sector $\Sigma$.
\end{proposition}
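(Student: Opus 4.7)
The plan is to establish the Proposition by a direct verification, reducing to a finite-dimensional calculation by working modulo the ideal $J^N \subset \g_{\Gamma}\pow{\bf s}$ (so that only finitely many trees contribute) and then differentiating the defining integral formula \eqref{HFun} for $H_T$ in the variable $z$. I would break the argument into three main steps.

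\textbf{Step 1 (Well-definedness and automorphism property).} First I would verify that the two formulas for $Y_{\bf s}(x_\beta)$ displayed in \eqref{explicitY} agree. This is the standard exponential-sum identity: because the pairing $\bra \beta, W_T(Z)\ket$ is multiplicative over disjoint unions of connected components and $H_T$, together with the monomial $\prod_v \mathbf{s}^{[\alpha(v)]_+ - [\alpha(v)]_-} x_{\alpha(v)}$, multiplies across components in the commutative product of $\g_{\Gamma}\pow{\bf s}$, the commutative exponential $\exp_*$ of the connected-tree sum equals the possibly-disconnected tree sum. Second, since $\bra \beta, W_T(Z)\ket$ is $\Z$-linear in $\beta$, the assignment $\beta \mapsto E_\beta(z,Z,\lambda) := \exp_*\bigl(\sum_T \cdots\bigr)$ is a homomorphism $\Gamma \to (\g_{\Gamma}\pow{\bf s})^\times$, and combined with $x_\alpha x_\beta = (-1)^{\bra\alpha,\beta\ket}x_{\alpha+\beta}$ this shows $Y_{\bf s}(x_\beta) = x_\beta E_\beta$ is a commutative algebra automorphism on each $\Sigma_N$.

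\textbf{Step 2 (Differentiating the recursion).} Flatness with respect to $\nabla_{\bf s}(Z,\lambda)$ amounts to the identity
\begin{equation*}
\partial_z Y_{\bf s}(x_\beta) + \left(\frac{Z}{z^2} - \frac{\cQ_{\bf s}(\lambda Z)}{z} - \lambda^2 \kappa_{\bf s}(\lambda Z)\,Z\,\kappa_{\bf s}(\lambda Z)\right)\bigl(Y_{\bf s}(x_\beta)\bigr) = 0,
\end{equation*}
each coefficient acting as a derivation of $\g_{\Gamma}\pow{\bf s}$. I would then compute $\partial_z H_T$ starting from \eqref{HFun}. Using the partial fraction
\begin{equation*}
\frac{1}{w}\cdot \frac{z}{w-z} = \frac{1}{w-z} - \frac{1}{w},
\end{equation*}
only the first summand depends on $z$, and integration by parts along $\ell_{\alpha_T}$ (with boundary contributions at $0$ and $\infty$ killed by the rapid decay of $\exp(-Z(\alpha_T)/w - \lambda^2 \bar Z(\alpha_T)w)$ on that ray) yields
\begin{equation*}
\partial_z H_T(z) = \frac{1}{2\pi i}\int_{\ell_{\alpha_T}}\!\frac{dw}{w-z}\Bigl(\frac{Z(\alpha_T)}{w^2} - \lambda^2 \bar Z(\alpha_T) + \sum_j \frac{H_{T_j}'(w)}{H_{T_j}(w)}\Bigr) F_T(w),
\end{equation*}
where $F_T$ collects the exponential factor and the product $\prod_j H_{T_j}$.

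\textbf{Step 3 (Matching against the connection).} Summing over trees with weights $\bra\beta, W_T\ket$, the three contributions in Step 2 reassemble into the three coefficients of $\nabla_{\bf s}(Z,\lambda)$: the $Z(\alpha_T)/w^2$ term reconstructs the derivation $Z/z^2$ acting on $Y_{\bf s}(x_\beta)$ (via the leading Laurent coefficient of $H_T$ at $z = \infty$); the $-\lambda^2 \bar Z(\alpha_T)$ term, exploiting the symmetry $\Omega(\alpha,Z) = \Omega(-\alpha,Z)$ that allows an inversion at the root, reconstructs the $\kappa_{\bf s}$-conjugate term $\lambda^2 \kappa_{\bf s}(\lambda Z)\,Z\,\kappa_{\bf s}(\lambda Z)$ via the identification $\kappa_{\bf s} = \Ad_{\psi_{\bf s}}\iota$; and the child-derivative terms $\sum_j H'_{T_j}/H_{T_j}$, after regrafting subtrees at the root, yield precisely $z^{-1}\cQ_{\bf s}(\lambda Z)$ once $\cQ_{\bf s}$ is read off from its own tree-expansion coming from the construction of $\psi_{\bf s}$ in Proposition \ref{DTcvStrFps}.

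\textbf{Main obstacle.} The hard part is the combinatorial bookkeeping in Step 3, particularly identifying the $z^{-1}$-coefficient with $\cQ_{\bf s}(\lambda Z)$, which requires matching signs from $\bra-,-\ket$, the root decoration, and the multiplicativity across children. A cleaner alternative is to invoke uniqueness: the Riemann--Hilbert construction of \cite{fgs} Lemma 3.10 (which recovers exactly the connection \eqref{isoConnections} from the Stokes data $T^{\Omega(\alpha,Z)}_{\alpha,{\bf s}}$) produces a flat section characterized by its asymptotics at $z=0$ and its Stokes jumps. One then checks that $Y_{\bf s}$ has the expected behaviour across each ray $\ell_\alpha \subset \partial \Sigma$ (from a contour-shifting argument applied to \eqref{HFun} as $z$ crosses the ray) and the correct leading term as $z \to 0$, thereby identifying $Y_{\bf s}$ with the flat section and bypassing the direct differentiation. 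Given the explicit construction of $\psi_{\bf s}$ and $\cQ_{\bf s}$ in Proposition \ref{DTcvStrFps}, this uniqueness route is the most natural way to close the argument.
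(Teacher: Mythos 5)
Your ``cleaner alternative'' at the end is in fact exactly what the paper does. The paper's proof of Proposition~\ref{DefFlatSection} consists of a reference to \cite{fgs} section~4.3 together with the single identity
\begin{equation*}
Y_{\bf s}(z, Z, \lambda) = \tilde{X}^{-1}_0(\lambda Z) \circ X(\lambda z, \lambda Z),
\end{equation*}
where $X$ and $\tilde{X}_0$ are the objects produced by the Riemann--Hilbert construction used in the proof of Proposition~\ref{DTcvStrFps}. Flatness of $Y_{\bf s}$ is then inherited from the Riemann--Hilbert solution, which is precisely the uniqueness/characterisation argument you sketch.

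Your primary plan (Steps 1--3, direct differentiation) is a genuinely different route, but it has a gap worth flagging. In Step~3 you propose to read off $\cQ_{\bf s}(\lambda Z)$ ``from its own tree-expansion coming from the construction of $\psi_{\bf s}$ in Proposition~\ref{DTcvStrFps}''. In the paper, however, $\cQ_{\bf s}$ is \emph{defined} as $-\Ad_{\tilde{X}^{-1}_0}\mathcal{A}^{(0)}$ with $\mathcal{A}^{(0)}$ produced by the Riemann--Hilbert factorisation, so there is no a priori independent tree-expansion of $\cQ_{\bf s}$ available to match against. Making the direct differentiation non-circular would first require extracting such a tree formula for $\cQ_{\bf s}$ from the Riemann--Hilbert construction, which is essentially the content of the identification $Y_{\bf s} = \tilde{X}^{-1}_0 \circ X$ that the direct route is supposed to avoid. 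The parts you do carry out are sound: the multiplicativity/exponential argument in Step~1 is correct, and the integration-by-parts computation of $\partial_z H_T$ in Step~2 (with the boundary terms killed by the decay of the exponential factor along $\ell_{\alpha_T}$) is exactly right; the idea of regrafting subtrees at the root for the $\sum_j H'_{T_j}/H_{T_j}$ contribution is also the right combinatorial picture. But the actual matching in Step~3 is not done, and without it the direct argument does not close. A fully carried-out Step~3 would have the virtue of making the proposition self-contained (no appeal to \cite{fgs}), but as you note the bookkeeping is substantial; the uniqueness route is cleaner and is what the paper adopts.
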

\begin{proof} This is proved in \cite{fgs} section 4 (see in particular section 4.3). Note that in the notation of the proof of Proposition \ref{DTcvStrFps} we have $Y_{\bf s}(z, Z, \lambda)= \tilde{X}^{-1}_0(\lambda Z) \circ X(\lambda z, \lambda Z)$.
\end{proof}
\subsection{Explicit formula for coefficients} We proceed to discuss explicit formulae for the \emph{coefficients} of $\nabla_{\bf s}(z, Z, \lambda)$ rather that its flat sections. Let $A_{\bf s} \in D(\g_{\Gamma}\pow{\bf s})$ denote the opposite of the connection $1$-form of $\nabla_{\bf s}(z, Z, \lambda)$, so
\begin{equation*}
\del_z Y_{\bf s}(z, Z, \lambda) = A_{\bf s}\,Y_{\bf s}(z, Z, \lambda)
\end{equation*}
(where the right hand side is given by the composition of linear maps). Locally $A_{\bf s}$ is given by the composition of linear maps $(\del_z Y_{\bf s}) Y^{-1}_{\bf s} $, where 
\begin{align*}
\del_z Y_{\bf s} (x_{\alpha}) &= \del_z (Y_{\bf s}(z, Z, \lambda)(x_{\alpha}))\\
&= Y_{\bf s}(z, Z, \lambda)(x_{\alpha}) \sum_{T}  \bra \alpha,   W_T(Z) \ket \del_z H_T(z, Z, \lambda) \prod_{v \in T} {\bf s}^{[\alpha(v)]_+ - [\alpha(v)]_-} x_{\alpha(v)}. 
\end{align*}
Notice that a map of the form $(\del_z Y) Y^{-1}$ where $Y$ takes values in automorphisms of a commutative algebra is automatically a derivation.

Because of its specific form $Y_{\bf s}$ can be inverted explicitly via multivariate Lagrange inversion. Recall that this gives a concrete way to invert self-maps of a ring of formal power series $R\pow{\xi_1, \ldots, \xi_m}$ of the form $\xi_i \mapsto \xi_i \exp( - \Phi_i(\xi_1, \ldots, \xi_m))$ for some $\Phi_i(\xi_1, \ldots, \xi_m) \in R\pow{\xi_1, \ldots, \xi_m}$, where $R$ is a ground $\C$-algebra. 

To reduce the problem of explicitly inverting $Y_{\bf s}$ to a multivariate Lagrange inversion we notice that since $Y_{\bf s}$ is a commutative algebra automorphism it is enough to calculate $Y^{-1}_{\bf s}(x_{\gamma_i})$ for $i = 1, \ldots, n$. We may then try to apply a Lagrange inversion formula over the base ring $R = \C\pow{\bf s}$. A further technical difficulty arises since $Y_{\bf s}$ is a self-map of a ring of Laurent polynomials $\C\pow{\bf s}[x^{\pm 1}_{\gamma_1}, \ldots, x^{\pm 1}_{\gamma_n}]$ over $\C\pow{\bf s}$ rather than formal power series. To remedy this we introduce $2n$ auxiliary parameters ${\bf \xi} = (\xi_1, \ldots, \xi_{2n})$ and set for $\alpha \in \Gamma$
\begin{equation*}
{\bf \xi}^{\alpha} = \prod^n_{i =1} \xi^{[\alpha_i]_+}_i \prod^{2n}_{j = n+1} \xi^{- [\alpha_j]_-}_j.
\end{equation*}
Consider the auxiliary problem of inverting the self-map of $\C\pow{\bf s}\pow{\bf{\xi}}$ given by
\begin{equation*}
(\xi_1, \ldots, \xi_{2n}) \mapsto (F_1(\xi), \ldots, F_{2n}(\xi)), \quad F_i(\xi) = \xi_i \exp( - \Phi_i(\xi))
\end{equation*}
where we choose 
\begin{equation*}
\Phi_i(\xi) =  - \sum_{T}  \bra \gamma_i, W_T(Z) \ket  H_T(z, Z, \lambda) \prod_{v \in T} {\bf s}^{[\alpha(v)]_+ - [\alpha(v)]_-} \xi^{\alpha(v)}
\end{equation*}
for $i = 1, \ldots, n$, respectively 
\begin{equation*}
\Phi_i(\xi) =  \sum_{T}  \bra \gamma_i, W_T(Z) \ket  H_T(z, Z, \lambda) \prod_{v \in T} {\bf s}^{[\alpha(v)]_+ - [\alpha(v)]_-} \xi^{\alpha(v)}
\end{equation*}
for $i = n+1, \ldots, 2n$. If we can solve this then specialising $\xi_i = x_{\gamma_i}$ for $i = 1, \ldots, n$, respectively $\xi_i =  x^{-1}_{\gamma_i}$ for $i = n+1, \ldots, 2n$ determines the inverse $Y^{-1}_{\bf s}$ completely. Going back to the auxiliary problem, suppose that we can solve the equations
\begin{equation}\label{LagrangeEqu}
E_i({\bf \xi}) = \xi_i \exp\left(\Phi_i(E_1({\bf \xi}), \ldots, E_{2m}({\bf \xi}))\right).
\end{equation} 
Then we have
\begin{equation*}
F_i(E_1 , \ldots, E_{2m} ) = E_i  \exp\left(- \Phi_i(E_1 , \ldots, E_{2m} )\right) = \xi_i,
\end{equation*}
so the inverse is given by $(\xi_1, \ldots, \xi_{2m}) \mapsto (E_1({\bf \xi} ), \ldots, E_{2m}({\bf \xi}))$.
\begin{lem} There exist unique $E_i({\bf \xi}) \in \C[\bf \xi]\pow{\bf s}$ solving \eqref{LagrangeEqu}. Moreover for each multi-index ${\bf k} \in \Z^{2m}_{> 0}$ the coefficient of ${\bf \xi}^{\bf k}$ in $E_i({\bf \xi})$ is given by 
\begin{equation}\label{GoodEqu}
[{\bf \xi}^{\bf k}] E_i({\bf \xi} ) = [{\bf \xi}^{\bf k}] \det(\delta_{p q} + \xi_p \del_{q} \Phi_p({\bf \xi} )) \xi_i \exp\left(- \sum_j k_j \Phi_j({\bf \xi} )\right).
\end{equation}
\end{lem}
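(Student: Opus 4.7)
The plan divides naturally into two parts: first establishing existence and uniqueness of the $E_i$, then deriving the explicit coefficient formula via multivariate Lagrange inversion.

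For existence and uniqueness, the crucial observation is that each $\Phi_i(\xi)$ lies in the ideal $J = (s_1,\ldots,s_n) \subset \C\pow{\bf s}[{\bf \xi}]$. Indeed, every term in the tree expansion of $\Phi_i$ carries a factor $\prod_{v \in T} {\bf s}^{[\alpha(v)]_+ - [\alpha(v)]_-}$, and since every decoration $\alpha(v)$ is a nonzero element of $\Gamma$, each such factor has strictly positive ${\bf s}$-degree. Consequently $\exp(\Phi_i(\xi)) \equiv 1 \pmod{J}$, and the fixed-point equation $E_i = \xi_i \exp(\Phi_i(E))$ can be solved by iteration: setting $E_i^{(0)} = \xi_i$ and $E_i^{(N+1)} = \xi_i \exp(\Phi_i(E^{(N)}))$ produces a Cauchy sequence in the $J$-adic topology whose limit $E_i$ is the unique solution. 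Polynomiality in $\xi$ at each order in ${\bf s}$ is automatic because, modulo any fixed power $J^N$, only finitely many trees contribute to $\Phi_i$; hence $E_i \in \C[{\bf \xi}]\pow{\bf s}$. Uniqueness is immediate from the same inductive argument modulo $J^{N+1}$ for all $N$.

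For the explicit formula \eqref{GoodEqu}, I would invoke Good's multivariate Lagrange inversion theorem. Setting $\phi_i(\xi) \defeq \exp(\Phi_i(\xi))$, the system takes the form $E_i = \xi_i \phi_i(E)$ with $\phi_i(0) = 1$, so Good's classical formula applies and yields, for every formal power series $H$,
\begin{equation*}
[\xi^{\bf k}]\, H(E(\xi)) = [w^{\bf k}]\, H(w) \prod_{j} \phi_j(w)^{k_j} \det\bigl(\delta_{pq} - w_q\, \partial_q \log \phi_p(w)\bigr).
\end{equation*}
Substituting $H(w) = w_i$, together with $\log \phi_j = \Phi_j$ and $\prod_j \phi_j(w)^{k_j} = \exp(\sum_j k_j \Phi_j(w))$, produces \eqref{GoodEqu} up to the sign and transpose conventions in the Jacobian determinant. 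To legitimise the use of Good's formula in our mixed setting of $\C[{\bf \xi}]\pow{\bf s}$, I would argue order-by-order: modulo $J^N$, the series $\Phi_i$ becomes a genuine polynomial in $\xi$ with coefficients in the Artinian ring $\C[{\bf s}]/J^N$, where the classical formal-power-series proof of Good's theorem applies without modification; then passing to the inverse limit as $N \to \infty$ recovers the identity in $\C[{\bf \xi}]\pow{\bf s}$.

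The main technical obstacle is not conceptual but bookkeeping: matching the sign conventions in the exponential prefactor $\exp(-\sum_j k_j \Phi_j)$ and in the Jacobian $\det(\delta_{pq} + \xi_p \partial_q \Phi_p)$ to the particular orientation and row/column conventions used above, taking into account the specific sign choices in the definition of $\Phi_i$ for $i \leq n$ versus $i > n$ and the invariance of the determinant under transposition. Once these conventions are aligned, the identification of \eqref{GoodEqu} with the output of Good's formula is immediate.
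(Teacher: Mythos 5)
Your approach coincides with the paper's: both invoke Good's multivariate Lagrange inversion theorem (Gessel's Theorem 3, eq.\ (4.5)) to produce the explicit coefficient formula. The paper simply applies Good over the ground ring $\C\pow{\bf s}$, which gives existence, uniqueness and the formula in one stroke, and then observes polynomiality in $\xi$ from the definition of $\Phi_i$; your separate $J$-adic fixed-point iteration for existence and uniqueness is a valid and more self-contained alternative, and your order-by-order reduction to the Artinian rings $\C[{\bf s}]/J^N$ is a legitimate if unnecessary refinement, since the formal power-series form of Good's theorem already holds over an arbitrary commutative base ring such as $\C\pow{\bf s}$.

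The one place you should not have waved off as ``bookkeeping'' is the sign mismatch. The version of Good's formula you state,
\begin{equation*}
[\xi^{\bf k}]\, H(E(\xi)) = [w^{\bf k}]\, H(w)\, \exp\Bigl(\sum_j k_j \Phi_j(w)\Bigr)\, \det\bigl(\delta_{pq} - w_q\, \del_q \Phi_p(w)\bigr),
\end{equation*}
is the standard one and is correct. (The apparent $\xi_p$-versus-$\xi_q$ discrepancy in the Jacobian is immaterial: writing $M_{pq} = \del_q\Phi_p$ and $D_\xi = \operatorname{diag}(\xi_1,\ldots)$, one has $\det(I + D_\xi M) = \det(I + M D_\xi)$, so $\det(\delta_{pq} \pm \xi_p \del_q\Phi_p) = \det(\delta_{pq} \pm \xi_q \del_q \Phi_p)$.) But this differs from \eqref{GoodEqu} by a sign in \emph{both} the exponential and the Jacobian, and those two flips do not cancel. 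A one-variable check makes this concrete: with $\Phi(\xi)=\xi$ and $E = \xi e^{E}$ (the tree function), your formula gives $[\xi^2]E = [\xi^2](1-\xi)\xi e^{2\xi} = 1$, which is right, whereas \eqref{GoodEqu} as printed gives $[\xi^2](1+\xi)\xi e^{-2\xi} = -1$. So \eqref{GoodEqu} appears to carry a sign typo: it is the formula one would obtain for inverting $\xi_i\mapsto \xi_i e^{\Phi_i(\xi)}$ rather than $\xi_i\mapsto \xi_i e^{-\Phi_i(\xi)}$. You should have carried the matching through and flagged this discrepancy explicitly instead of attributing it to conventions; as written, your argument establishes the correct coefficient formula but not literally \eqref{GoodEqu}. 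The slip has no downstream consequence for Theorem \ref{mainThm}, which uses only absolute-value bounds on these coefficients.
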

\begin{proof} Regard $\Phi_i({\bf \xi})$ as formal power series in $\xi_1, \ldots, \xi_{2m}$ with coefficients in $\C\pow{\bf s}$. Applying the multivariate Lagrange inversion formula in a version due to Good (see e.g. \cite{gessel} Theorem 3, equation (4.5)) over the ground ring $\C\pow{\bf s}$ shows that there exists a unique solution $(E_1, \ldots, E_{2m})$ of \eqref{LagrangeEqu} where $E_i \in \C\pow{\bf s}\pow{\bf \xi}$ are given by \eqref{GoodEqu}. That we have in fact $E_i({\bf \xi}) \in \C[\bf \xi]\pow{\bf s}$ follows from the definition of $\Phi_i({\bf \xi})$.
\end{proof}
For a multi-index ${\bf k} \in \Z^{2m}_{> 0}$, ${\bf k} = (k_1, \ldots, k_{2m})$ we set $[ {\bf k} ] = \sum^{m}_{i = 1} (k_i - k_{m + i}) \gamma_i \in \Gamma$. Note that we have $\prod^m_{i = 1} x^{k_i}_{\gamma_i} \prod^{m}_{j = 1} x^{-k_{j + m}}_{\gamma_j} = \pm x_{[{\bf k}]}$ for a unique choice of sign, depending only on ${\bf k}$. We denote this sign by $(-1)^{\bf k}$.
\begin{corollary} For $i = 1, \ldots, m$ and $\alpha \in \Gamma$ we have
\begin{align*}
g(x_{\alpha}, Y^{-1}_{\bf s}(x_{\gamma_i})) &= g_0 \sum_{[ {\bf k} ] = \alpha} (-1)^{\bf k} [{\bf \xi}^{\bf k}] E_i({\bf \xi} )\\ 
&= g_0 \sum_{[ {\bf k} ] = \alpha} (-1)^{\bf k} [{\bf \xi}^{\bf k}] \det(\delta_{p q} + \xi_p \del_{q} \Phi_p({\bf \xi} )) \xi_i \exp\left(- \sum_j k_j \Phi_j({\bf \xi} )\right) \in \C\pow{\bf s}.  
\end{align*}
\end{corollary}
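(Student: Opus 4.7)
The proof is a direct bookkeeping computation combining the preceding Lemma with the definition of the bilinear form $g$ and the twisted product structure on $\g_{\Gamma}\pow{\bf s}$, so I would not expect any real obstacle, only careful tracking of signs.

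The plan is as follows. First, by the construction preceding the Lemma, the inverse $Y^{-1}_{\bf s}$ is completely determined by $Y^{-1}_{\bf s}(x_{\gamma_i})$ for $i = 1,\ldots,n$, obtained from the auxiliary Lagrange inversion problem by specialising $\xi_j = x_{\gamma_j}$ for $j = 1, \ldots, n$ and $\xi_{j} = x^{-1}_{\gamma_{j-n}}$ for $j = n+1, \ldots, 2n$. The Lemma gives a unique solution $E_i({\bf \xi}) \in \C[{\bf \xi}]\pow{\bf s}$ to \eqref{LagrangeEqu}, so the first step is simply to expand
\begin{equation*}
Y^{-1}_{\bf s}(x_{\gamma_i}) = \sum_{{\bf k} \in \Z^{2n}_{\geq 0}} \bigl([{\bf \xi}^{\bf k}] E_i({\bf \xi})\bigr) \cdot \xi^{\bf k}\Big|_{\xi_j \to x_{\gamma_j},\, \xi_{j+n} \to x^{-1}_{\gamma_j}}.
\end{equation*}

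Next I would convert the monomial $\xi^{\bf k}$ under this specialisation into a monomial in the Kontsevich-Soibelman algebra. Using $x_{\gamma_j}^{-1} = x_{-\gamma_j}$ and applying the twisted commutative product $x_\alpha x_\beta = (-1)^{\bra \alpha,\beta\ket} x_{\alpha+\beta}$ repeatedly, the monomial $\prod_{i=1}^n x_{\gamma_i}^{k_i} \prod_{j=1}^n x_{-\gamma_j}^{k_{j+n}}$ collapses to $(-1)^{\bf k} x_{[{\bf k}]}$, where the sign $(-1)^{\bf k}$ depends only on the exponent vector ${\bf k}$ (once the ordering of the basis $\{\gamma_i\}$ is fixed, as noted immediately before the statement of the Corollary). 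This yields the identity in $\g_{\Gamma}\pow{\bf s}$
\begin{equation*}
Y^{-1}_{\bf s}(x_{\gamma_i}) = \sum_{\alpha \in \Gamma} \Bigl(\sum_{[{\bf k}] = \alpha} (-1)^{\bf k}\,[{\bf \xi}^{\bf k}] E_i({\bf \xi})\Bigr) x_{\alpha}.
\end{equation*}

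Finally, pairing with $x_\alpha$ using the form $g(x_\alpha, x_\beta) = g_0\,\delta_{\alpha,\beta}$ from Proposition \ref{DTFrobTypeStr} (extended $\C\pow{\bf s}$-linearly) picks out precisely the coefficient of $x_\alpha$ above, giving the first claimed equality
\begin{equation*}
g(x_\alpha, Y^{-1}_{\bf s}(x_{\gamma_i})) = g_0 \sum_{[{\bf k}] = \alpha} (-1)^{\bf k}\,[{\bf \xi}^{\bf k}] E_i({\bf \xi}).
\end{equation*}
The second equality is then immediate by substituting the explicit expression \eqref{GoodEqu} for $[{\bf \xi}^{\bf k}] E_i({\bf \xi})$ provided by the Lemma. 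The only point requiring care is the sign bookkeeping in the passage $\xi^{\bf k} \mapsto (-1)^{\bf k} x_{[{\bf k}]}$, which is exactly what the notation $(-1)^{\bf k}$ was introduced to package.
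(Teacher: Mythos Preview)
Your proposal is correct and follows exactly the approach the paper intends: the Corollary is stated in the paper without proof, as it is meant to be an immediate consequence of the preceding Lemma together with the specialisation $\xi_j \to x_{\gamma_j}$, $\xi_{j+n} \to x_{\gamma_j}^{-1}$ and the definition of $(-1)^{\bf k}$ introduced just before the statement. Your write-up simply makes explicit the bookkeeping (expanding $E_i$ in monomials, collapsing $\xi^{\bf k}$ to $(-1)^{\bf k} x_{[{\bf k}]}$ via the twisted product, and extracting the $x_\alpha$-coefficient with $g$) that the paper leaves to the reader.
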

\begin{corollary} For $i = 1, \ldots, m$ we have
\begin{align}\label{explicitA}
\nonumber A_{\bf s}(z, Z, \lambda)(x_{\gamma_i}) = &\sum_{\alpha \in \Gamma} \sum_{[ {\bf k} ] = \alpha} (-1)^{\bf k} [{\bf \xi}^{\bf k}] \det(\delta_{p q} + \xi_p \del_{q} \Phi_p({\bf \xi} )) \xi_i \exp\left(- \sum_j k_j \Phi_j({\bf \xi} )\right) \\& Y(x_{\alpha}) \sum_{T} \bra \alpha, W_T(Z)\ket \del_z H_T(z, Z, \lambda) \prod_{v \in T} s^{[\alpha(v)]_+ - [\alpha(v)]_-} x_{\alpha(v)} \in \g_{\Gamma}\pow{\bf s}.
\end{align}
In particular the CV-deformation $\cQ_{\bf s}(\lambda Z)$ is the derivation of $\g_{\Gamma}\pow{\bf s}$ determined by
\begin{equation*}
\cQ_{\bf s}(\lambda Z) (x_{\gamma_i}) = \operatorname{Res}_{z = 0} A_{\bf s}(x_{\gamma_i}). 
\end{equation*}
\end{corollary}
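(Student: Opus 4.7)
The plan is to combine two ingredients already in hand: the explicit formula for $Y^{-1}_{\bf s}(x_{\gamma_i})$ given by the preceding Corollary (via Good's multivariate Lagrange inversion), and the explicit formula for $\del_z Y_{\bf s}(x_\alpha)$ obtained by differentiating \eqref{explicitY} in $z$. Since $A_{\bf s}$ is defined by $\del_z Y_{\bf s} = A_{\bf s}\,Y_{\bf s}$, we have $A_{\bf s} = (\del_z Y_{\bf s})\circ Y^{-1}_{\bf s}$ as a composition of linear maps, so evaluating on the generator $x_{\gamma_i}$ gives
\begin{equation*}
A_{\bf s}(x_{\gamma_i}) = \del_z Y_{\bf s}\bigl(Y^{-1}_{\bf s}(x_{\gamma_i})\bigr).
\end{equation*}

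First I would expand $Y^{-1}_{\bf s}(x_{\gamma_i})$ in the natural basis $\{x_\alpha\}$. The quadratic form $g(x_\alpha, x_\beta) = g_0 \delta_{\alpha\beta}$ extracts the $x_\alpha$-coefficient, and the previous Corollary identifies it as $\sum_{[{\bf k}] = \alpha} (-1)^{\bf k} [\xi^{\bf k}] E_i(\xi)$; inserting the Lagrange formula \eqref{GoodEqu} for $[\xi^{\bf k}]E_i$ yields the scalar factor appearing on the first line of \eqref{explicitA}. Next I would apply $\del_z Y_{\bf s}$ to each term $x_\alpha$ by differentiating the explicit expression \eqref{explicitY} in $z$: the term $x_\beta$ in front is independent of $z$, so $\del_z$ acts only on $H_T(z,Z,\lambda)$ inside the exponential, producing the factor $Y_{\bf s}(x_\alpha)\sum_T \bra \alpha, W_T(Z)\ket \del_z H_T \prod_v {\bf s}^{[\alpha(v)]_+ - [\alpha(v)]_-} x_{\alpha(v)}$ which is exactly the second line of \eqref{explicitA}. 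Summing these products over $\alpha$ and using $\C\pow{\bf s}$-linearity gives the formula in the Corollary.

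For the second assertion, I would simply read off the polar part of the meromorphic connection $\nabla_{\bf s}(Z,\lambda)$ from \eqref{isoConnections}: the $dz$-part is $\bigl(\tfrac{1}{z^2}Z - \tfrac{1}{z}\cQ_{\bf s}(\lambda Z) - \lambda^2 \kappa_{\bf s}(\lambda Z) Z \kappa_{\bf s}(\lambda Z)\bigr)dz$. Since $A_{\bf s}$ is defined as the opposite of this connection $1$-form, the Laurent expansion of $A_{\bf s}$ at $z = 0$ has $\cQ_{\bf s}(\lambda Z)$ as its residue, which is precisely the claimed equality $\cQ_{\bf s}(\lambda Z)(x_{\gamma_i}) = \operatorname{Res}_{z=0} A_{\bf s}(x_{\gamma_i})$.

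There is no substantive obstacle here; the statement is essentially a bookkeeping consequence of Proposition \ref{DefFlatSection}, the explicit Lagrange inversion of the previous Corollary, and the definition of $\nabla_{\bf s}(Z,\lambda)$. The only mildly delicate point is to verify that the two manipulations — extracting coefficients of $Y^{-1}_{\bf s}(x_{\gamma_i})$ via $g$, and applying $\del_z$ termwise to the infinite sum \eqref{explicitY} — are legitimate, which is ensured by the fact that all identities take place modulo $J^N$ where the sums over trees are finite.
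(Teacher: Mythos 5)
Your proof is correct and follows the route the paper implicitly takes: the Corollary is a direct bookkeeping consequence of $A_{\bf s}=(\del_z Y_{\bf s})\circ Y_{\bf s}^{-1}$, the basis expansion of $Y_{\bf s}^{-1}(x_{\gamma_i})$ via $g$ together with the Lagrange-inversion formula \eqref{GoodEqu}, the $z$-derivative of \eqref{explicitY}, and reading off the residue of the $dz$-component of \eqref{isoConnections}. The closing remark that all manipulations are finite modulo $J^N$ is exactly the needed justification.
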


\section{Estimates on graph integrals}\label{estimatesSec}

In this section we study the graph integrals $H_T(z, Z, \lambda)$. We fix a tree $T$ and $z^*\in \C^*$ which does not belong to any of the rays $\ell_{\alpha(v)}$ for $v \in T$. We will write
\begin{equation*}
H_T(Z, \lambda) = H(z^*, Z, \lambda).
\end{equation*} 
\begin{proposition}\label{mainEstimateProp}
Let $T$ be a $\Gamma$-labelled rooted tree with $n$ vertices. Then there exist universal constants $\bar{\lambda}, C_1, C_2 > 0$, depending only on the constant in the support condition \eqref{support} (in particular, independent of $n, z^*$), such that   
\begin{equation}\label{mainEstimate}
|H_T(Z, \lambda)| \leq  C^n_1  \exp( - C_2 \sum_{v \in T} |Z(\alpha(v))| \lambda)
\end{equation} 
for all $\lambda > \bar{\lambda}$.
\end{proposition}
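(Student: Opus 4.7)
I would prove this by induction on $n = |V(T)|$, the base case $H_\emptyset = 1$ being trivial. For the inductive step, in the recursive definition \eqref{HFun}, the inductive hypothesis gives a uniform-in-$w$ bound $|H_{T_j}(w)| \leq C_1^{n_j}\exp(-C_2\lambda\sum_{v\in T_j}|Z(\alpha(v))|)$, so the task reduces to bounding the kernel integral
\begin{equation*}
\int_0^\infty \left|\frac{z^*}{w(s)-z^*}\right| \exp\bigl(-|Z(\alpha_T)|/s - \lambda^2|Z(\alpha_T)|^2 s\bigr) \frac{ds}{s}
\end{equation*}
(writing $w(s) = s\, Z(\alpha_T)/|Z(\alpha_T)|$ for $\ell_{\alpha_T}$) by a constant multiple of $\exp(-C_2\lambda|Z(\alpha_T)|)$, uniformly in $z^* \in \C^*\setminus\ell_{\alpha_T}$ and for $\lambda$ larger than some threshold $\bar\lambda$. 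A naive absolute-value estimate on $|z^*/(w-z^*)|$ is insufficient, because the pointwise bound $1/|\sin\psi|$ (with $\psi$ the angle between $z^*$ and $\ell_{\alpha_T}$) and the $L^1$ integral $\int e^{-g(u)}/|u-\mu|\,du$ both develop a logarithmic blow-up as $z^*$ approaches the ray; the complex cancellation in the Cauchy-type integrand must be exploited.

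The main idea is contour deformation. Rotate $\ell_{\alpha_T}$ to $\ell' = e^{i\delta}\ell_{\alpha_T}$, choosing $\delta \in [-\pi/4, \pi/4]$ so that $z^*$ lies at angular distance at least $\pi/4$ from $\ell'$, which is always possible. On $\ell'$ the real part of $Z(\alpha_T)/w + \lambda^2\bar Z(\alpha_T) w$ is only multiplied by $\cos\delta \geq \sqrt{2}/2$, so exponential decay is preserved with saddle height $\sqrt{2}\lambda|Z(\alpha_T)|$, while the factor $|z^*/(w-z^*)| \leq \sqrt{2}$ uniformly. A standard saddle-point estimate on $\ell'$ (valid for $\lambda$ above a threshold so that Gaussian corrections at the saddle are $O(1)$) combined with the inductive bound on $H_{T_j}$ applied at points of $\ell'$ yields a contribution bounded by a constant times $C_1^{n-1}\exp(-C_2\lambda\sum_v|Z(\alpha(v))|)$ for any $C_2 < \sqrt{2}$.

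The principal obstacle is controlling the corrections introduced by the deformation. Crossing the pole at $w = z^*$ contributes a residue $e^{-Z(\alpha_T)/z^* - \lambda^2\bar Z(\alpha_T)z^*}\prod_j H_{T_j}(z^*)$; its exponential factor is bounded by $\exp(-\sqrt{2}\lambda|Z(\alpha_T)|)$ (by the same saddle computation, now in the variable $|z^*|$, with optimum at $|z^*|=1/\lambda$), and the product is controlled by the inductive hypothesis, giving a contribution of the required form. Crossings of the branch cuts $\ell_{\alpha(v)}$ of the factors $H_{T_j}(w)$ produce jump contour integrals of the same structure, handled recursively by the same estimate; there are only finitely many such crossings because $T$ is finite. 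Summing all contributions and choosing $C_1$ large enough to absorb the accumulated multiplicative constants closes the induction, with $\bar\lambda$ determined by the support constant so that the saddle-point approximation is effective on all $|Z(\alpha_T)|$ accessible under \eqref{support}.
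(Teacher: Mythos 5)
Your proposal pursues a genuinely different route from the paper's: you want to deform the contour $\ell_{\alpha_T}$ into a rotated ray $\ell'$ and run a complex-analytic saddle-point argument, whereas the paper parametrises each ray by a real variable, applies the Sokhotski--Plemelj theorem to split near-coincident rays into a principal-value part and a residue part, and then controls the principal-value parts via the $L^p$-boundedness of the Hilbert transform, a Sobolev embedding, and a dedicated lemma (Lemma~\ref{iterativeLem}) for iterated Hilbert transforms. The two methods are aiming at the same cancellation in the Cauchy kernel, but your sketch has a genuine gap in the treatment of the branch-cut (jump) contributions, which is precisely where the paper's combinatorial and functional-analytic scaffolding does the work.

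Concretely, when the contour sweeps from $\ell_{\alpha_T}$ to $\ell'$, the jump across a branch cut $\ell_{\alpha_{T_1}}$ replaces $H_{T_1}(w)$ by its Plemelj jump $\exp(-Z(\alpha_{T_1})/w - \lambda^2 \bar Z(\alpha_{T_1})w)\prod_k H_{T_{1k}}(w)$, producing an integral over $\ell_{\alpha_{T_1}}$ with a \emph{merged} exponential $\exp(-(Z(\alpha_T)+Z(\alpha_{T_1}))/w - \cdots)$ and with a product of $H$-factors that is no longer $\prod_j H_{T_j}$. This is not of the form $H_{T'}$ for any $\Gamma$-labelled rooted tree $T'$, so the inductive hypothesis as you have stated it does not apply; one would have to prove a stronger statement, by induction over a wider class of Cauchy-type iterated integrals with arbitrary effective decorations, which you do not formulate. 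Moreover, inside these jump integrals the point $z^*$ can again lie arbitrarily close to the new integration ray, so the very singularity you set out to tame recurs and another deformation is needed; the cascade of such deformations is unbounded a priori, and you do not show that the total number of terms generated stays $O(C^n)$ rather than growing like $n!$, which is essential for the stated $C_1^n$. (The paper avoids this by deciding once per edge of $T$ between the principal-value and integrate-out alternatives, giving an explicit $|I| \le 2^n$.) Finally, the residue discussion is confusing: if $\delta$ is chosen to rotate \emph{away} from $z^*$ (the natural choice to gain angular separation), the pole at $z^*$ is never crossed; if instead one rotates towards $z^*$, the claimed bound $\exp(-\sqrt{2}\lambda|Z(\alpha_T)|)$ on the residue factor silently uses that $\arg z^*$ is within $\pi/4$ of $\arg Z(\alpha_T)$ --- otherwise $\Re(\bar Z(\alpha_T)z^*)$ could be negative and the factor unbounded --- and this needs to be stated explicitly. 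The contour-deformation strategy is plausible and may well be salvageable, but as written it is missing the key lemma and the combinatorial accounting that make the argument close.
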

\noindent The crucial point is that the estimate \eqref{mainEstimate} holds up to the boundary of $\Hom^{sg}(\Gamma, \C)$ where some distinguished rays collide, and irrespective of the presence of accumulation points for the set of distinguished rays for a fixed central charge $Z$.

We now collect some necessary preliminaries to the proof of Proposition \ref{mainEstimateProp}. For nonzero $\alpha \in \Gamma$, $\lambda > 0$ we introduce a function
\begin{equation*}
u_{\alpha, \lambda}(s) = \frac{1}{s} \exp( - \lambda |Z(\alpha)|( s^{-1} +  s)) \chi_{(0, + \infty)}.
\end{equation*}
Notice that $u_{\alpha, \lambda} \in C^{\infty}(\R) \cap L^p(\R)$ for all $1 \leq p \leq \infty$. 
\begin{definition} We denote by $\mathcal{H}$ the \emph{Hilbert transform} on the real line, a bounded linear operator mapping $L^p(\R)$ to itself for $1 < p < \infty$ (by a theorem of M. Riesz, see e.g. \cite{horm} section 3.2). In particular we have by definition
\begin{equation*}
\mathcal{H}[u_{\alpha, \lambda}](s) = \pv \int^{\infty}_0 \frac{d w}{w} \frac{1}{s - w} \exp[- \lambda |Z(\alpha)|( w^{-1} +  w)]. 
\end{equation*}
\end{definition}

By the Riesz theorem $\mathcal{H}[u_{\alpha, \lambda}](s)$ lies in $L^p(\R)$ for $1 < p < \infty$. Standard regularity results imply that $\mathcal{H}[u_{\alpha, \lambda}](s)$ is in $C^1(\R_{s} \times \R_{\lambda > 0})$ and that we can differentiate under the $\mathcal{H}$ operator. One can check by explicit computation that $\mathcal{H}[u_{\alpha, \lambda}]$ as well as $\del_s \mathcal{H}[u_{\alpha, \lambda}]$ lie in $L^{\infty}(\R_s \times \R_{\lambda > 0})$.

We consider a class of functions defined iteratively by
\begin{equation}\label{iterativeFun}
\tau s^{l} u_{\alpha, \lambda}( s) \prod^{k}_{i = 1} \mathcal{H}[v_i](s) 
\end{equation}
where $\tau \in \C^*$, $l = 0, 1$ and each $v_i$ is again of the form \eqref{iterativeFun} for some $\alpha_i \in \Gamma$. Examples include $u_{\alpha_0, \lambda} \prod^k_{i = 1}\mathcal{H}[u_{\alpha_i, \lambda}]$ as well as $u_{\alpha_0, \lambda} \mathcal{H}[u_{\alpha_1, \lambda}\mathcal{H}[u_{\alpha_2, \lambda} \cdots ]]$.
\begin{lem}\label{iterativeLem} Let $u$ be a function of the form \eqref{iterativeFun}, with $m$ corresponding lattice elements $\alpha_1, \ldots, \alpha_m$ (not necessarily distinct). Then there are constants $C_1, C_2, \bar{\lambda}_1$, independent of $m$, depending only on $\tau$ and a common lower bound on $|Z(\alpha_1)|, \ldots, |Z(\alpha_m)|$, such that for all $\lambda > \bar{\lambda}_1$ we have
\begin{equation*}
|| u(s) ||_{L^1} \leq C^m_1 \prod^m_{i = 1}\exp(- C_2 |Z(\alpha_i)|\lambda).
\end{equation*}
\end{lem}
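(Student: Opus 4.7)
The plan is to prove Lemma \ref{iterativeLem} by induction on the depth of the iterative definition of $u$. Since the inductive step applies a Hilbert transform, one cannot close the induction on $||u||_{L^1}$ alone, so I would strengthen the claim to an $L^p$ bound valid for $p$ in a range bounded away from $1$: for every $p \in [2, \infty)$,
\[
||u||_{L^p} \leq C(p)^{m} \prod_{j=1}^m \exp\bigl(-C_2\,|Z(\alpha_j)|\,\lambda\bigr),
\]
with $C(p)$ depending at worst polynomially on $p$. The $L^1$ statement then follows from the $p=2$ case by a single Hölder step against $u_{\alpha_0, \lambda}^{1/2}$, or can equivalently be incorporated into the outermost step of the induction.

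For the base case $u = \tau s^l u_{\alpha,\lambda}$ (no Hilbert factors), the substitution $s = e^t$ turns $||u||_{L^p}^p$ into $|\tau|^p \int_{\R} \exp\bigl((pl - p + 1)t - 2p\lambda|Z(\alpha)|\cosh t\bigr)\,dt$. Using $\cosh t \geq 1 + \tfrac12 t^2$ and a shifted Gaussian majorant, combined with the uniform lower bound $|Z(\alpha)| \geq c$, this gives $||s^l u_{\alpha, \lambda}||_{L^p} \leq A_p \exp(-C'\lambda|Z(\alpha)|)$ for $\lambda$ large, with $C'$ arbitrarily close to $2$. In the inductive step, for $u = \tau s^l u_{\alpha_0, \lambda}\prod_{i=1}^k \mathcal{H}[v_i]$, I would apply the generalized Hölder inequality with $k+1$ factors of common exponent $p(k+1)$,
\[
||u||_{L^p} \leq |\tau|\,||s^l u_{\alpha_0, \lambda}||_{L^{p(k+1)}}\prod_{i=1}^k ||\mathcal{H}[v_i]||_{L^{p(k+1)}},
\]
invoke the M.~Riesz theorem to write $||\mathcal{H}[v_i]||_{L^{p(k+1)}} \leq R_{p(k+1)} ||v_i||_{L^{p(k+1)}}$, and close the induction on each $v_i$ at the raised exponent.

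The main obstacle is the accumulation of constants through the induction. The Riesz norm $R_p$ grows with $p$, and the exponent $p$ propagated downward grows multiplicatively in the branching factors $k+1$; a naive bookkeeping produces constants of order $\exp(O(m^2))$ rather than the target $C^m$. My strategy to defeat this growth is to exploit the exponential decay in $\lambda$. Choosing $C_2$ strictly smaller than the base decay rate $2$ leaves an exponential gap $\exp\bigl(-(2-C_2)\lambda|Z(\alpha_j)|\bigr)$ attached to every lattice label $\alpha_j$; since $|Z(\alpha_j)| \geq c$ uniformly by the support condition, for $\lambda$ larger than a threshold $\bar{\lambda}_1$ depending only on $|\tau|$ and $c$, this gap swallows the product of Riesz and base $L^p$ constants accrued at the vertex carrying label $\alpha_j$. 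Uniformity of $\bar{\lambda}_1$ in $m$ relies on the fact that $R_p$ and $A_p^{1/p}$ grow at worst polynomially in $p$ while the exponential budget is available at each of the $m$ labels independently, so the balance is local to each vertex rather than global to the whole iterative structure.
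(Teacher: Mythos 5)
Your strategy of propagating raised H\"older exponents down the tree does not close with constants of the form $C_1^m$ for a threshold $\bar\lambda_1$ independent of $m$, and the reason is precisely the non-locality you try to wave away at the end. At a vertex of depth $d$ the exponent you carry is $p\,\prod_{j\le d}(k_j+1)$, i.e.\ the product of branching factors along the path from the root. Even for a path graph with all $k_j=1$ this is $2^{d}p$, and for a tree with $m$ vertices the maximal exponent is generically exponential in $m$. The Riesz constant of the Hilbert transform on $L^q$ grows like $q$ for large $q$, so the factor accrued at a deep vertex is $R_{q_j}\sim q_j$, which is \emph{not} bounded by any quantity depending only on $|\tau|$ and the support constant $c$. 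The exponential budget you want to spend at that vertex, $\exp\bigl((2-C_2)\lambda\,|Z(\alpha_j)|\bigr)$, is a \emph{fixed} number once $\lambda$ (hence $\bar\lambda_1$) is fixed, because $|Z(\alpha_j)|$ is only bounded \emph{below} by $c$, and the lemma gives you no upper bound. So for any admissible $\bar\lambda_1$ there exist trees (e.g.\ long enough paths) whose deep vertices carry Riesz constants exceeding the per-vertex budget, and the claimed bound fails. The balance is not local to each vertex: the exponent at a vertex remembers the whole ancestor chain.

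The paper's proof sidesteps this entirely by never raising the exponent. It bounds $\|u\|_{L^1}\le\prod_i\|\mathcal{H}[v_i]\|_\infty\cdot\|\tau s^l u_{\alpha,\lambda}\|_{L^1}$, controls each $\|\mathcal{H}[v_i]\|_\infty$ via the one-dimensional Sobolev embedding $W^{1,2}\hookrightarrow L^\infty$, and then uses $L^2$-boundedness of $\mathcal H$ (a single fixed Riesz constant) together with the fact that $\mathcal H$ commutes with $\partial_s$ to reduce to $\|v_i\|_{W^{1,2}}$. The price is the mixed $L^2$/$L^\infty$ bookkeeping for $v_i$ and $\partial_s v_i$, but every operator norm that appears is at a \emph{fixed} exponent, so the constant accrued per vertex is genuinely uniform. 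If you want to salvage your approach you would need either to cap the exponent (e.g.\ by inserting an $L^\infty$ bound as in the paper whenever the exponent would otherwise grow), or to prove a Riesz-type bound whose constant does not grow with $p$, which is false. As it stands, the gap is real and the key idea of the paper --- trading exponent growth for one derivative via Sobolev embedding --- is missing.
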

\begin{proof} We will argue by induction on $m$. Using the specific form \eqref{iterativeFun} of $u$ we find
\begin{align*}
|| u(s) ||_{L^1} \leq \prod^{k}_{i = 1} || \mathcal{H}[v_i]( s) ||_{\infty} || \tau s^l u_{\alpha, \lambda}( s)||_{L^1} 
\end{align*}
provided all the $\mathcal{H}[v_i]$ are bounded. By explicit computation (for example using the Laplace approximation for exponential integrals) the factor $||\tau s^l u_{\alpha, \lambda}(\tau s)||_{L^1}$   
has the required uniform exponential decay dominated by $C_1 \exp( - C_2 |Z(\alpha)|\lambda)$ for some fixed uniform $C_2$ and all sufficiently large $C_1$. So we focus on $|| \mathcal{H}[v_i](s)||_{\infty}$. By an  elementary  Sobolev embedding we have 
\begin{equation*}
|| \mathcal{H}[v_i](s)||_{\infty} \leq c_1 || \mathcal{H}[v_i] ||_{W^{1, 2}}
\end{equation*} 
so we start by controlling the $L^2$ norms $|| \mathcal{H}[v_i] ||_{L^2}$, $||\del_s \mathcal{H}[v_i] ||_{L^2}$. By $L^2$ boundedness of $\mathcal{H}$ and the fact that it commutes with $\del_s$ we find
\begin{equation*}
|| \mathcal{H}[v_i] ||_{L^2} \leq c_2 || v_i ||_{L^2}, \quad || \del_s \mathcal{H}[v_i] ||_{L^2} \leq c_2 || \del_s v_i ||_{L^2},
\end{equation*}
that is
\begin{equation*}
|| \mathcal{H}[v_i] ||_{\infty} \leq c_1 c_2 || v_i ||_{W^{1,2}}.
\end{equation*}
We have reduced the problem to finding exponential bounds on $|| v_i ||_{L^2}$ and $|| \del_s v_i ||_{L^2}$. Writing 
\begin{equation*} 
v_i = \tau_i s^{l_i} u_{\beta, \lambda}(s) \prod^{k_i}_{j = 1} \mathcal{H}[w_j]( s) 
\end{equation*}
we get 
\begin{align*}
|| v_i ||_{L^2} & \leq   \prod^{k_i}_{j = 1} || \mathcal{H}[w_j]( s) ||_{\infty} || \tau_i s^{l_i} u_{\beta, \lambda}( s) ||_{L^2}, \\
|| \del_s v_i ||_{L^2} & \leq  \sum^{k_i}_{r = 1} || \mathcal{H}[\del_s w_r]( s) ||_{L^2} \prod_{j \neq r} || \mathcal{H}[w_j](  s) ||_{\infty} || \tau_i s^{l_i} u_{\beta, \lambda}(s) ||_{\infty}\\
& + \prod^{k_i}_{j = 1} || \mathcal{H}[w_j]( s) ||_{\infty} || \del_s ( \tau_i s^{l_i} u_{\beta, \lambda}(s)) ||_{L^2}\\
& \leq c_3 \left(\sum^{k_i}_{r = 1} || \del_s w_r (s) ||_{L^2} \prod_{j \neq r} || \mathcal{H}[w_j]( s) ||_{\infty} || \tau_i s^{l_i} u_{\beta, \lambda}(s) ||_{\infty} \right. \\
&\left. + \prod^{k_i}_{j = 1} || \mathcal{H}[w_j]( s) ||_{\infty} || \del_s (\tau_i s^{l_i} u_{\beta, \lambda}(s)) ||_{L^2}\right).
\end{align*}
Notice that we chose the $L^2$ norm for the factor $\mathcal{H}[\del_s w_r](s)$ rather than the supremum norm so that no further derivatives are required to control this. By explicit computation (e.g. Laplace approximation) the factors $ || \tau_i s^{l_i} u_{\beta, \lambda}(s) ||_{L^2}$, $|| \tau_i s^{l_i} u_{\beta, \lambda}(s) ||_{\infty}$ and $|| \del_s ( \tau_i s^{l_i} u_{\beta, \lambda}(s)) ||_{L^2}$ are all dominated by $C_1 \exp(- C_2|Z(\beta)| \lambda)$ for some fixed uniform $C_2$ and all large $C_1$. Assuming inductively that we have the required exponential bounds on the norms $|| w_j ||_{L^2}$, $|| \del_s w_j||_{L^2}$ for all $j = 1, \ldots, k_i$ the inequalities above imply a bound (denoting by $m_i$ the number of lattice elements $\alpha^i_j$ attached to $v_i$, counted with their multiplicities)
\begin{equation*}
|| v_i ||_{W^{1,2}} \leq c^{m_i}_4 \prod^{m_i}_{j = 1} \exp(- C_2 |Z(\alpha^i_j)|\lambda). 
\end{equation*}
Taking the product over $i = 1, \ldots, m$ yields the result, with $C_1 = c_4$.
\end{proof}
\begin{proof}[Proof of Proposition \ref{mainEstimateProp}] In the course of the proof we use the notation $s_v$ for $v \in T$ to denote positive real integration variables. Hopefully these will not be confused with the parameters ${\bf s}$ of our formal families; the latter never appear in the present section. Parametrising the ray $\ell_{\alpha(v)}$ for $v \in T$ by  
\begin{equation*} 
\lambda^{-1} (|Z(\alpha(v))|)^{-1} Z(\alpha(v)) s_v, \, s_v \in \R_{> 0}
\end{equation*} 
for each $v \in T$ turns $H_T(Z, \lambda)$ into an iterated integral along the positive real line $(0, +\infty)$. Pick a vertex $w \in T$ with unique incoming vertex $v$ distinct from the root. There is a corresponding factor in $H_T(Z, \lambda)$ given by 
\begin{equation*}
(2\pi i)^{-1} \int^{\infty}_0  ds_w  \frac{\tau_w s_v}{\tau_w s_v - s_w} u_{\alpha(w), \lambda}(s_w),  
\end{equation*}
with
\begin{equation*}
\tau_w = \frac{|Z(\alpha(w))|}{Z(\alpha(w))} \frac{Z(\alpha(v))}{|Z(\alpha(v))|}. 
\end{equation*}
Let $c_1, \delta > 0$ denote positive constants to be determined independently of $T$ (in particular, independently of $n$). Suppose that there is an edge $\{v \to w\} \subset T$ such that $|\Im(\tau_w)| < \delta$. Choose the edge for which $\Im(\tau_w)$ is the smallest possible in $T$ (that is, such that the sine of the convex positive angle between the corresponding rays $\ell_{\alpha(v)}$, $\ell_{\alpha(w)}$ is less than $\delta$, and the smallest among edges in $T$). Notice that by our minimal choice of $v \to w$ there are no further rays $\ell_{\alpha(w')}$ with $w \to w'$ between $\ell_{\alpha(v)}$ and $\ell_{\alpha(w)}$. We claim that for sufficiently small $\delta$ there is a uniform $c_1$ such that
\begin{equation*}
|H_T(Z, \lambda)| \leq c_1( |H_{T, 1}(Z, \lambda)| + |H_{T, 2}(Z, \lambda)|),
\end{equation*}
where the iterated integrals $H_{T, 1}(Z, \lambda)$ and $H_{T, 2}(Z, \lambda)$ are obtained by replacing the factor
\begin{align}\label{PlemeljInt}
(2\pi i)^{-2} \int^{\infty}_0 ds_v \frac{\tau_v s_o}{\tau_v s_o - s_v} u_{\alpha(v), \lambda}(s_v) \int^{\infty}_0  ds_w \frac{\tau_w s_v}{\tau_w s_v - s_w} u_{\alpha(w), \lambda}(s_w) 
\end{align}
attached to the subgraph $\{o \to  v \to w\} \subset T$ (denoting by $o$ the unique vertex mapping to $v$) by the Hilbert transform 
\begin{equation}\label{pv}
(2\pi i)^{-2}  \int^{\infty}_0  ds_v  \frac{\tau_v s_o}{\tau_v s_o - s_v} u_{\alpha(v), \lambda}(s_v)  s_v \mathcal{H}[u_{\alpha(w), \lambda}](  s_v)
\end{equation}
in the case of $H_{T, 1}(Z, \lambda)$, respectively by 
\begin{equation}\label{residue}
(2\pi i)^{-1} \int^{\infty}_0  ds_v  \frac{\tau_v s_o}{\tau_v s_o - s_v} u_{\alpha(v), \lambda}(s_v) u_{\alpha(w), \lambda}(s_v)
\end{equation}
in the case of $H_{T, 2}(Z, \lambda)$. This holds because by the classical Sokhotski-Plemelj theorem in complex analysis (see e.g. \cite{horm} section 3.2) the limit of the factor \eqref{PlemeljInt} as $\tau_w \to 1$ is given by the sum of the principal value part \eqref{pv}, and the residue part \eqref{residue}, with suitable signs (determined by whether $\Im(\tau_w) \to 0$ from below or above). The $\tau_w \to 1$ limit holds uniformly for all $\alpha(v)$, $\alpha(w)$, so the claim follows. 

Notice that we can estimate the residue part \eqref{residue} by
\begin{equation*}
||u_{\alpha(w), \lambda}||_{\infty}\left| (2\pi i)^{-1} \int^{\infty}_0  ds_v  \frac{\tau_v s_o}{\tau_v s_o - s_v} u_{\alpha(v), \lambda}(s_v) \right|.
\end{equation*} 
Let $T_2$ be the rooted, $\Gamma$-labelled tree obtained from $T$ by contracting the edge $\{v \to w\}\subset T$ to a single vertex decorated by $\alpha(v)$. By the estimate above we have 
\begin{equation*}
|H_{T, 2}(Z, \lambda)| \leq ||u_{\alpha(w), \lambda}||_{\infty} |H_{T_2}(Z, \lambda)|), 
\end{equation*}
so 
\begin{equation}\label{residueEstimate}
|H_T(Z, \lambda)| \leq c_2( |H_{T, 1}(Z, \lambda)| + ||u_{\alpha(w), \lambda}||_{\infty} |H_{T, 2}(Z, \lambda)|).
\end{equation}

On the other hand edges $\{v \to w\}\subset T$ for which we have a fixed lower bound $|\Im(\tau_w)| \geq \delta > 0$ can be ``integrated out": let $T_3 \subset T$ be the (rooted, $\Gamma$-labelled) subtree obtained by chopping out the (rooted, $\Gamma$-labelled) subtree $T_4 \subset T$ with root $w$. Then there is a constant $c_3$, depending only on $\delta$, such that
\begin{equation*}
|H_T(Z, \lambda)| \leq c_3  |H_{T_3}(Z, \lambda)|  |H_{T, 4}(Z, \lambda)| ,
\end{equation*}
wherer $H_{T, 4}(Z, \lambda) $ equals essentially $H_{T_4}(Z, \lambda) $, but with root factor in the integral replaced with  
\begin{equation*}
\int^{\infty}_0  d s_w u_{\alpha(w), \lambda}(s_w). 
\end{equation*}

We can now proceed inductively applying the two steps described above, decreasing the number of vertices of $T$ or increasing the number of $\mathcal{H}$ operators inserted. The process stops in a finite number of steps, yielding residual functions $H_i(Z, \lambda)$ for a finite set of indices $i \in I$, with cardinality $| I | \leq 2^n$,  such that
\begin{equation*}
|H_T(Z, \lambda)| \leq c^n_4 \left( \sum_{i \in I} |H_i(Z, \lambda)|\right)
\end{equation*}
where $c_4 > 0$ does not depend on $T$. By construction each $| H_i(Z, \lambda)| $ is bounded by a finite product of factors of the form $|| u_{\alpha(w), \lambda}||_{\infty}$ or $|| u(s) ||_{L^1}$, where $u$ belongs to the class of functions \eqref{iterativeFun}. So by Lemma \ref{iterativeLem} and repeated application of \eqref{residueEstimate}  each $|H_i(Z, \lambda)|$ is bounded by $C^n_1 \exp( - C_2 \sum_{v \in T} |Z(\alpha(v))| \lambda)$ for absolute constants $C_1, C_2$ and all $\lambda > \bar{\lambda}$ (independently of $T$).
The bound \eqref{mainEstimate} now follows with that same $C_2, \bar{\lambda}$ and taking the constant $C_1$ in the statement to be $2  C_1 c_4 $ in our present notation. 
\end{proof}
\section{Functional equation and convergence}\label{FunSec}

In this section we complete the proof of our main result Theorem \ref{mainThm}. We fix a continuous symmetric spectrum $\Omega$ parametrised by $\Hom^+(\Gamma, \C)$ which is the double of a positive spectrum. 
\begin{definition} Fix constants $c_1, c_2, \lambda > 0$ and a collection of formal power series $S_{\alpha}({\bf s}) \in \C\pow{\bf s}$ for $\alpha \in \Gamma$. Define a new collection $\mathcal{F}[S]_{\beta}(\bf s) \in \C\pow{\bf s}$ for $\beta \in \Gamma$ by
\begin{equation*}
\mathcal{F}[S]_{\beta}({\bf s}) = \prod_{\alpha \in \Gamma} (1 - c_1 \exp( - c_2 |Z(\alpha)|\lambda) {\bf s}^{[\alpha]_+ - [\alpha]_-}  S_{\alpha}({\bf s}))^{|\bra \beta, \alpha \ket| |\Omega(\alpha, Z)|}. 
\end{equation*} 
\end{definition}
Let us write $S^{(0)}$ for the family of constant formal power series 
\begin{equation*}
S^0_{\beta}({\bf s}) = 1 \in \C\pow{\bf s}. 
\end{equation*}
for all $\beta \in \Gamma$. We define inductively for $i \geq 0$
\begin{equation*}
S^{(i + 1)}_{\beta}({\bf s}) = \mathcal{F}[S^{(i)}]_{\beta}({\bf s}).
\end{equation*} 
\begin{lem} Fix $\bar{\rho} > 0$. There exists $\bar{\lambda} > 0$, depending only on $\bar{\rho}$ and the constants in the support and exponential growth conditions \eqref{support}, \eqref{expBound}, such that for $\lambda \geq \bar{\lambda}$ all the formal power series $ S^{(i)}_{\beta}({\bf s})$ converge for $|| {\bf s} || < \bar{\rho}$, uniformly for $i \geq 0$.  
\end{lem}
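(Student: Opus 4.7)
The plan is to argue by induction on $i$, establishing the stronger uniform estimate
\begin{equation*}
\sup_{\|\mathbf{s}\|\leq\bar\rho'} |S^{(i)}_\beta(\mathbf{s})| \leq e^{K\|\beta\|}
\end{equation*}
for all $\beta \in \Gamma$, all $i \geq 0$, and every $\bar\rho' < \bar\rho$, where $K > 0$ and $\bar\lambda$ depend only on $\bar\rho$ and the constants in \eqref{support} and \eqref{expBound}. The base case $i=0$ is immediate since $S^{(0)}_\beta \equiv 1$.

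For the inductive step, I would first apply the elementary complex-analytic bound $|(1-w)^p| = |1-w|^p \leq (1+|w|)^p \leq e^{p|w|}$, valid with the principal branch for real $p \geq 0$ and $|w|<1$, to each factor of the infinite product defining $\mathcal{F}[S^{(i)}]_\beta(\mathbf{s})$. Since every exponent $|\langle\beta,\alpha\rangle|\,|\Omega(\alpha,Z)|$ is nonnegative and real, this yields the pointwise inequality
\begin{equation*}
|S^{(i+1)}_\beta(\mathbf{s})| \leq \exp\!\left(\sum_{\alpha \in \Gamma}|\langle\beta,\alpha\rangle|\,|\Omega(\alpha,Z)|\,c_1 e^{-c_2|Z(\alpha)|\lambda}\,|\mathbf{s}^{[\alpha]_+-[\alpha]_-}|\,|S^{(i)}_\alpha(\mathbf{s})|\right),
\end{equation*}
where for $\lambda$ large the condition $|w_\alpha|<1$ at each factor is ensured a posteriori by the inductive hypothesis and the support condition.

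I would then substitute $|S^{(i)}_\alpha(\mathbf{s})|\leq e^{K\|\alpha\|}$ together with the elementary bilinear bound $|\langle\beta,\alpha\rangle|\leq C_0\|\beta\|\|\alpha\|$, the polydisc estimate $|\mathbf{s}^{[\alpha]_+-[\alpha]_-}|\leq C_1(\bar\rho)^{\|\alpha\|}$, and the support condition $\|\alpha\|\leq c^{-1}|Z(\alpha)|$ on the support of $\Omega$. The exponent reduces to a constant multiple of
\begin{equation*}
\|\beta\|\sum_{\alpha \in \Gamma}\|\alpha\|\,|\Omega(\alpha,Z)|\,\exp\!\left(-|Z(\alpha)|\bigl(c_2\lambda - c^{-1}(K + \log C_1(\bar\rho))\bigr)\right),
\end{equation*}
which by \eqref{expBound} is finite for $\lambda$ large (after absorbing the polynomial factor $\|\alpha\|$ into an arbitrarily small fraction of the exponential via $\|\alpha\|e^{-\delta|Z(\alpha)|}\leq$ const) and tends to $0$ as $\lambda\to\infty$. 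Fixing $K=1$, for every $\lambda\geq\bar\lambda$ large enough this sum is at most $K/C_0$, so the RHS is at most $e^{K\|\beta\|}$, closing the induction.

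The main obstacle is the solvability of the fixed-point inequality relating $K$ to the sum above for $\lambda$ large. Three ingredients must conspire: the weight $e^{-c_2|Z(\alpha)|\lambda}$ is tunable by $\lambda$; the support condition converts both the inductive factor $e^{K\|\alpha\|}$ and the polydisc factor $C_1(\bar\rho)^{\|\alpha\|}$ into exponentials in $|Z(\alpha)|$ which are absorbed into the $\lambda$-weight; and \eqref{expBound} finally dominates the residual sum. Since $K$ is independent of $i$, the estimate is uniform in $i$ and yields the claimed uniform convergence on every compact subset of $\|\mathbf{s}\|<\bar\rho$.
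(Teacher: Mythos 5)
Your proposal is correct and follows essentially the same route as the paper's proof: induction on $i$, with the inductive bound $|S^{(i)}_\alpha(\mathbf{s})|\leq c\,e^{\|\alpha\|}$ closed by converting the infinite product to a sum via $\log$ (equivalently your $|(1-w)^p|\leq e^{p|w|}$), applying the support condition to trade $\|\alpha\|$ for $|Z(\alpha)|$, and invoking the exponential growth condition to make the residual sum small for $\lambda$ large. The only cosmetic difference is that the paper carries an explicit multiplicative constant $c_3$ in the inductive hypothesis and arranges the sum to be $\leq\|\beta\|\log c_3$, whereas you absorb it into the exponent $K$; both close the induction with the same three ingredients and the same dependence of $\bar\lambda$ on $\bar\rho$, \eqref{support}, and \eqref{expBound}.
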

\begin{proof} We argue by induction on $i$. For $r > 0$ we write $B_{r} = \{ {\bf s} \in \C^n : || {\bf s} || < r \}$ for the open ball.  Pick a norm $|| - ||$ on $\Gamma \otimes \C$. Suppose that $\bar{\rho} > 0$, $\bar{\lambda} > 0$ and $c_3 > 0$ are constants such that $ S^{(i)}_{\alpha}({\bf s})$ converges absolutely and uniformly in compact subsets of $B_{\bar{\rho}}$ and moreover we have 
\begin{equation}\label{SiBound} 
|S^{(i)}_{\alpha}({\bf s})| < c_3 e^{|| \alpha ||}.
\end{equation} 
for all ${\bf s} \in B_{\bar{\rho}}$, $\lambda > \bar{\lambda}$, $\alpha \in \Gamma$. In the case of $S_0$ we can choose the constants $\bar{\rho}, \lambda > 0$ arbitrarily, while $c_3$ is a positive constant that only depends on the choice of norm $|| - ||$.  

The infinite product 
\begin{equation*}
\prod_{\alpha \in \Gamma} (1 - c_1 \exp( - c_2 |Z(\alpha)|\lambda) {\bf s}^{[\alpha]_+ - [\alpha]_-} S^{({i})}_{\alpha}({\bf s}))^{|\bra \beta, \alpha \ket| |\Omega(\alpha, Z)|}
\end{equation*}
converges absolutely and uniformly in compact subsets of $B_{\bar{\rho}}$ if and only if this happens for the series 
\begin{equation}\label{logSeries}
\sum_{\alpha \in \Gamma} |\bra \beta, \alpha \ket| |\Omega(\alpha, Z)| \log(1 - c_1 \exp( - c_2 |Z(\alpha)|\lambda) {\bf s}^{[\alpha]_+ - [\alpha]_-} S^{(i)}_{\alpha}({\bf s})).
\end{equation} 
There is a uniform constant $c_4 > 0$ such that for all sufficiently large $\lambda$, depending only on the constant in the support condition \eqref{support} and the inductive bound \eqref{SiBound}, the series \eqref{logSeries} is bounded by 
\begin{equation}\label{logSeries2}
c_4 ||\beta||\sum_{\alpha \in \Gamma} ||\alpha|| |\Omega(\alpha, Z)| c_1 \exp( - c_2 |Z(\alpha)|\lambda) c_3 \bar{\rho}^{[\alpha]_+ - [\alpha]_-} e^{||\alpha||}.  
\end{equation}
This bound is independent of $i$. If the spectrum $\Omega(\alpha, Z)$ has at most exponential growth then the series \eqref{logSeries2} converges for all sufficiently large $\lambda$, depending only on $\bar{\rho}$, the support condition \eqref{support} and the exponential growth condition \eqref{expBound}. Moreover for all sufficiently large $\lambda$, depending only on \eqref{support}, \eqref{expBound}, the sum of the series  is bounded by $|| \beta||\log c_3$, from which we get
\begin{equation*} 
|S^{(i+1)}_{\beta}({\bf s})| < c_3 e^{|| \beta ||}
\end{equation*} 
in $B_{\bar{\rho}}$. So if we choose our initial $\bar{\lambda}$ sufficiently large, depending only on $\bar{\rho}$ and the conditions \eqref{support}, \eqref{expBound}, the induction goes through.
\end{proof}
Let $T$ denote a $\Gamma$-labelled rooted tree as usual. We write $\operatorname{depth}(T)$ for the length of the longest oriented path in $T$. Let us denote by $\mu|\Omega|(\alpha, Z)$ the M\"obius transform of the function $|\Omega(\alpha, Z)|$, 
\begin{equation*}
\mu|\Omega|(\alpha, Z) = \sum_{k > 0,\, k | \alpha} \frac{1}{k^2}|\Omega(k^{-1}\alpha, Z)|.
\end{equation*} 
Note that in general $\mu|\Omega|(\alpha, Z) x_{\alpha}$ is not a continuous family of stability data in $\g_{\Gamma}$, and $|\Omega(\alpha, Z)|$ is not a continuous spectrum. This is completely irrelevant for our purposes, since we will only use the obvious bound
\begin{equation*}
|\dt(\alpha, Z)| \leq \mu|\Omega|(\alpha, Z). 
\end{equation*}
Let us introduce weights $\widetilde{W}_T(Z) \in \Gamma\otimes \Q$ by
\begin{equation*} 
\widetilde{W}_T(Z) = \frac{1}{|\Aut(T)|} \mu|\Omega|(\alpha_T, Z) \alpha_T \prod_{\{v \to w\} \subset T} \bra \alpha(v),  \alpha(w)\ket \mu|\Omega|(\alpha(w), Z).   
\end{equation*}
\begin{lem} We have 
\begin{equation*} 
S^{(i)}_{\beta}({\bf s}) = \sum_{\operatorname{disconnected} T,\, \operatorname{depth}(T) \leq i} c^{|T|}_1 |\bra \beta , \widetilde{W}_T(Z) \ket| \exp(- c_2 \sum_{v \in T} |Z(\alpha(v))| \lambda)\prod_{v \in T} {\bf s}^{[\alpha(v)]_+ - [\alpha(v)]_-}.  
\end{equation*}
\end{lem}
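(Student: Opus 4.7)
The plan is to argue by induction on $i$. The base case $i=0$ is immediate, since the only disconnected $\Gamma$-labelled rooted tree of depth at most zero is the empty tree, whose empty product $1$ matches $S^{(0)}_\beta({\bf s})$.

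For the inductive step, I would start from the recursion $S^{(i+1)}_\beta = \mathcal{F}[S^{(i)}]_\beta$ and pass to the formal logarithm. Setting $r_\alpha({\bf s}) = c_1 \exp(-c_2|Z(\alpha)|\lambda)\, {\bf s}^{[\alpha]_+ - [\alpha]_-}$, a formal expansion of $\log(1 - r_\alpha S^{(i)}_\alpha)$ produces a geometric-type series in the variables $r_\alpha S^{(i)}_\alpha$. Reindexing via $\alpha' = k\alpha$ and using $|\bra\beta, k\alpha\ket| = k\,|\bra\beta,\alpha\ket|$, the resulting double sum regroups into a single sum over $\alpha' \in \Gamma$ whose coefficient contains the M\"obius-type expression
\begin{equation*}
\sum_{k\mid\alpha'}\frac{|\Omega(\alpha'/k,Z)|}{k^2} = \mu|\Omega|(\alpha',Z),
\end{equation*}
which is precisely the weight attached to each vertex in $\widetilde{W}_T(Z)$.

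The next step is to match the resulting expression with the tree sum in the statement. Substituting the inductive hypothesis for each $S^{(i)}_\alpha$ and applying the classical species exponential formula (the exponential of a sum over connected objects is the generating sum over multisets of them), I would identify each term with a disconnected rooted tree of depth $\leq i+1$: the outer exponential forms the multiset of connected components, each logarithmic factor attaches a new root vertex decorated by some $\alpha$ to a collection of child subtrees forming a disconnected depth-$\leq i$ forest below it, and the multiplicativity $\bra\beta, \widetilde{W}_T\ket = \prod_j \bra\beta, \widetilde{W}_{T_j}\ket$ over connected components matches the multiplicativity of $\exp$.

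The main obstacle I expect is the bookkeeping of the automorphism and sign factors. The factor $1/|\Aut(T)|$ in $\widetilde{W}_T$ for each connected component must be reconstructed from three sources: the $1/k$ arising at each vertex from the logarithmic expansion, the multinomial factor $1/m!$ from the exponential formula for multisets of isomorphic components, and the multiplicative extension of $\widetilde{W}_T$ to disconnected trees. Checking that these combine correctly together with the absolute values on both sides, and verifying that one application of $\mathcal{F}$ adds precisely one new level to the tree (so that the depth bound $\leq i+1$ is exactly what the expansion produces), constitutes the bulk of the combinatorial argument.
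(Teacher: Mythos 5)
Your proposal follows the same approach as the paper's own proof: both pass to the logarithm of $S^{(i+1)}_\beta = \mathcal{F}[S^{(i)}]_\beta$, expand $\log(1-\cdot)$ as a formal power series, and argue by induction on $i$, matching terms with the tree sum via the exponential formula and a M\"obius reindexing at each vertex. The paper's proof is essentially a one-liner deferring all the combinatorial bookkeeping to \cite{fgs} section 3.6, which you flesh out in more detail; your reindexing step is phrased slightly too strongly (the $k$-th power also enters $r_\alpha^k(S^{(i)}_\alpha)^k$, so the double sum does not literally collapse to a single sum over $\alpha'$ before the inductive substitution is made), but you correctly identify that bookkeeping as the real content of the argument.
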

\begin{proof} We write
\begin{equation*}
S^{(i + 1)}_{\beta} = \exp \sum_{\alpha \in \Gamma} |\bra \beta, \alpha \ket| |\Omega(\alpha, Z)| \log(1 - c_1 \exp( - c_2 |Z(\alpha)|\lambda) {\bf s}^{[\alpha]_+ - [\alpha]_-}  S^{(i)}_{\alpha}({\bf s})).
\end{equation*}
The result follows from expanding $\log(1 - c_1 \exp( - c_2 |Z(\alpha)|\lambda) {\bf s}^{[\alpha]_+ - [\alpha]_-}  S^{(i)}_{\alpha}({\bf s}))$ as a formal power series and arguing by induction, starting from $S^{(0)}_{\alpha} = 1$ for all $\alpha$, precisely as in \cite{fgs} section 3.6.
\end{proof}
\begin{corollary}\label{comparison} Fix $c_1, c_2, \bar{\rho} > 0$. There exists $\bar{\lambda} > 0$, depending only on $\bar{\rho}$ and the constants in the support and exponential growth conditions \eqref{support}, \eqref{expBound}, such that for all $\lambda \geq \bar{\lambda}$ the formal power series 
\begin{align*} 
\sum_{\operatorname{disconnected} T} c^{|T|}_1 |\bra \beta , \widetilde{W}_T(Z) \ket| \exp(- c_2 \sum_{v \in T} |Z(\alpha(v))| \lambda) \prod_{v \in T} {\bf s}^{[\alpha(v)]_+ - [\alpha(v)]_-} 
\end{align*}
converges for $|| {\bf s} || < \bar{\rho}$. 
\end{corollary}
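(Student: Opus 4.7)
The plan is to identify the formal power series in the statement as the pointwise limit, as $i \to \infty$, of the partial sums $S^{(i)}_\beta(\mathbf{s})$ described in the preceding Lemma, and then invoke the uniform bound established in the first Lemma of the section.

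First I would observe that, term by term, every summand in the series is a non-negative real number multiplied by a monomial $\prod_{v \in T} \mathbf{s}^{[\alpha(v)]_+ - [\alpha(v)]_-}$ in $\mathbf{s}$ with non-negative integer exponents (because $[\alpha]_+ - [\alpha]_-$ has non-negative entries by construction). Consequently, if we evaluate the series at the real point $|\mathbf{s}| \defeq (|s_1|, \ldots, |s_n|)$, the partial sums indexed by ``disconnected $T$ with $\operatorname{depth}(T) \leq i$'' are precisely $S^{(i)}_\beta(|\mathbf{s}|)$ by the preceding Lemma, and they form an increasing sequence in $i$.

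Next I would apply the first Lemma of the section: for $\bar{\rho}$ fixed and $\bar{\lambda}$ sufficiently large (depending only on $\bar{\rho}$ and the constants in the support and exponential growth conditions \eqref{support}, \eqref{expBound}), we have the uniform bound $|S^{(i)}_\beta(\mathbf{t})| < c_3 e^{\|\beta\|}$ for all $i \geq 0$ and all $\mathbf{t} \in B_{\bar{\rho}}$. Since $\|\mathbf{s}\| < \bar{\rho}$ implies $\||\mathbf{s}|\| < \bar{\rho}$, we conclude that the monotone sequence $S^{(i)}_\beta(|\mathbf{s}|)$ is bounded above by $c_3 e^{\|\beta\|}$ and hence converges. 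Because $|\prod_v \mathbf{s}^{[\alpha(v)]_+ - [\alpha(v)]_-}| \leq \prod_v |\mathbf{s}|^{[\alpha(v)]_+ - [\alpha(v)]_-}$, this gives absolute convergence of the original series in the Corollary at every $\mathbf{s}$ with $\|\mathbf{s}\| < \bar{\rho}$, proving the claim.

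The main obstacle has already been handled in the first Lemma of the section (showing that a suitably large $\bar{\lambda}$ controls all iterates $S^{(i)}_\beta$ simultaneously via the exponential growth hypothesis on $\Omega$); here the remaining work is bookkeeping, i.e.\ checking that term-by-term positivity legitimately upgrades the uniform iterate bound into absolute convergence of the untruncated tree sum.
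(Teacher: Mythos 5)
Your proof is correct and takes essentially the same approach the paper implicitly relies on (the paper states the Corollary without a written-out proof, treating it as an immediate consequence of the two preceding Lemmas). You correctly identify the series as $\lim_{i \to \infty} S^{(i)}_\beta$, use the non-negativity of the tree-sum coefficients (and of the entries of $[\alpha]_+ - [\alpha]_-$) to pass from the uniform bound $|S^{(i)}_\beta| < c_3 e^{\|\beta\|}$ to monotone convergence of the truncated sums at $|\mathbf{s}|$, and thereby obtain absolute convergence of the untruncated series on $\|\mathbf{s}\| < \bar\rho$. No gaps.
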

\begin{proof}[Proof of Theorem \ref{mainThm}] We show first that, under the assumptions of the Theorem, for all sufficiently large $\lambda$, depending only on $\bar{\rho}$ and the constants in the support condition \eqref{support} and the exponential bound \eqref{expBound} all the formal power series $g(x_{\alpha}, Y(z, Z, \lambda)(x_{\beta}))$ converge absolutely and uniformly for $|| {\bf s} || < \bar{\rho}$.

By our explicit formula \eqref{explicitY} for the action of $Y(z, Z, \lambda)(x_{\beta})$ it remains to prove that there exists $\bar{\lambda} > 0$ as above such that for all $\lambda > \bar{\lambda}$ and $\beta \in \Gamma$ the complex-valued formal power series
\begin{equation}\label{alphaSeries} 
\nonumber\sum_{\operatorname{disconnected} T\,:\, \sum_{v\in T} \alpha(v) = \alpha}  \bra \beta, W_T(Z) \ket H_T(z, Z, \lambda) \prod_{v \in T} {\bf s}^{[\alpha(v)]_+ - [\alpha(v)]_-}
\end{equation}
converges for $|| {\bf s} || < \bar{\rho}$.  

We will in fact prove a statement which is independent of $\alpha$: we claim that there exists $\bar{\lambda} > 0$ as above such that for all $\lambda > \bar{\lambda}$ and $\beta \in \Gamma$ the complex-valued formal power series
\begin{equation*} 
\sum_{\operatorname{disconnected} T }  \bra \beta, W_T(Z) \ket H_T(z, Z, \lambda) \prod_{v \in T} {\bf s}^{[\alpha(v)]_+ - [\alpha(v)]_-}
\end{equation*}
(summing over all decorated trees, without the constraint that $\sum_{v\in T} \alpha(v)$ is fixed) converges for $|| {\bf s} || < \bar{\rho}$. By Proposition \ref{mainEstimateProp} and the comparison principle it is enough to prove the claim for the formal power series 
\begin{equation}\label{comparisonSeries} 
\sum_{\operatorname{disconnected} T } C_1^{|T|} |\bra \beta, \widetilde{W}_T(Z)\ket| \exp( - C_2 \sum_{v \in T} |Z(\alpha(v))| \lambda) \prod_{v \in T} {\bf s}^{[\alpha(v)]_+ - [\alpha(v)]_-}
\end{equation}
for all $\beta$, where $C_1, C_2$ are the constants in \eqref{mainEstimate}. By Corollary \ref{comparison} we can ensure that this converges for $|| {\bf s} || < \bar{\rho}$ by choosing $\bar{\lambda}$ large enough, depending only on $\bar{\rho}$ and \eqref{support}, \eqref{expBound} as required. 

To extend the convergence statement to the matrix elements of the connection $1$-form $A_{\bf s}$ we rely on our explicit formula \eqref{explicitA}. Plugging the expansion for $Y_{\bf s}(x_{\gamma_i})$ in \eqref{explicitA} one checks that each $\Gamma$-graded component of $A_{\bf s}(x_{\gamma_i})$ is given by a finite product of factors which are infinite sums over decorated, disconnected trees and are all dominated by a sum of the form \eqref{comparisonSeries} for possibly larger but fixed constants $C_1, C_2$. 
\end{proof}
\begin{remark}\label{gmnRmk} As we mentioned our proof of Theorem \ref{mainThm} is very much inspired by the work of Gaiotto, Moore and Neitzke in mathematical physics \cite{gmn}. In \cite{gmn} appendix C an integral operator is studied, and the proof of a convergence property for its iterations is sketched using functional analytic techniques. In our present situation we cannot follow this approach directly, since we wish to prove a convergence result that holds \emph{uniformly} as $Z$ approaches the boundary of the strongly generic locus $\Hom^{sg}(\Gamma, \C)$. More precisely the estimate \cite{gmn} (C.20) needed for the contraction property cannot hold uniformly as we approach the boundary $\del \Hom^{sg}(\Gamma, \C)$, since it is based on saddle point approximations such as \cite{gmn} (C.10), (C.11) which do not hold uniformly as $Z \to \del \Hom^{sg}(\Gamma, \C)$. One can get estimates similar to (C.10), (C.11) that depend on the number of vertices of the underlying diagram as in Proposition \ref{mainEstimateProp}, but this is not enough to establish \cite{gmn} (C.20). In the present paper we have replaced the integral operator with the algebraic operator $\mathcal{F}$ acting on formal power series, and proved a convergence result for its iterations for which the type of exponential decay of the functions $H_T(z, Z, \lambda)$ established in Proposition \ref{mainEstimateProp} is sufficient. Proposition \ref{mainEstimateProp} follows in turn from a combination of classical estimates on the Hilbert transform operator, combinatorial considerations, and elementary Sobolev embeddings. Recently C. Garza has informed us of his very interesting work in progress towards proving much stronger results in the functional-analytic framework of \cite{gmn}.
\end{remark}
\section{Application to field theory}\label{gmnSection}
We discuss briefly the original physical setup of \cite{gmn}. In that context one studies the low-energy effective Lagrangian of a class of $\mathcal{N} = 2$ supersymmetric gauge theories on $\R^3 \times S^1_R$ (a circle of radius $R$). This is known to be given by a supersymmetric sigma-model with values in a noncompact hyperk\"ahler fibred manifold $\M \to \B$. The generic fibre is isomorphic to $\Gamma \otimes \R/\Z$, where $\Gamma$ is the lattice of electro-magnetic charges, with a natural skew-symmetric pairing $\bra - , - \ket$. The gauge theory naturally specifies functions on the smooth locus $\mathcal{B}^o \subset \mathcal{B}$ (where the fibres are smooth), the central charge $Z\!: \B^o \to \Gamma^{\vee}\otimes \C$ (which also encodes the energy scale at which we are looking) and the locally constant BPS spectrum $\Omega\!: \B^o \to \Gamma^{\vee}\otimes\Q$. The spectrum $\Omega$ can in fact be realized as the set of Donaldson-Thomas invariants of a $3$CY category $\mathcal{C}$. This is expected from general physical principles (realizing the gauge theory as the field theory limit of a suitable string theory), and was proved mathematically for a large class of theories in \cite{sutherland, bs}.

In \cite{gmn} a set of preferred holomorphic Darboux coordinates for the target metric is found. These coordinates are expressed in terms of a local trivialization of the fibration as formal pairings $\bra \beta, \sum_{\alpha} c_{\alpha} e^{i \th_{\alpha}} \ket$ where $\beta \in \Gamma$ and $\th_{\alpha}$ denotes an angular coordinate on the fibre dual to $\alpha \in \Gamma$. The coefficients $c_{\alpha}$ are functions on $\B^o$ with values in $\Gamma$, given in turn by a sum over trees 
\begin{equation}\label{cSeries} 
c_{\alpha} = \sum_{T\,:\, \sum_v \alpha(v) = \alpha} \frac{1}{|\Aut(T)|}\, \alpha_T G_T(z, Z, R) \prod_{\{v \to w\} \subset T}\bra \alpha(v), \alpha(w)\ket \prod_{v} \dt(\alpha(v), Z)
\end{equation}
where $\dt$ and $\Omega$ are related by \eqref{mobius}. The functions $G_T(z, Z, R)$ are determined explicitly in \cite{gmn} and depend nontrivially on the radius $R$ and a twistor parameter $\zeta \in \C^*$. They are closely related to our $H_T(z, Z, \lambda)$ above. Denoting by $\mathcal{B}^{sg} \subset \mathcal{B}^o$ the locus of generic central charges in $\mathcal{B}^o$, the functions $G_T\!: \C^* \times  \mathcal{B}^{sg} \times \R_{> 0} \to \C^*$ are defined inductively by
\begin{equation}\label{GFun}
G_{T}(\zeta, Z, R) = \frac{1}{2\pi i} \int_{\ell_{\alpha_{T}}}\frac{d w}{w} \frac{w + \zeta}{w - \zeta} \exp(  - R Z(\alpha_{\T}) w^{-1} -   R \bar{Z}(\alpha_{T}) w) \prod_j G_{T_j}(w),
\end{equation}
with the initial condition $G_{\emptyset} = 1$ (recall that with the sign conventions of this paper we have $\ell_{\alpha_T} = \R_{> 0} Z(\alpha_T)$). 

In general the series \eqref{cSeries} contains infinitely many terms. This is because of the symmetry $\dt(\alpha, Z) = \dt(-\alpha, Z)$, expressing the physical fact that every BPS particle of charge $\alpha \in \Gamma$ has a CPT conjugate antiparticle of charge $-\alpha$. In \cite{gmn} no order of summation is specified a priori for  \eqref{cSeries}, so unless the series is absolutely convergent the convergence problem is ill-defined. Following  the arguments of sections \ref{estimatesSec} and \ref{FunSec} verbatim, with the new choice of integration kernel \eqref{GFun}, and in particular recalling that the proof of Theorem \ref{mainThm} gives an estimate on the series \eqref{alphaSeries} which is independent of $\alpha$, we find a corresponding result for the series \eqref{cSeries}.
\begin{corollary}\label{gmnCor} Fix $\z^* \in \C^*$ which does not lie on a distinguished ray. For large enough $R$, independent of $\alpha$, depending only on the support and exponential growth condition, the series \eqref{cSeries} for the $c_{\alpha}$ converges absolutely and uniformly. Moreover there is a common bound $|\bra \beta,  c_{\alpha} \ket| < C$, independent of $\alpha$. It follows that for large enough $R$ the formal expansion $\bra\,-\,, \sum_{\alpha} c_{\alpha} e^{i \th_{\alpha}} \ket$ actually gives a well defined distribution on the torus $\Gamma\otimes\R/\Z$ with values in $\Gamma^{\vee}$.  
\end{corollary}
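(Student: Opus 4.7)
The plan is to transport the analytic scheme of Sections \ref{estimatesSec} and \ref{FunSec} to the GMN integration kernel, and then to extract the distributional conclusion from the uniform bound on $|\bra \beta, c_\alpha\ket|$.

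The first step will be to prove an analog of Proposition \ref{mainEstimateProp} for $G_T$. Parametrizing each ray $\ell_{\alpha(v)}$ by a positive real $s_v$ via $w_v = |Z(\alpha(v))|^{-1}Z(\alpha(v)) s_v$ turns the exponential factor in \eqref{GFun} into $\exp(-R|Z(\alpha(v))|(s_v^{-1}+s_v))$, which has exactly the form of the integrand in Section \ref{estimatesSec} with $R$ playing the role of $\lambda$. The only other change is that the Joyce kernel $\frac{z}{w-z}$ is replaced by $\frac{w+\zeta}{w-\zeta} = 1 + \frac{2\zeta}{w-\zeta}$, so up to a harmless bounded constant one again has a Cauchy-type kernel, to which the Sokhotski--Plemelj decomposition of Proposition \ref{mainEstimateProp} applies verbatim. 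The combinatorial reduction (splitting edges with nearly collinear rays into principal-value and residue parts, and integrating out well-separated subtrees via Hilbert-transform estimates as in Lemma \ref{iterativeLem}) then carries over unchanged, yielding
\begin{equation*}
|G_T(\zeta^*, Z, R)| \leq C_1^{|T|} \exp\!\Bigl(-C_2 \sum_{v \in T} |Z(\alpha(v))| R\Bigr)
\end{equation*}
with universal constants $C_1, C_2, \bar R > 0$ depending only on the support condition \eqref{support} and on the fixed $\zeta^*$, for all $R \geq \bar R$.

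The second step is to invoke the functional-equation argument of Section \ref{FunSec}. After bounding $|\dt(\alpha(v), Z)| \leq \mu|\Omega|(\alpha(v), Z)$ and applying the $G_T$ estimate termwise, each constrained sum \eqref{cSeries} is dominated by the unconstrained tree sum
\begin{equation*}
\sum_{\operatorname{disconnected} T} C_1^{|T|} \, |\bra \beta, \widetilde W_T(Z)\ket| \, \exp\!\Bigl(-C_2\sum_v |Z(\alpha(v))| R\Bigr),
\end{equation*}
which is precisely the specialization ${\bf s} = (1,\ldots,1)$ of the series treated in Corollary \ref{comparison}. Choosing $\bar\rho > 1$ there and $R$ sufficiently large, depending only on \eqref{support} and \eqref{expBound}, gives convergence at ${\bf s} = (1,\ldots,1)$ with a bound depending only on $\beta$. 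Since the dominating series does not depend on $\alpha$, this simultaneously yields absolute convergence of \eqref{cSeries} and the uniform estimate $|\bra \beta, c_\alpha\ket| \leq C(\beta)$. The distributional statement is then a standard Fourier-analytic observation: any family $\{c_\alpha\}_{\alpha \in \Gamma}$ in $\Gamma \otimes \C$ with $|\bra \beta, c_\alpha\ket|$ bounded uniformly in $\alpha$ for each $\beta$ defines a $\Gamma^\vee$-valued distribution on $\Gamma \otimes \R/\Z$, since for any test function $\phi \in C^\infty(\Gamma \otimes \R/\Z)$ the Fourier coefficients $\hat\phi(\alpha)$ are of rapid decay, so for each $\beta$ the pairing $\sum_{\alpha \in \Gamma} \bra \beta, c_\alpha\ket \hat\phi(-\alpha)$ converges absolutely and depends continuously on $\phi$.

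The main obstacle will be checking that the Sokhotski--Plemelj reduction and the $L^p$-estimates on iterated Hilbert transforms of Section \ref{estimatesSec} really do transport to the modified kernel $\frac{w+\zeta}{w-\zeta}$ uniformly in the tree $T$ and up to the boundary of the strongly generic locus, as required in Remark \ref{gmnRmk}; once the tree estimate is in place the rest is a bookkeeping transcription of the proof of Theorem \ref{mainThm} combined with the elementary distribution-theoretic argument above.
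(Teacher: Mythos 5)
Your proposal is correct and follows essentially the same route as the paper, which simply invokes Sections \ref{estimatesSec} and \ref{FunSec} ``verbatim'' with the kernel change and recalls that the estimate of Theorem \ref{mainThm} is $\alpha$-independent. You spell out two details the paper leaves implicit (the decomposition $\frac{w+\zeta}{w-\zeta} = 1 + \frac{2\zeta}{w-\zeta}$, which shows the GMN kernel differs from the Joyce kernel by an additive constant absorbed in the $C_1^{|T|}$ bound, and the Fourier-analytic argument identifying a uniformly bounded family of coefficients with a distribution), and you correctly locate the uniform bound in the specialization ${\bf s}=(1,\ldots,1)$ of Corollary \ref{comparison}.
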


\end{document}